\newcommand{\bc}[1]{{\color{blue} #1}}
\newcommand{\bcred}[1]{{\color{orange} #1}}
\newtheorem{theorem}{Theorem}
\newtheorem{condition}{Condition}
\newtheorem{lemma}[theorem]{Lemma}
\newtheorem{proposition}[theorem]{Proposition}
\newtheorem{remark}{Remark}
\newtheorem{assumption}{Assumption}
\def\sa#1{\textcolor{red}{#1}}
\def\fprod#1{\left\langle#1\right\rangle}
\newcommand{\vertiii}[1]{{\left\vert\kern-0.25ex\left\vert\kern-0.25ex\left\vert #1 
    \right\vert\kern-0.25ex\right\vert\kern-0.25ex\right\vert}}
\newcommand{\mRx}{\mathbb{R}^{d_x}}
\newcommand{\mRy}{\mathbb{R}^{d_y}}
\newcommand{\tPhi}{\tilde{\Phi}}
\newcommand{\tn}{\tilde{\nabla}}
\newcommand{\mcR}{\mathcal{R}}
\newcommand{\xitdt}{\xi^{(\tau,\sigma,\theta)}}
\newcommand{\ptdt}{\pi^{(\tau,\sigma,\theta)}}
\newcommand{\mRd}{\mathbb{R}^d}
\newcommand{\mcRtdt}{\mathcal{R}^{(\tau,\sigma,\theta)}}
\newcommand{\mE}{\mathbb{E}_{\pi^{(\theta)}_*}}
\newcommand{\hes}{\nabla^{(2)}f_*}
\newcommand{\gV}{\cV^{(\tau,\sigma,\theta)}_\alpha}
\def\grad{\nabla}
\def\cB{\mathcal{B}}
\def\cC{\mathcal{C}}
\def\cD{\mathcal{D}}
\def\cO{\mathcal{O}}
\def\cR{\mathcal{R}}
\def\cV{\mathcal{V}}
\def\mE{\mathbb{E}}
\def\smskip{\smallskip}
\def\texitem#1{\par\smskip\noindent\hangindent 25pt
               \hbox to 25pt {\hss #1 ~}\ignorespaces}
\def\norm#1{\|#1\|}
\newcommand{\BEAS}{\begin{eqnarray*}}
\newcommand{\EEAS}{\end{eqnarray*}}
\newcommand{\BEA}{\begin{eqnarray}}
\newcommand{\EEA}{\end{eqnarray}}
\newcommand{\BEQ}{\begin{eqnarray}}
\newcommand{\EEQ}{\end{eqnarray}}
\newcommand{\BIT}{\begin{itemize}}
\newcommand{\EIT}{\end{itemize}}
\newcommand{\BNUM}{\begin{enumerate}}
\newcommand{\ENUM}{\end{enumerate}}
\newcommand{\BA}{\begin{array}}
\newcommand{\EA}{\end{array}}
\newcommand{\reals}{\mathbb{R}}
\newcommand{\Rank}{\mathop{\bf rank}}
\newif\ifpagenumbering
\newsavebox{\theorembox}
\newsavebox{\lemmabox}
\newsavebox{\defnbox}
\newsavebox{\assbox}
\savebox{\theorembox}{\noindent\bf Theorem}
\savebox{\lemmabox}{\noindent\bf Lemma}
\savebox{\defnbox}{\noindent Definition}
\definecolor{plum}{rgb}{0.3,0,0.7}
\newcommand{\mg}[1]{{\color{plum}{#1}}}
\renewcommand{\mg}[1]{{\color{black}{#1}}}
\renewcommand{\sa}[1]{{\color{black}{#1}}}
\renewcommand{\bc}[1]{{\color{black}{#1}}}
\renewcommand{\bcred}[1]{{\color{black}{#1}}}
\title{A Variance-Reduced Stochastic Accelerated Primal Dual Algorithm}
\author{Bugra Can\\
Department of Management Sciences and Information Systems\\
	Rutgers Business School \\
	Piscataway, NJ, 08854.\\
	\texttt{bc600@scarletmail.rutgers.edu}
	\And
	Mert G\"urb\"uzbalaban \\
	Department of Management Sciences and Information Systems\\
	Rutgers Business School\\
	Piscataway, NJ, 08854 \\
	\texttt{mert.gurbuzbalaban@rutgers.edu}
	\And
	Necdet Serhat Aybat\\
	Industrial and Manufacturing Engineering Department\\
	Penn State University\\
	University Park, PA, 16802-4400,\\
	\texttt{nsa10@psu.edu}
}
\begin{document}
\maketitle

\begin{abstract}
In this work, we consider strongly convex strongly concave (SCSC) 
\sa{saddle point~(SP)} problems $\min_{x\in\mathbb{R}^{\bc{d_x}}} \max_{y\in\mathbb{R}^{\bc{d_y}}} f(x,y)$ where $f$ is \sa{$L$-smooth, $f(\cdot,y)$ is $\mu$-strongly convex for every $y$, and $f(x,\cdot)$ is $\mu$-strongly concave for every $x$}. Such problems arise frequently in machine learning in the 
\sa{context of robust 
empirical risk minimization~(ERM), e.g., \emph{distributionally robust} ERM, where partial gradients 
are} estimated 
\sa{using} mini-batches of data points. 
\sa{Assuming we have access to an unbiased stochastic first-order oracle, 
we consider the 
stochastic accelerated primal dual (SAPD) 
algorithm 
 recently introduced in \cite{zhang2021robust} for SCSC SP problems as a robust method 
against the} gradient noise. In particular, SAPD recovers the well-known stochastic gradient descent ascent (SGDA) as a special case when the momentum parameter is set to zero and can achieve an accelerated rate when the momentum parameter is properly tuned, \sa{i.e., improving the $\kappa\triangleq L/\mu$ dependence from $\kappa^2$ for SGDA to $\kappa$.} We propose efficient variance-reduction strategies for SAPD based on Richardson-Romberg extrapolation and show that our method improves upon SAPD both in practice and in theory.
\end{abstract}

\keywords{Saddle point algorithms \and Variance reduction methods  }

\section{Introduction}\label{sec: introduction}
\mg{Saddle point (SP) problems arise frequently in many key settings in machine learning. Examples include but are not limited to robust training of machine learning models \cite{gurbuzbalaban2020stochastic,duchi2021learning}, training Generative Adversarial Networks (GANs) \cite{pmlr-v70-arjovsky17a}, design of fair classifiers \cite{nouiehed2019solving}, and empirical risk minimization~(ERM) problems such as regression and classification \cite{palaniappan2016stochastic}. \sa{Furthermore, ERM problems} with constraints on the model parameters result in constrained stochastic optimization problems which can also be cast into saddle-point problems 
\sa{using} Lagrangian duality.} 

\mg{Motivated by such applications, we consider the strongly convex/strongly concave~\sa{(SCSC)} saddle-point~\sa{(SP)} problem
\begin{equation}\label{def: sapd-opt-prob}
    \min_{x\in \mRx}\max_{y\in\mRy} \; f(x,y),
\end{equation}
where $f:\mathbb{R}^{d_x}\times \mathbb{R}^{d_y} \to \mathbb{R}$ is smooth and strongly convex 
\sa{in} $x$ and strongly \sa{concave} 
\sa{in} $y$. As we discuss in the numerical experiments section, our algorithms and results are also directly applicable to the convex/concave setting by adding an appropriate quadratic regularizer. \sa{In a different setting, SCSC problems of the form given in \eqref{def: sapd-opt-prob} arise as subproblems when solving weakly convex-weakly concave SP problems using inexact proximal point method, e.g., see~\cite{liu2021first}}.

\sa{We consider \eqref{def: sapd-opt-prob} assuming we only have} access to unbiased stochastic estimates of the true gradients $\nabla_x f(x,y)$ and $\nabla_y f(x,y)$; as often in machine learning applications gradients are not exactly computed but \sa{are} estimated from randomly sampled subsets (mini-batches) of data.} This is also the case in privacy-preserving empirical risk minimization where noise is added to the gradients to preserve privacy of the user data \cite{chaudhuri2011differentially}. \mg{For solving these SP problems, stochastic first-order (SFO) methods that rely on stochastic gradient information have been very popular in practice due to their favorable scalability properties\sa{; but,} they come with a number of challenges. In particular, even though the problem \eqref{def: sapd-opt-prob} is well-studied when the gradients are deterministic, many aspects of the stochastic setting with inexact gradient information are relatively understudied.
In fact, accelerated SFO methods \sa{achieving} acceleration beyond bilinear problems are quiet recent in the literature, e.g., \cite{accSFO_SP_1, fallah_2020, zhang2021robust, VarRedSGLD,VarRedSpiderBoost,thekumparampil2019efficient,chen2017accelerated,hsieh2019convergence} and the references therein. To that end, the \sa{recently proposed} stochastic accelerated primal-dual \sa{(SAPD) method~\cite{zhang2021robust}, 
\sa{based on momentum-averaging,} 
can achieve the best iteration complexity bound among single-loop methods for SCSC problems of the form: $\min_x\max_yg(x)+f(x,y)-h(y)$, where $f$ is a smooth SCSC function admitting an SFO oracle as we assume for the SP problem in~\eqref{def: sapd-opt-prob}, and $g,h$ are closed, strongly convex functions with efficient proximal maps; furthermore, SAPD achieves the optimal complexity for bilinear SP problems with $f(x,y)=\fprod{Ax,y}$ for some $A\in\reals^{d_y\times d_x}$.} In particular, SAPD, \sa{which is an extension of the APD~\cite{hamedani2021primal} algorithm from deterministic to stochastic first-order oracle setting,} recovers the well-known stochastic gradient descent ascent (SGDA) as a special case when the momentum parameter is set to zero and can achieve an accelerated rate when the momentum parameter is properly tuned, \sa{i.e., improving the $\kappa\triangleq L/\mu$ dependence from $\kappa^2$ for SGDA to $\kappa$.}} 
 

Due to persistent nature of the noise on the gradients, the performance of the stochastic SP methods \bcred{differs} from their deterministic counterparts and \bcred{depends} heavily on 
\mg{the statistical properties of \sa{the limiting point/distribution generated as a solution} to the problem in \eqref{def: sapd-opt-prob}. In particular, addition of a momentum-averaging step has a typical affect to increase the stationary variance of the iterates in the context of stochastic momentum methods such as SAPD \cite{zhang2021robust} and in this context developing efficient variance-reduction strategies becomes a key for achieving better practical performance.}

\mg{
\textbf{Contributions.} In this paper, we consider the SAPD algorithm with constant (primal and dual) stepsize for solving \sa{SCSC} SP problems of the form 
\sa{\eqref{def: sapd-opt-prob}}. Employing constant stepsize has at least two major benefits: (i) only one value needs to be tuned as opposed to \sa{$a,b$ and $c$ parameters needed to be tuned for decaying step size sequence of the form $a / (b+ck)$ for $k\geq 0$;} (ii) the ``bias term"\bcred{, which characterizes how fast \sa{the effects of initial conditions on} the iterates are forgotten,} decays exponentially (see Proposition \ref{prop: conv-of-SAPD}). 

Our contributions are as follows:
\begin{itemize}
    \item In Proposition \ref{prop: conv-of-SAPD}, we consider a parametrization of \sa{the primal stepsize $\tau$ and dual stepsize $\sigma$} in terms of the momentum parameter $\theta$ and obtain convergence guarantees for SAPD for that particular choice of parameters. \sa{For this specific parametrization of $\tau,\sigma$ in $\theta$, working directly with the expected distance square~(EDS) metric, rather than the more stronger expected gap metric as in~\cite{zhang2021robust}, we were able to provide 
    a sharper result for the EDS metric in terms of constants,} compared to those available for SAPD in 
    \cite{zhang2021robust}. \sa{Under this parametrization,} the corresponding linear convergence rate for the bias term is equal to the \sa{momentum} parameter $\theta$ \sa{as also shown in~\cite{zhang2021robust}}. For ill-conditioned problems we expect \sa{a} slower convergence \sa{for the bias as selecting small $\tau,\sigma>0$ requires $\theta \in (0,1)$ chosen close to $1$.}
    \item Under some assumptions on the gradient noise structure, we show that SAPD iterates admit an invariant distribution (Theorem \ref{thm: inv-meas-exists}). Since SAPD iterates result in a Markov chain that is \emph{non-reversible}, standard tools for reversible Markov Chains (such as those arising in the study of stochastic gradient descent methods \cite{bridging-the-gap}) are not directly applicable. We achieve this result by showing that so-called ``minorization and drift conditions" hold for the Markov chain corresponding to the SAPD iterates.
    \item In Lemma \ref{lem: finite-moment}, we characterize the second and fourth moments of the stationary distribution in terms of its dependency to the momentum parameter $\theta$. Building on this lemma, in Theorem~\ref{thm: expansion-at-inv-meas-mean}, we show that the expected iterates contain a bias that is proportional to $\mathcal{O}(1-\theta)$ as $\theta \to 1$. Motivated by this result, \sa{based on Richardson-Romberg extrapolation type arguments,} we propose a variance-reduction scheme: \sa{running SAPD 
    twice independently with different parameters $\theta_1$ and $\theta_2$, one can remove the bias term proportional to $\cO(1-\theta)$ by considering a linear combination of the two iterate sequences corresponding to two appropriately chosen parameters $\theta_1$ and $\theta_2$}. We call this method variance-reduced SAPD (VR-SAPD). Although it is known that Richardson extrapolation can be very efficient for optimization algorithms such as stochastic gradient descent \cite{bridging-the-gap} and Frank-Wolfe methods \cite{bach-richardson}, to our knowledge, our paper provides the first application of this technique to the accelerated stochastic \sa{primal-dual algorithms for saddle-point problems}. There are also variance-reduction techniques for finite-sum problems where the objective is in the form of an average of finitely many component functions, e.g., \cite{palaniappan2016stochastic,chavdarova2019reducing,alacaoglu2021stochastic}; however, these techniques are not available to our setting as our objective is more general and we do not assume that it has a finite-sum structure.
    \item In Theorem \ref{thm: av-seq-conv}, we characterize the gap between \sa{the expected} ergodic average of SAPD iterates and \sa{the expectation of SAPD iterates in the limit} as the number of iterations $k$ grows. 
    \sa{We show} that this gap is of the order $\mathcal{O}(1/k)$ and is vanishing asymptotically; therefore, it implies that 
    \sa{the same variance-reduction mechanism} can be applied to the ergodic averages of the SAPD iterates as well.
    \item We showcase the efficiency of our proposed algorithm VR-SAPD on distributionally robust logistic regression~\sa{(DRLR)} problems on three datasets where we compare VR-SAPD with SAPD \cite{zhang2021robust}, S-OGDA \cite{fallah_2020}, and Stochastic Mirror Prox (SMP) \cite{SMP} algorithms. 
    \sa{For the DRLR problem, we adopted $f_2$-divergence to define the uncertainty set around the uniform distribution; therefore, the problem domain is the Euclidean space, and in this case} SMP reduces to the stochastic gradient descent ascent (SGDA) method which is commonly used in machine learning practice, e.g., 
    for training GANs \cite{kontonis2020convergence}. Our results show that VR-SAPD reduces the variance of the SAPD iterates considerably and \bcred{results} in a significant performance improvement.
\end{itemize}
}

\textbf{Notations.} 
\sa{Let $\mathbb{R}_+$ is the set of positive real numbers.} For any vector $v \in \mathbb{R}^d$, $\Vert x \Vert$ \sa{denotes the Euclidean norm, 
and for any matrix $M\in\mathbb{R}^{d_x\times d_y}$, $\Vert M \Vert_2$ denotes the spectral norm, 
i.e., the largest singular value of $M$.}
Let $E$ and $F$ be two real vector spaces. \mg{We use $E\otimes F$ to denote the tensor product of $E$ and $F$}. \mg{The tensor product of two vectors $x\in E$ and $y\in F$ is denoted as \sa{$E\otimes F \ni x\otimes y =xy^\top$}}.  Similarly, $E^{\otimes k}$ is the k-th tensor power of $E$ and $x^{\otimes k}\in E^{\otimes k}$ is the  k-th tensor product of $x\in E$. \bcred{We use $\mathbb{N}$ for the set of non-negative integers, and $\mathbb{N}_+\triangleq\mathbb{N}\setminus \{0\}$.}
Let $n\in \mathbb{N}_{+}$, then $\bcred{\mathcal{C}^{n}(\mathbb{R}^d,\mathbb{R}^{m})}$ is the set of n-times continuously differentiable functions from $\mathbb{R}^{d} \rightarrow \mathbb{R}^{m}$. Let $f \in \mathcal{C}^{n}(\mathbb{R}^{d},\mathbb{R}^{m})$, \bcred{$D^{n}f$} is
the $n$-th differential of $f$. \bcred{If $f\in\mathcal{C}^{n}(\mathbb{R}^{d},\mathbb{R})$, then  $\bcred{\nabla}^{(n)}f(z)$ denotes the tensor of order $n$ at the point $z\in\mathbb{R}^{d}$ \mg{arising in the Taylor expansion of $f$}.} For any vector $z\in\mathbb{R}^{d}$ and any matrix $M\in\mathbb{R}^{d\times d}$, we define $[\bcred{\nabla}^{(3)}f(\bcred{z})M]_{\ell}=\sum_{i=1}^{d}\sum_{j=1}^dM_{i,j} \frac{\partial^{3}f}{\partial \bcred{z}_{i}\partial \bcred{z}_j\partial \bcred{z}_\ell}(\bcred{z})$.
\bcred{For any matrices $M, N\in \mathbb{R}^{d\times d}$ $M\otimes N$ is defined as the endomorphism of $\mathbb{R}^{d\times d}$ such that $M\otimes N: P \rightarrow \bcred{MPN^\top}$.}
Lastly, $(x_*,y_*)$ is the \sa{unique} solution of the \sa{SP} problem \sa{in} \eqref{def: sapd-opt-prob} and 
$\nabla^{(2)}f_*$ and $\nabla^{(3)}f_*$ are used for $\nabla^{(2)}f(x_*,y_*)$ and $\nabla^{(3)}f(x_*,y_*)$, respectively, for simplicity.
\section{Preliminaries and Background}
\mg{\paragraph{SAPD Algorithm.} Recently, 
Zhang \emph{et al.} \cite{zhang2021robust} have proposed the (SAPD) algorithm} to solve \sa{SCSC SP problems of the form} \eqref{def: sapd-opt-prob}. \sa{SAPD updates are given as follows:} 
\begin{subequations}\label{alg: sapd}
\begin{align}
\tilde{q}_k&= (1+\theta)\tn_y f(x_k,y_k)-\theta \tn_y f(x_{k-1},y_{k-1}),\\
 \sa{y_{k+1}}
&= y_k+\sigma \tilde{q}_k,\\
x_{k+1}&= x_k-\tau \tn_xf(x_k, \sa{y_{k+1}}). 
\end{align}
\end{subequations}
Basically, SAPD admits three \sa{positive} parameters $(\tau,\sigma,\theta)$ where $\tau$ is the primal stepsize, $\sigma$ is the dual stepsize, and $\theta$ is the momentum parameter. 
In our work, we are particularly interested in \sa{designing a method for the SP problem in \eqref{def: sapd-opt-prob}}
 that would improve the variance of \sa{the SAPD iterate sequence $\{(x_k,y_k)\}_{k\geq 0}$. We call our method \emph{Variance Reduced SAPD}~\mg{(VR-SAPD)}.} \mg{For analysis purposes, we 
 \sa{first state} our assumptions on $f$ and \sa{$\tilde\grad f$}.}
\begin{assumption}\label{assump: obj-fun-prop} \sa{We assume $f\in\cC^4$, i.e., four times continuously differentiable} \bc{with uniformly bounded \sa{$3$-rd and $4$-th order} derivatives}. For any $\hat{y}\in \mRy$, \sa{$f(\cdot,\hat{y})$} is \sa{$L_{xx}$-smooth} and 
$\mu_x$-strongly convex for some \sa{$L_{xx},\mu_x>0$}. 
Furthermore, for any $\hat{x}\in\mRx$, 
\sa{$f(\hat{x},\cdot)$} is 
$\mu_y$-strongly concave, \sa{and there exist constants $L_{yx},L_{yy}> 0$ such that
\begin{align*}
&\Vert \nabla_y f(x,y) - \nabla f_y(\hat{x},\hat{y})\Vert \leq L_{yx}\Vert x-\hat{x}\Vert + L_{yy}\Vert y-\hat{y}\Vert,
\end{align*}
for all $x,\hat{x} \in \mRx$ and $y,\hat{y}\in \mRy$.}
\end{assumption}
\sa{We also make the following standard assumption on $\tilde\grad f$, which says that we 
have access to unbiased stochastic estimates of the gradients.}

\begin{assumption}\label{assump: noise-properties} 
\sa{Let $\{x_k,y_k\}$ be the SAPD iterate sequence. For all $k\geq 0$,
we have access to $\tn_y f (x_k,y_k,\sa{\zeta_k^y})$ and $\tn_x f(x_k,y_{k+1},\sa{\zeta_k^x})$ which are unbiased estimates of $\nabla_y f(x_k,y_k)$ and $\nabla_x f(x_k,y_{k+1})$, respectively;\footnote{\sa{$\zeta_k^y$ and $\zeta_k^x$ are determining the noise structure, and for simplicity of the notation we suppress them and use $\tilde\grad_y f(x_k,y_k)$ and $\tilde\grad_x f(x_k,y_{k+1})$ instead.}} i.e.,
\begin{align*}
    &\mathbb{E}[\tn_y f(x_k,y_k,\zeta_k^y)| x_k,y_k]=\nabla_y f(x_k,y_k),\\
    &\mathbb{E}[\tn_x f(x_k,y_{k+1},\zeta_k^x) | x_k,y_{k+1}]=\nabla_x f(x_k,y_{k+1}),
\end{align*}
such that
the $\{\zeta_k^x\}_k$ and $\{\zeta_k^y\}$ are independent sequences and also independent from each other. Furthermore, for $p\in\{2,3,4\}$, 
there exists $\delta_{(p)}>0$ such that
{\small
\begin{subequations}
\label{ineq: noise-var}
\begin{align}
&\mathbb{E}[\Vert \tn_y f(x_k,y_k,\zeta^y_k)-\nabla\bcred{y} f(x_k,y_k)\Vert^{p}] \leq \sa{\delta_{(p)}^p},\\
&\mathbb{E}[\Vert \tn_x f(x_k,y_{k+1},\zeta^x_k)-\nabla_{\bcred{x}} f(x_k,y_{k+1})\Vert^{p}] \leq \sa{\delta_{(p)}^p}.
\end{align}
\end{subequations}}%
}%
\bc{\sa{We also assume that both noise, i.e.,} $\tn_xf-\nabla_xf$ and $\tn_yf-\nabla_yf$, are stationary, and \bcred{independent from the \mg{past}.}} 
\end{assumption}
\mg{We note that $f$ being twice continuously differentiable and existence of second moments for the gradient noise would be sufficient to obtain convergence guarantees \sa{in $L^2$ for the accumulation of SAPD sequence $\{x_k,y_k\}$ in the vicinity of the unique saddle point--see Theorem~\ref{thm:complexity} below}; however, \sa{in this paper}, we require further smoothness up to fourth order to be able to obtain stronger convergence guarantees for the fourth moments which is then used in our variance reduction mechanism (see~Lemma~\ref{lem: finite-moment} and Theorem \ref{thm: expansion-at-inv-meas-mean}).}

\mg{The next result, which is a direct consequence of Theorems 2.6 and 2.9 in~\cite{zhang2021robust}, summarizes the known iteration complexity results for SAPD with \sa{constant primal and 
dual stepsizes.}}
\begin{theorem}[\cite{zhang2021robust}]
\label{thm:complexity}
\sa{Consider the problem in~\eqref{def: sapd-opt-prob}. Under Assumptions~\ref{assump: obj-fun-prop} and~\ref{assump: noise-properties} for $p=2$, given any $\epsilon>0$, there exists $\hat{\theta}_\epsilon\in(0,1)$ such that the SAPD iterate sequence $\{x_k,y_k\}$ generated using
 $\tau=\frac{1-\theta}{\mu_x\theta}$ and $\sigma=\frac{1-\theta}{\mu_y\theta}$
for any $\theta\in \sa{[}\hat{\theta}_\epsilon, 1)$ satisfies 
{\small
$$\mathbb{E}\left[\max\{\sup_{x,y}\{f(\bar{x}_N,y)-f(x,\bar{y}_N)\},~\norm{z_N-z^*}^2\}\right]\leq \epsilon,$$} 
for all $N\geq N_\epsilon\in\bcred{\mathbb{N}_+}$ such that
{\scriptsize
\begin{align*}
    N_\epsilon=\tilde\cO\left(\Big(\frac{L_{xx}}{\mu_x}+\frac{L_{yx}}{\sqrt{\mu_x\mu_y}}+\frac{L_{yy}}{\mu_y}\Big)\log\Big(\frac{1}{\epsilon}\Big)+\Big(\frac{1}{\mu_x}+\frac{1}{\mu_y}\Big)\frac{\delta_{(2)}^2}{\epsilon}\right).
\end{align*}}}%
\end{theorem}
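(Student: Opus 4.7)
The plan is to specialize Theorems~2.6 and~2.9 of~\cite{zhang2021robust} to the specific stepsize parametrization $\tau=(1-\theta)/(\mu_x\theta)$, $\sigma=(1-\theta)/(\mu_y\theta)$. Those cited results produce, under a ``feasibility'' matrix inequality on the triple $(\tau,\sigma,\theta)$, a Lyapunov function $\mathcal{V}_k$ combining the primal--dual gap at the ergodic average and a weighted squared distance to the unique saddle point $z^*=(x_*,y_*)$, together with a one-step recursion of the schematic form
\[
\mathbb{E}[\mathcal{V}_{k+1}] \leq \theta\,\mathbb{E}[\mathcal{V}_k] + C\,(\tau+\sigma)\,\delta_{(2)}^2,
\]
where $C$ is a universal constant, valid under Assumptions~\ref{assump: obj-fun-prop} and~\ref{assump: noise-properties} with $p=2$.

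First, I would verify that the proposed closed-form parameter choice is admissible: substituting $\tau\mu_x\theta=\sigma\mu_y\theta=1-\theta$ into the SAPD feasibility inequality of~\cite{zhang2021robust} reduces it to a polynomial condition in $(1-\theta)$ controlled by the ratios $L_{xx}/\mu_x$, $L_{yy}/\mu_y$, and $L_{yx}/\sqrt{\mu_x\mu_y}$. Because this polynomial is dominated by its $(1-\theta)$ term for $\theta$ near one, the inequality is satisfied whenever $1-\theta$ is at most a constant multiple of $(L_{xx}/\mu_x+L_{yx}/\sqrt{\mu_x\mu_y}+L_{yy}/\mu_y)^{-1}$. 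This constraint is the deterministic piece of the threshold $\hat\theta_\epsilon$.

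Second, I would iterate the one-step contraction to obtain
\[
\mathbb{E}[\mathcal{V}_N] \leq \theta^N \mathcal{V}_0 + \frac{C(\tau+\sigma)\delta_{(2)}^2}{1-\theta} = \theta^N \mathcal{V}_0 + \frac{C\,\delta_{(2)}^2}{\theta}\Bigl(\tfrac{1}{\mu_x}+\tfrac{1}{\mu_y}\Bigr),
\]
where the last equality uses the parametrization. Recovering the two quantities appearing in the statement---the expected gap of the ergodic average and $\mathbb{E}\|z_N-z^*\|^2$---from $\mathcal{V}_N$ costs a multiplicative factor of order $\max(\tau,\sigma)\asymp (1-\theta)\max(1/\mu_x,1/\mu_y)$ on the variance floor, so the effective error bound on the max in the theorem is controlled by a bias term decaying as $\theta^N$ plus a variance floor of order $(1-\theta)\,\delta_{(2)}^2\,\bigl(1/\mu_x+1/\mu_y\bigr)\max(1/\mu_x,1/\mu_y)$.

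Third, I would balance bias against variance: requiring the variance floor to be at most $\epsilon/2$ forces $1-\theta = \mathcal{O}(\mu_{\min}\epsilon/\delta_{(2)}^2)$, while feasibility forces $1-\theta = \mathcal{O}\bigl((L_{xx}/\mu_x+L_{yx}/\sqrt{\mu_x\mu_y}+L_{yy}/\mu_y)^{-1}\bigr)$; taking $\hat\theta_\epsilon$ to be one minus the minimum of these two upper bounds and then choosing any $\theta\in[\hat\theta_\epsilon,1)$ renders both contributions at most $\epsilon/2$. Solving $\theta^N\mathcal{V}_0\leq \epsilon/2$ with $\log(1/\theta)\asymp 1-\theta$ then yields the additive complexity $N_\epsilon=\tilde{\mathcal{O}}\bigl((L_{xx}/\mu_x+L_{yx}/\sqrt{\mu_x\mu_y}+L_{yy}/\mu_y)\log(1/\epsilon)+(1/\mu_x+1/\mu_y)\delta_{(2)}^2/\epsilon\bigr)$ stated in the theorem. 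The only non-routine step is the feasibility verification of the first paragraph; once that is in hand, the remainder is elementary unrolling and one-dimensional optimization over $\theta$.
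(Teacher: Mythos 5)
The paper does not actually prove this statement: it is imported verbatim as ``a direct consequence of Theorems 2.6 and 2.9 in \cite{zhang2021robust}'', so the only in-paper machinery to compare against is the related Proposition~\ref{prop: conv-of-SAPD} and its appendix proof (Lemma~\ref{lem:MI-cond} plus the adaptation of Theorem~2.1 of \cite{zhang2021robust}). Your skeleton --- verify admissibility of $(\tau_\theta,\sigma_\theta,\theta)$, unroll a one-step contraction $\mathbb{E}[\mathcal{V}_{k+1}]\leq\theta\,\mathbb{E}[\mathcal{V}_k]+C(\tau+\sigma)\delta_{(2)}^2$, then balance the $\theta^N$ bias against the $(1-\theta)$-proportional variance floor --- is exactly the route the paper takes for that proposition, and the bookkeeping in your second and third steps is fine at the level of $\tilde{\mathcal{O}}$.

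The genuine gap is in your first step, and it sits precisely on the theorem's main content. You assert that the feasibility matrix inequality holds whenever $1-\theta$ is at most a constant multiple of $\bigl(L_{xx}/\mu_x+L_{yx}/\sqrt{\mu_x\mu_y}+L_{yy}/\mu_y\bigr)^{-1}$, justified only by ``the polynomial is dominated by its $(1-\theta)$ term.'' That is not routine: the straightforward verification of \eqref{cond: MI-2} for this parametrization --- the one the paper itself carries out in Lemma~\ref{lem:MI-cond} with $\alpha=(1-\theta)/(2\sigma)$ and the block splitting $G_1+G_2$ --- produces the condition $\tfrac{\alpha}{2\theta}(\tfrac{1}{\tau}-L_{xx})\geq L_{yx}^2$, which forces $1-\theta=\mathcal{O}\bigl((\kappa_x+\kappa_{yx}^2)^{-1}\bigr)$, and an analogous condition forcing $1-\theta=\mathcal{O}(\kappa_y^{-2})$; this is visible in the explicit thresholds $\hat\theta_1,\hat\theta_2$ of Proposition~\ref{prop: conv-of-SAPD}. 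Plugged into $N\asymp\log(1/\epsilon)/(1-\theta)$, that yields a bias complexity of order $\kappa_x+\kappa_{yx}^2+\kappa_y^2$, i.e., the unaccelerated $\kappa^2$ dependence, not the $\kappa$-linear term claimed in the statement. Obtaining the linear dependence on $\kappa_{yx}$ and $\kappa_y$ requires the finer certificate constructed in Theorems~2.6 and~2.9 of \cite{zhang2021robust} (a different choice of the free parameter in the matrix inequality, not the crude splitting above), and your sketch neither supplies it nor signals that the naive verification falls short. As written, carrying out your plan would prove a weaker theorem with $\kappa$ replaced by $\kappa^2$ in the logarithmic term.
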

\sa{Here $\tilde\cO(\cdot)$ notation hides $\log(1/\epsilon)$ factor for the variance term, which can be removed adopting a restart approach proposed in~\cite{fallah_2020}. Let $$\kappa_x\triangleq L_{xx}/\mu_x, \quad \kappa_y\triangleq L_{yy}/\mu_y, \quad \kappa_{yx} \triangleq L_{yx}/\sqrt{\mu_x\mu_y},$$ and define $\kappa\triangleq \max\{\kappa_x,\kappa_y,\kappa_{yx}\}$. In the above result, the first part $\cO(\kappa\log{1/\epsilon})$ is related to the bias term; it is essential to emphasize that the bias term for SGDA using constant step size is $\cO(\kappa^2\log{1/\epsilon})$. Thus, SAPD 
accelerates the convergence of the bias term compared to the popular alternative SGDA.}

\paragraph{Dynamical system representation of SAPD.} 
\sa{Through defining the following concatenations,}
\begin{align}
z_k\triangleq\begin{bmatrix} 
x_k\\
y_k
\end{bmatrix},\;
\xi_k\triangleq \begin{bmatrix} 
z_k\\
z_{k-1}\\
\end{bmatrix}, 
\tPhi_k\triangleq\begin{bmatrix} \tn_x f(x_{k}, 
\sa{y_{k+1}})\\
\tn_y f(x_k,y_k)\\
\tn_y f(x_{k-1},y_{k-1})
\end{bmatrix},
\end{align}
\sa{the SAPD recursion in \eqref{alg: sapd}} can be rewritten as 
\begin{equation}\label{alg: sapd-dyn-sys}
\xi_{k+1}= M \xi_k + N \tPhi_k,
\end{equation}
with the associated matrices 
{\small
\begin{align*}
M=\begin{bmatrix} 
I_d & 0_d \\
I_d & 0_d
\end{bmatrix},\  
N= \begin{bmatrix}
-\tau I_{d_x} & 0_{d_x\times d_y} & 0_{d_x\times d_y}\\
0_{d_y\times d_x} &  \sigma(1+\theta)I_{d_y} & -\theta \sigma I_{d_y}\\ 
0_{d_x\times d_x} &  0_{d_x\times d_y} & 0_{d_x\times d_y}\\
0_{d_y} & 0_{d_y} & 0_{d_y} 
\end{bmatrix},
\end{align*}}%
with $d\triangleq d_x+d_y$. In some places, we will also use the notation $\xitdt_k$ interchangeably with $\xi_k$ to emphasize the iterates' dependency on the parameters of the algorithm. 

\paragraph{SAPD iterates as a Markov Chain.} We notice that under Assumption~\ref{assump: noise-properties}, the iterates $\{\xitdt_k\}_{k\in\mathbb{N}}$ define a \sa{time-homogeneous} Markov chain. Let $\mcRtdt$ be the Markov kernel of iterates $\{ \xitdt_k\}_{k\in\mathbb{N}}$ on $(\mathbb{R}^{2d},\mathcal{B}(\mathbb{R}^{2d}))$, \mg{where $\mathcal{B}(\mathbb{R}^{2d})$ is the sigma-algebra generated by the Borel sets in $\mathbb{R}^{2d}$}, i.e., 
\sa{for all $A\in\mathcal{B}(\mathbb{R}^{2d})$ and $k\geq 0$,} 
$$\mcRtdt(\xi_k,A)=\mathbb{P}\{ \xi_{k+1}\in A | \xi_k \},\quad\forall \xi_k\in\mathbb{R}^{2d},$$ 
almost surely, \mg{the map} \mg{$\xi\mapsto \mcRtdt(\xi,A)$} is Borel measurable, and $\mcRtdt(\xi_0, \cdot)$ is a probability measure on $(\mathbb{R}^{2d},\mathcal{B}(\mathbb{R}^{2d}))$ \sa{for any $\xi_0\in\mathbb{R}^{2d}$}. \sa{Then, for all $k\geq 1$, the Markov kernel associated with} $\mcRtdt_k$ of $\xi_k$ is recursively defined for any $\xi_0\in\mathbb{R}^{2d}$ and $A\in \mathcal{B}(\mathbb{R}^{2d})$ as 
$$
\mcRtdt_{k+1}(\xi_0, A) = \int_{\mathbb{R}^{\sa{2}d}}\mcRtdt_{k}(\xi_0, d\xi)\mcRtdt(\xi,A),
$$
where $\mcRtdt_1=\mcRtdt$. We also define 
$\lambda \mcRtdt_k$ for any probability measure $\lambda$ on $(\mathbb{R}^{2d}, \mathcal{B}(\mathbb{R}^{2d}))$ by 
$$
\lambda \mcRtdt_k(A) \triangleq\int_{\mathbb{R}^{\sa{2}d}}\lambda(d\xi)\mcRtdt_k(\xi,A),\  \forall~A \in \mathcal{B}(\mathbb{R}^{2d}).
$$
The above definition implies that for \mg{any} probability measure $\lambda$ on $\mathcal{B}(\mathbb{R}^{2d})$ and $k\in \mathbb{N}_{+}$, $\lambda \mcRtdt_k$  is the distribution of $\xitdt_k$ 
\sa{initialized} from \sa{$\xi_0\sim\lambda$, i.e., $\xi_0$ 
drawn from $\lambda$}. For any function $\Psi: \mathbb{R}^{2d}\rightarrow \mathbb{R}$ and $k\in\mathbb{N}_{+}$, the measurable function $\mcRtdt_k\Psi:\mathbb{R}^{2d}\rightarrow \mathbb{R}$
is defined as 
$$
\mcRtdt_k\Psi (\xi_0)\triangleq\int_{\mathbb{R}^{\sa{2d}}}\Psi(\xi)\mcRtdt_k(\xi_0,d\xi),\quad\sa{\forall~\xi_0\in\mathbb{R}^{2d}}.
$$
For any measure $\lambda$ on $(\mathbb{R}^{2d},\mathcal{B}(\mathbb{R}^{2d}))$ and any measurable function $h:\mathbb{R}^{2d}\rightarrow \mathbb{R}$, we denote $\int_{\mathbb{R}^{\sa{2d}}}h(\xi)d\lambda(\xi)$ by $\lambda(h)$ (whenever exists) for simplicity. Notice that this notation implies the equality $\lambda(\mcRtdt_k h)= (\lambda \mcRtdt_k)(h)$ for any measure $\lambda$ on $\mathcal{B}(\mathbb{R}^{2d})$ and any measurable function $h:\mathbb{R}^{2d}\rightarrow \mathbb{R}$ and $k\in\mathbb{N}_{+}$. A distribution $\ptdt$ is called an \textit{invariant measure} for $\mcRtdt$ if \bcred{$\ptdt\mcRtdt=\ptdt$}and we say the Markov chain $\{ \xitdt_k\}_{k\in\mathbb{N}}$ is \textit{stationary} if it admits an invariant measure $\ptdt_*$, \sa{i.e., $\xitdt_0$} is distributed according to $\ptdt_*$ implies that the distribution of $\xitdt_k$ is also $\ptdt_*$ for all $k\in\mathbb{N}$. \bc{Depending on the context, we use the notation $\pi_{*}$ interchangeably with $\pi_{*}^{(\tau,\delta,\theta)}$ for simplicity. Similarly, \mg{when all the parameters of the SAPD algorithm are parametrized as function of $\theta$}, then we 
\sa{adopt the notation} $\pi_*^{(\theta)} \triangleq\pi_*^{(\tau_{\theta},\sigma_{\theta},\theta)}$,} and \mg{similarly, 
$\xi_k^{(\theta)}\triangleq \xi_k^{(\tau_{\theta},\sigma_{\theta},\theta)}$ and $\mcR^{(\theta)}\triangleq\mcR^{(\tau_\theta,\sigma_\theta,\theta)}$.
}

\section{Variance Reduced SAPD}\label{sec: var-red-sapd}
\mg{
\sa{Since $f$ is SCSC, the SP problem in} \eqref{def: sapd-opt-prob} admits a unique saddle point $z_* = (x_*, y_*)$. In the following result, \sa{we obtain convergence guarantees for 
the SAPD iterates $(x_k, y_k)$ to the saddle-point 
in $L^2$.}
\sa{Recently,} convergence results for very general choice of SAPD parameters 
\sa{are} given in \cite{zhang2021robust}. Our result considers a specific choice of parameters, \sa{for which} we can prove the existence of an invariant distribution for the iterates.}  
\sa{For this specific choice of $\tau,\sigma$ in $\theta$, 
we were able to provide 
    a sharper result for the expected distance square metric in terms of constants,} compared to those available for SAPD in \cite{zhang2021robust}. 

\begin{proposition}\label{prop: conv-of-SAPD}
Let the parameters $(\tau,\delta,\theta)$ of SAPD algorithm \eqref{alg: sapd} be chosen as, 
\begin{align}
\label{eq:SAPD-parameters}
    \tau_\theta\sa{\triangleq}\frac{1-\theta}{\mu_x \theta},\quad \sigma_\theta\sa{\triangleq}\frac{1-\theta}{\mu_y \theta},\; \sa{\theta \in [\hat{\theta}, 1),}
\end{align}
where $\hat{\theta}\triangleq\max\{\hat{\theta}_1,\hat{\theta}_2\} \in (0,1)$ and
\begin{align*}
    \hat\theta_1\triangleq \sa{\frac{1}{1+(\kappa_x+4\kappa_{yx}^2)^{-1}}<1}, \quad\quad\quad
    \hat\theta_2\triangleq\sa{\frac{2}{{\sqrt{(1+\tfrac{1}{8\kappa_y^2})^2+\frac{1}{2\kappa_y^2}}+1+\tfrac{1}{8\kappa_y^2}}}<1},
\end{align*}
where \sa{$\kappa_x=L_{xx}/\mu_x$, $\kappa_y=L_{yy}/\mu_y$ and $\kappa_{yx}=L_{yx}/\sqrt{\mu_x\mu_y}$,} the iterates $\{z_k\}_{k\in \mathbb{N}_{+}}$ satisfies, 
\begin{align} 
\mathbb{E}\Big[{\mu_x}\Vert x_k-x_*\Vert^2 + \frac{\mu_y(1+\theta)}{2}\Vert y_k-y_*\Vert^2\Big]  \leq \theta^{k} \bcred{\Delta_0} + (1-\theta^k)\tilde{\Xi}_\theta \delta_{(2)}^2, \label{cond: SAPD-convergence-result}
\end{align}
where $z_*^\top=[x_*^\top,y_*^\top]$, $\Delta_0\triangleq{\mu_x}\Vert x_0-x_*\Vert^2 + {\mu_y}\Vert y_0 - y_*\Vert^2$, 
\begin{align*} 
\tilde\Xi_{\theta}\triangleq \frac{1-\theta}{\theta}\left(\frac{2}{\mu_x}+\frac{4}{\mu_y}\Big((1+\theta)^2+\theta^2\Big)\right).
\end{align*}

\end{proposition}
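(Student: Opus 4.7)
The plan is to carry out a Lyapunov analysis in the spirit of the deterministic APD/SAPD convergence proofs (see, e.g., \cite{hamedani2021primal,zhang2021robust}), but with the stepsize parametrization $\tau_\theta = (1-\theta)/(\mu_x\theta)$ and $\sigma_\theta = (1-\theta)/(\mu_y\theta)$ tailored so that the resulting one-step contraction factor comes out exactly equal to $\theta$. First, I would write the standard one-step SAPD inequality: starting from the optimality-like bound for the momentum-averaged gap, which for SAPD takes the form
\begin{equation*}
    \tfrac{1}{2\tau}\|x_{k+1}-x_*\|^2 + \tfrac{1}{2\sigma}\|y_{k+1}-y_*\|^2
    + \tfrac{\mu_x}{2}\|x_{k+1}-x_*\|^2 + \tfrac{\mu_y}{2}\|y_{k+1}-y_*\|^2
    + \text{cross terms}_{k+1}
    \le \theta\,\bigl(\tfrac{1}{2\tau}\|x_{k}-x_*\|^2 + \tfrac{1}{2\sigma}\|y_{k}-y_*\|^2 + \text{cross terms}_k\bigr) + \langle \text{noise}, \cdot \rangle + \text{noise}^2,
\end{equation*}
which is obtained by testing the update rules against $(x_*,y_*)$, using strong convexity/concavity to supply the $\mu_x,\mu_y$ terms, and using the momentum identity $(1+\theta)\tilde\nabla_y f_k - \theta \tilde\nabla_y f_{k-1}$ to telescope the extrapolation error into a bounded cross term involving $\|z_k-z_{k-1}\|$.

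Second, I would substitute the specific parametrization \eqref{eq:SAPD-parameters} so that $1/(2\tau_\theta)+\mu_x/2 = \tfrac{\mu_x}{2\theta}$ and $1/(2\sigma_\theta)+\mu_y/2 = \tfrac{\mu_y}{2\theta}$. This automatically yields the contraction factor $\theta$ in front of the quadratic distance term once the cross terms (arising from the Lipschitz bound involving $L_{xx},L_{yy},L_{yx}$) are dominated. The conditions $\theta\ge\hat\theta_1$ and $\theta\ge\hat\theta_2$ are what make the quadratic form controlling those cross terms positive semidefinite: $\hat\theta_1$ is pinned down by a Young's-inequality split absorbing the $L_{xx}$ and $L_{yx}$ terms into the $\mu_x$-bucket (giving the $\kappa_x+4\kappa_{yx}^2$ dependence), while $\hat\theta_2$ comes from solving the quadratic in $\theta$ that ensures the $L_{yy}$-cross term is dominated by the $\mu_y(1+\theta)/2$ bucket on the $y$-side (explaining the $1/(8\kappa_y^2)$ and $1/(2\kappa_y^2)$ constants). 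The tightest of the two thresholds determines $\hat\theta$.

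Third, I would take conditional expectation with respect to $(\zeta_k^x,\zeta_k^y)$. By the unbiasedness part of Assumption~\ref{assump: noise-properties} the noise-times-deterministic cross terms vanish; by the $p=2$ variance bound \eqref{ineq: noise-var} the squared noise terms contribute at most a constant multiple of $\delta_{(2)}^2$. Carefully tracking the multipliers of the $\|x_k-x_*\|^2$ and $\|y_k-y_*\|^2$ pieces with the chosen stepsizes, and collecting the $\|\tilde\nabla_x f - \nabla_x f\|^2$, $\|\tilde\nabla_y f_k - \nabla_y f_k\|^2$, and $\theta^2\|\tilde\nabla_y f_{k-1}-\nabla_y f_{k-1}\|^2$ contributions, produces
\begin{equation*}
    \mathbb{E}[V_{k+1}] \;\le\; \theta\,\mathbb{E}[V_k] \;+\; c_\theta\,\delta_{(2)}^2,
\end{equation*}
where $V_k = \mu_x\|x_k-x_*\|^2 + \tfrac{\mu_y(1+\theta)}{2}\|y_k-y_*\|^2$ (up to the cross term $\|z_k-z_{k-1}\|^2$, which is absorbed) and $c_\theta = (1-\theta)\tilde\Xi_\theta$ with the constant $\tilde\Xi_\theta$ in the statement; the factors $\tfrac{2}{\mu_x}$ and $\tfrac{4}{\mu_y}((1+\theta)^2+\theta^2)$ appear as the inverse-stepsize weights multiplying the three noise variances.

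Finally, I would unroll this one-step recursion from $k=0$, using $\sum_{j=0}^{k-1}\theta^j = (1-\theta^k)/(1-\theta)$ and the fact that $V_0\le \Delta_0$ (since $(1+\theta)/2\le 1$), to obtain
\begin{equation*}
    \mathbb{E}[V_k] \;\le\; \theta^k V_0 + \frac{1-\theta^k}{1-\theta}\,c_\theta\,\delta_{(2)}^2 \;\le\; \theta^k\Delta_0 + (1-\theta^k)\tilde\Xi_\theta\,\delta_{(2)}^2,
\end{equation*}
which is the stated bound. The main obstacle I anticipate is the bookkeeping for step two: identifying the correct Lyapunov function including the $\|z_k-z_{k-1}\|^2$ cross term and verifying that the stepsize choice \eqref{eq:SAPD-parameters} together with $\theta\ge\hat\theta$ makes the relevant quadratic form positive semidefinite with precisely the constants $\hat\theta_1,\hat\theta_2$ given, while simultaneously producing the clean noise coefficient $\tilde\Xi_\theta$ rather than a larger, slacker constant.
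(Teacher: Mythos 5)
Your plan is correct and follows essentially the same route as the paper: the paper verifies that the parametrization \eqref{eq:SAPD-parameters} (with $\alpha=\tfrac{1-\theta}{2\sigma}$, which is where the $\tfrac{\mu_y(1+\theta)}{2}$ weight comes from) satisfies the matrix inequality of Remark~2.4 in \cite{zhang2021robust} — splitting the $3\times 3$ matrix into two blocks whose PSD-ness yields exactly $\hat\theta_1$ and $\hat\theta_2$ — then invokes the one-step recursion of Theorem~2.1 there, kills the noise cross terms by unbiasedness, bounds the squared-noise terms by $\big(\tau+2\sigma((1+\theta)^2+\theta^2)\big)\delta_{(2)}^2$, and unrolls geometrically, which is precisely your steps two through four. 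The only difference is that you propose to re-derive the one-step Lyapunov inequality from scratch rather than cite it, which is more work but not a different idea.
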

\mg{We are going to show that the invariant measure exists for the given choice of parameters in Proposition \ref{prop: conv-of-SAPD}. One 
\sa{complication is that the Markov Chain corresponding to SAPD iterates becomes non-reversible due to the momentum term;} this can also be seen from the fact that the matrices $M$ and $N$ in the representation \eqref{alg: sapd-dyn-sys} are non-symmetric. Consequently, standard tools for showing the existence of a stationary measure for reversible Markov chains are not applicable to our setting. However, the results 
in~\cite{hairer2011yet} do not require reversibility and are applicable to our setting. More specifically, \cite{hairer2011yet} shows the existence of a stationary  distribution for a Markov Chain provided that \sa{the} so-called ``minorization and drift" conditions hold with certain parameters (see the appendix for the details). In the following, we introduce an assumption on the gradient noise which says that the gradient noise admits a continuous density. This assumption allows us to show that the ``minorization condition" holds.}

\begin{assumption}\label{assump-cont-density} The gradient noise admits a continuous density, 
\sa{i.e., $p$ defined such that $p(\xi, z) dz\triangleq\mathbb{P}( z_{k+1} \in dz | \xi_{k}=\xi)$ is continuous in $(\xi,z)$.} 
\end{assumption}
\mg{The \bcred{minorization} condition from \cite{hairer2011yet} for the transition kernel $\mathcal{R}$ of SAPD iterations \sa{shown in~\eqref{alg: sapd}} requires the existence of a Lyapunov function $\mathcal{V}:\mathbb{R}^{2d}\rightarrow \bcred{[0,\infty)}$ and constants $K\geq 0$ and $\zeta \in (0,1)$ such that 
\begin{equation}
(\bcred{\mcRtdt}\mathcal{V})(\sa{\xi}) \leq \zeta \mathcal{V}(\sa{\xi}) + K,
\label{cond-major-main-text}
\end{equation}
for every $\sa{\xi} \in \mathbb{R}^{2d}$. For this purpose, we devised the following Lyapunov function 
}
\begin{multline}\label{def: lyap-func}
    \mathcal{V}(\xi_k)\sa{\triangleq}\frac{\theta}{1-\theta}\Big(\frac{\mu_x}{4}(\Vert x_k-x_*\Vert^2+\Vert x_{k-1}-x_*\Vert^2)+ \frac{\mu_y}{\sa{8}}(1+\theta)(\Vert y_k-y_*\Vert^2 + \Vert y_{k-1}-y_*\Vert^2)\Big),
\end{multline}
\mg{for which we can show that \eqref{cond-major-main-text} holds 
\sa{for some $\zeta\in(0,1)$ and $K\geq 0$} (see the appendix for more details). Then, building on the results of \cite{hairer2011yet}, we 
\sa{\mg{establish in the following result} that the distributions of SAPD iterates converge to the invariant distribution, and also give a rate result for this convergence} provided that the variance of the gradient noise is not too large. \mg{The proof is deferred to the appendix}.}%
\begin{theorem}\label{thm: inv-meas-exists}  \mg{Consider the SAPD algorithm with parameters given by \eqref{eq:SAPD-parameters}. Suppose that Assumptions \ref{assump: obj-fun-prop}, \ref{assump: noise-properties} and \ref{assump-cont-density} hold, and} 
the variance \bcred{bound} 
\sa{$\delta_{(2)}^2$} of \mg{the noise is small} such that 
\begin{equation}
\sa{{(1+\theta)}\left(\frac{4}{\mu_x}+\frac{8}{\mu_y}\Big((1+\theta)^2+\theta^2\Big)\right)\delta_{(2)}^2}\leq R,
\end{equation}
where $R$ is a constant specified in the Appendix. Then \sa{for any initialization $\xi_0 \in \mathbb{R}^{2d}$,} SAPD iterates 
admit a 
unique invariant measure 
$\pi^{(\theta)}_*$, \sa{i.e., 
$\lambda_{\xi_0}\mcR_k^{(\theta)}$, the distribution of \bcred{$\xi_k^{(\theta)}$}, converges to $\sa{\pi^{(\theta)}_*}$, where $\lambda_{\xi_0}$ denotes the Dirac distribution at $\xi_0$.} Moreover, there exists \sa{$C>0$} 
such that 
$$
\Vert \sa{\mcR_k^{(\theta)}} \psi - \sa{\pi^{(\theta)}_*}(\psi)\Vert \leq \sa{C} 
\sa{\left(\frac{2\theta}{1+\theta}\right)^k}\Vert \psi - \sa{\pi^{(\theta)}_*}(\psi)\Vert,
$$
for \bc{any initialization $\xi_0$} and every measurable function $\psi$ such that $\Vert \psi \Vert <\infty$, where $\Vert \psi \Vert \triangleq \sup_{\xi} \frac{|\psi(\xi)|}{1+\mathcal{V}(\xi)}$ is the weighted supremum norm.
\end{theorem}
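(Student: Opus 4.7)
The plan is to invoke the Harris-type ergodic theorem of \cite{hairer2011yet}, which guarantees existence and uniqueness of an invariant distribution together with geometric ergodicity in a weighted supremum norm whenever a Markov kernel satisfies two conditions: (i) a drift condition of the form $\mathcal{R}^{(\theta)} \mathcal{V} \leq \zeta \mathcal{V} + K$ with $\zeta \in (0,1)$ and $K \geq 0$, and (ii) a minorization condition on the sublevel sets of $\mathcal{V}$. My strategy is to verify both conditions for the SAPD kernel using the Lyapunov function $\mathcal{V}$ defined in \eqref{def: lyap-func}, and then read off the quantitative rate from the Harris bound.

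First I would establish the drift condition. The Lyapunov function $\mathcal{V}$ contains two blocks, one for the fresh iterate $(x_k,y_k)$ and one for the lagged iterate $(x_{k-1},y_{k-1})$. Under a single application of the kernel, the current fresh block becomes the new lagged block, while a new fresh block $(x_{k+1},y_{k+1})$ is generated. Proposition \ref{prop: conv-of-SAPD}, applied over one step, supplies an estimate of the form $\mathbb{E}[\mu_x\|x_{k+1}-x_*\|^2 + \tfrac{\mu_y(1+\theta)}{2}\|y_{k+1}-y_*\|^2 \mid \xi_k] \leq \theta\bigl(\mu_x\|x_k-x_*\|^2 + \mu_y\|y_k-y_*\|^2\bigr) + \tilde\Xi_\theta \delta_{(2)}^2$. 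Combining this with the (deterministic) update of the lagged block and using the outer prefactor $\tfrac{\theta}{1-\theta}$ in $\mathcal{V}$ yields $\mathcal{R}^{(\theta)}\mathcal{V}(\xi) \leq \tfrac{2\theta}{1+\theta}\,\mathcal{V}(\xi) + K$, with $K$ proportional to $\tfrac{\theta}{1-\theta}\tilde\Xi_\theta \delta_{(2)}^2$. The contraction factor $\tfrac{2\theta}{1+\theta}$ emerges from averaging the fresh-block contraction $\theta$ with the one-step aging of the lagged block under the symmetric weighting.

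Second I would verify minorization. Assumption \ref{assump-cont-density} provides a continuous density for the noise, and since the SAPD recursion \eqref{alg: sapd-dyn-sys} is affine in the noise, that continuity transfers to a joint density $q(\xi,\xi')$ for the one-step kernel $\mathcal{R}^{(\theta)}$. Any sublevel set $\mathcal{C}_R=\{\mathcal{V}\leq R\}$ is compact because $\mathcal{V}$ is continuous and coercive. Picking a ball $B$ on which $q(\xi,\cdot)$ admits a strictly positive lower bound uniformly for $\xi\in\mathcal{C}_R$ (possible by joint continuity and compactness), I obtain $\eta>0$ and a probability measure $\nu$ (the normalized restriction of Lebesgue measure to $B$) such that $\mathcal{R}^{(\theta)}(\xi,A) \geq \eta\,\nu(A)$ for all $\xi\in\mathcal{C}_R$ and Borel $A$. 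The threshold $R$ in the statement is chosen so that the variance hypothesis $(1+\theta)\bigl(\tfrac{4}{\mu_x}+\tfrac{8}{\mu_y}((1+\theta)^2+\theta^2)\bigr)\delta_{(2)}^2 \leq R$ enforces the standard Harris calibration $K < \tfrac{R(1-\zeta)}{2}$, which is exactly what makes the drift and minorization compatible. Feeding both conditions into the Hairer--Mattingly theorem produces a unique invariant measure $\pi^{(\theta)}_*$ and the bound $\|\mathcal{R}_k^{(\theta)} \psi - \pi^{(\theta)}_*(\psi)\| \leq C\bigl(\tfrac{2\theta}{1+\theta}\bigr)^k \|\psi - \pi^{(\theta)}_*(\psi)\|$ with $C>0$ depending on $(\zeta,K,\eta,R)$.

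The main obstacle is the drift step. Proposition \ref{prop: conv-of-SAPD} does not directly control $\mathcal{V}$, because $\mathcal{V}$ records both the current and the lagged coordinates with asymmetric weights between $x$ and $y$. One must synchronize the contraction factor $\theta$ of the fresh block with the aging of the lagged block so that both decay at the unified rate $\tfrac{2\theta}{1+\theta}$; the specific coefficients $\tfrac{\mu_x}{4}$, $\tfrac{\mu_y(1+\theta)}{8}$, and the outer scaling $\tfrac{\theta}{1-\theta}$ in \eqref{def: lyap-func} are engineered precisely for this balance. Checking that the noise-induced constant $K$ respects the Harris threshold is the delicate accounting, and is where the explicit variance bound of Assumption \ref{assump: noise-properties} enters. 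The minorization step is comparatively routine once Assumption \ref{assump-cont-density} is invoked, and geometric ergodicity then follows directly from the cited theorem of \cite{hairer2011yet}.
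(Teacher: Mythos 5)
Your high-level strategy --- verifying the Hairer--Mattingly drift and minorization conditions for the kernel $\mathcal{R}^{(\theta)}$ with the Lyapunov function \eqref{def: lyap-func} and reading off geometric ergodicity from their theorem --- is exactly the paper's route, and you correctly identify the drift step as the crux. However, the mechanism you propose for the drift does not work. Write $\mathcal{V}(\xi_k)=V(z_k)+V(z_{k-1})$ with $V(z)=\frac{\theta}{1-\theta}\bigl(\frac{\mu_x}{4}\Vert x-x_*\Vert^2+\frac{\mu_y(1+\theta)}{8}\Vert y-y_*\Vert^2\bigr)$. One step of the chain copies the fresh block into the lagged slot, so
$\mathbb{E}[\mathcal{V}(\xi_{k+1})\mid\xi_k]=\mathbb{E}[V(z_{k+1})\mid\xi_k]+V(z_k)\le\bigl(1+\tfrac{2\theta}{1+\theta}\bigr)V(z_k)+K'$.
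Because the weights on the fresh and lagged blocks in \eqref{def: lyap-func} are \emph{equal}, taking $\xi_k$ with $z_{k-1}=z_*$ forces any admissible contraction factor $\zeta$ to satisfy $\zeta\ge 1+\tfrac{2\theta}{1+\theta}>1$: the ``deterministic update of the lagged block'' contributes no decay, and the outer prefactor $\tfrac{\theta}{1-\theta}$ cancels from both sides, so no rebalancing of the stated coefficients repairs this. What is actually needed (and what the paper does, via the two stacked inequalities \eqref{ineq: drift-ineqs}) is to bound the copied block $V(z_k)$ by a \emph{second} application of the one-step contraction, this time from $z_{k-1}$ to $z_k$; your argument is missing that step, and without it (or an asymmetric weighting of the two blocks) the drift condition fails.

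The minorization step also contains a genuine error: the one-step kernel does \emph{not} admit a joint density $q(\xi,\xi')$ on $\mathbb{R}^{2d}$, because the lagged coordinate of $\xi_{k+1}$ is a deterministic copy of the fresh coordinate of $\xi_k$; only the $z_{k+1}$-marginal has a density, which is precisely what Assumption \ref{assump-cont-density} provides. Consequently, taking $\nu$ to be normalized Lebesgue measure on a ball $B\subset\mathbb{R}^{2d}$ cannot satisfy $\mathcal{R}^{(\theta)}(\xi,A)\ge\eta\,\nu(A)$ for all Borel $A$: choosing $A$ whose projection onto the lagged coordinate avoids the (deterministic) point $z_k$ gives $\mathcal{R}^{(\theta)}(\xi_k,A)=0$ while $\nu(A)>0$. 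The minorizing measure must respect this degeneracy --- the paper constructs $\tilde\nu$ on the $z$-marginal from the density $p(\xi_*,\cdot)$ and lifts it as in \eqref{eq: minorization-nu} and Lemma \ref{lem: minorization}; alternatively one can minorize the two-step kernel, which does have a density. So although you invoke the right theorem, the right Lyapunov function, and the right calibration $K<R(1-\zeta)/2$ for the variance hypothesis, both of your verification steps would fail as written.
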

\mg{\sa{Since $\theta\in(\hat\theta,1)$,} by taking limit superior of both sides in \eqref{cond: SAPD-convergence-result} as $k\to \infty$, it can be seen that we have $$\limsup_{k\to\infty} \mathbb{E}[\|z_k - z_*\|^2] = \mathcal{O}(1-\theta)~\sa{\delta^2_{(2)}}.$$
Since the distribution of the SAPD iterates $z_k$ converges to the stationary distribution based on Theorem \ref{thm: inv-meas-exists}, we can replace the limit superieure with a limit to obtain $$\sa{\mathbb{E}[\|z_\infty - z_*\|^2] =} \lim_{k\to\infty}\mathbb{E}[\|z_k - z_*\|^2] =  \mathcal{O}(1-\theta)~\sa{\delta^2_{(2)}},$$
with the convention that $z_\infty$ is distributed according to the stationary distribution. This is summarized in Lemma \ref{lem: finite-moment} below, where we also characterize the fourth moments of the stationary distribution as a function of $\theta$ as $\theta \to 1$. In particular, obtaining the fourth moments require a careful non-trivial Lyapunov analysis of the SAPD algorithm that involves higher-order interactions between the gradient noise and the iterates; the proof is deferred to the appendix.
}

\begin{lemma}\label{lem: finite-moment}
\mg{
\sa{Under the premise} of Theorem \ref{thm: inv-meas-exists}, 
$\mE[\Vert z_\infty-z_*\Vert^2]=\mathcal{O}(1-\theta)$ and $\mE[\Vert z_\infty-z_*\Vert^4]=\mathcal{O}\left((1-\theta\right)^2)$,
where $z_\infty$ is a random variable distributed according to the stationary distribution of the SAPD iterates.}
\end{lemma}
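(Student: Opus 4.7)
The plan is to handle the two moment bounds in sequence, leveraging Proposition~\ref{prop: conv-of-SAPD} and the geometric ergodicity from Theorem~\ref{thm: inv-meas-exists} for the second moment, and then developing a fourth-order Lyapunov recursion for the fourth moment.

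For the second moment, Proposition~\ref{prop: conv-of-SAPD} already delivers the finite-time estimate
\[
\mathbb{E}\bigl[\mu_x\|x_k-x_*\|^2+\tfrac{\mu_y(1+\theta)}{2}\|y_k-y_*\|^2\bigr]\leq\theta^k\Delta_0+(1-\theta^k)\tilde\Xi_\theta\delta_{(2)}^2,
\]
and direct inspection of $\tilde\Xi_\theta=\tfrac{1-\theta}{\theta}(\tfrac{2}{\mu_x}+\tfrac{4}{\mu_y}((1+\theta)^2+\theta^2))$ shows $\tilde\Xi_\theta=\mathcal{O}(1-\theta)$ as $\theta\to 1$. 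To transfer this to the stationary measure, I would take the test function $\psi(\xi)=\|z-z_*\|^2$, which is dominated by $c(1+\mathcal{V}(\xi))$ thanks to the quadratic shape of $\mathcal{V}$ in \eqref{def: lyap-func}, so that $\|\psi\|<\infty$ in the weighted supremum norm. Theorem~\ref{thm: inv-meas-exists} then yields $\mathcal{R}_k^{(\theta)}\psi(\xi_0)\to\pi_*^{(\theta)}(\psi)$ for every initialization $\xi_0$, and passing to the limit in the Proposition gives $\mE[\|z-z_*\|^2]\leq \tfrac{2}{\min\{\mu_x,\mu_y\}}\tilde\Xi_\theta\delta_{(2)}^2=\mathcal{O}(1-\theta)$.

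For the fourth moment, I would reproduce this strategy one order higher. Concretely, I introduce a fourth-order Lyapunov function
\[
\mathcal{V}_4(\xi_k)\triangleq\tfrac{\theta^2}{(1-\theta)^2}\bigl(c_1\|x_k-x_*\|^4+c_2\|x_{k-1}-x_*\|^4+c_3\|y_k-y_*\|^4+c_4\|y_{k-1}-y_*\|^4\bigr),
\]
with positive coefficients $c_j$ chosen so that the normalization and weighting between the $x$- and $y$-blocks mirror those of $\mathcal{V}$. Starting from the SAPD recursion~\eqref{alg: sapd-dyn-sys}, I would expand $\|z_{k+1}-z_*\|^4=(\|z_{k+1}-z_*\|^2)^2$, take the conditional expectation given $\xi_k$, and use that the noise $\tilde\nabla f-\nabla f$ is zero-mean, independent of $\xi_k$, stationary, and has finite fourth moment by Assumption~\ref{assump: noise-properties} with $p=4$. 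The odd-order cross terms then vanish in expectation, while the surviving terms split into (i) a contractive quartic piece coming from strong convexity/concavity, (ii) mixed terms of the form $\tau^2\|\text{iterate}\|^2\,\delta_{(2)}^2$ and $\sigma^2\|\text{iterate}\|^2\,\delta_{(2)}^2$ that I would bound by Young's inequality against $\mathcal{V}_4(\xi_k)$ plus a multiple of the already-established second moment, and (iii) pure fourth-order noise contributions bounded by $(\tau^4+\sigma^4)\,\delta_{(4)}^4=\mathcal{O}((1-\theta)^4)\delta_{(4)}^4$. Together these should yield a one-step inequality
\[
\mathbb{E}\bigl[\mathcal{V}_4(\xi_{k+1})\mid\xi_k\bigr]\leq\zeta_4\mathcal{V}_4(\xi_k)+K_4,
\]
for some $\zeta_4\in(0,1)$ comparable to $\theta$ and $K_4=\mathcal{O}(1)$, uniformly in $k$ and $\theta\in[\hat\theta,1)$. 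Iterating in expectation gives $\sup_k\mathbb{E}[\mathcal{V}_4(\xi_k)]<\infty$, i.e., $\sup_k\mathbb{E}[\|z_k-z_*\|^4]\leq C(1-\theta)^2$. Finally, the uniform $L^4$ bound provides uniform integrability of $\|z_k-z_*\|^4$ under the weak convergence $\mathcal{R}_k^{(\theta)}(\xi_0,\cdot)\Rightarrow\pi_*^{(\theta)}$ from Theorem~\ref{thm: inv-meas-exists}, and Fatou (or Vitali) then gives $\mE[\|z-z_*\|^4]\leq C(1-\theta)^2$.

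The main obstacle is the fourth-order Lyapunov bookkeeping. Unlike the quadratic case, the quartic expansion intertwines the $(z_k,z_{k-1})$ momentum coupling, the Gauss--Seidel asymmetry of SAPD (the $y$-update sees $z_k$ while the $x$-update sees $(x_k,y_{k+1})$), and all four powers of $\tau_\theta=\mathcal{O}(1-\theta)$ and $\sigma_\theta=\mathcal{O}(1-\theta)$; one must ensure that after absorbing cross terms, the coefficient of $\mathcal{V}_4(\xi_k)$ stays bounded below $1$ uniformly in $\theta\in[\hat\theta,1)$. The balance mirrors the one already struck in Proposition~\ref{prop: conv-of-SAPD}: the strong convexity/concavity contraction contributes a factor $1-\mathcal{O}(1-\theta)$ that must dominate the quartic amplification produced by Young's-inequality splittings of the mixed terms, and the normalization $\theta^2/(1-\theta)^2$ in front of $\mathcal{V}_4$ is precisely what converts $\tau^4\delta_{(4)}^4$ into an $\mathcal{O}(1)$ constant so that iterating the recursion yields the claimed $\mathcal{O}((1-\theta)^2)$ fourth-moment bound at stationarity.
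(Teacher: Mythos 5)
Your second-moment argument is fine and is essentially what the paper does: the appendix version exploits stationarity of $\pi_*^{(\theta)}$ in the one-step inequality $V(z_1)\le\tfrac{2\theta}{1+\theta}V(z_0)+(1-\theta)F_0$ to cancel $\mE[V(z_1)]=\mE[V(z_0)]$ and read off $\mE[\|z-z_*\|^2]=\mathcal{O}\big((1-\theta)\delta_{(2)}^2\big)$ from $\mE[F_0]=\mathcal{O}\big((1-\theta)\delta_{(2)}^2\big)$, while your route of passing to the limit in Proposition~\ref{prop: conv-of-SAPD} via Theorem~\ref{thm: inv-meas-exists} reaches the same conclusion.

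The fourth-moment part has a genuine gap: the entire argument rests on the quartic drift inequality $\mathbb{E}[\mathcal{V}_4(\xi_{k+1})\mid\xi_k]\le\zeta_4\mathcal{V}_4(\xi_k)+K_4$ with $1-\zeta_4=\Theta(1-\theta)$, which you assert but do not prove, and the route you sketch for it (expand $\|z_{k+1}-z_*\|^4$ from the raw recursion \eqref{alg: sapd-dyn-sys} and extract ``a contractive quartic piece coming from strong convexity/concavity'') is not viable as stated. SAPD's single-step map is not a Euclidean contraction obtainable from strong convexity/concavity alone: the contraction only emerges from the coupled primal--dual analysis encoded in the matrix inequality \eqref{cond: MI-2} (the Gauss--Seidel asymmetry and the momentum coupling between $z_k$ and $z_{k-1}$ are exactly what that analysis handles), so reproducing it at fourth order from scratch would amount to redoing the proof of Theorem~\ref{thm: theorem-7} one order higher — this is precisely the bookkeeping you flag as ``the main obstacle'' and leave open. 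The paper sidesteps it entirely: it \emph{squares} the already-established scalar inequality $V(z_1)\le\tfrac{2\theta}{1+\theta}V(z_0)+(1-\theta)F_0$, uses stationarity to set $\mE[V^2(z_1)]=\mE[V^2(z_0)]$ (after first establishing finiteness of the fourth moment by iterating and applying Cauchy--Schwarz to $\sum_k\theta^{-k}F_k$), and — crucially — controls the cross term via the conditional expectation $\mE[F_0\mid z_0]=\mathcal{O}\big((1-\theta)\delta_{(2)}^2\big)$, so that $\mE[V(z_0)F_0]=\mE\big[V(z_0)\,\mE[F_0\mid z_0]\big]=\mathcal{O}\big((1-\theta)^2\big)$. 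This conditioning step is not optional: the cross term does not vanish (contrary to your claim that the odd-order terms all drop out — $F_0$ contains the nonnegative quadratic noise terms $\tau\|\cdot\|^2$ and $\sigma\|\tilde q_0-q_0\|^2$), and bounding it by Cauchy--Schwarz against $\sqrt{\mE[V^2]}\sqrt{\mE[F_0^2]}$ only yields $\mE[\|z-z_*\|^4]=\mathcal{O}(1-\theta)$ rather than the required $\mathcal{O}\big((1-\theta)^2\big)$. Your order-counting for the forcing terms is correct, but without the quartic drift inequality actually established, the proof is incomplete.
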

\mg{Building on this lemma, in the next result, we show that the expected iterates contain a bias that is 
\sa{$\cO(1-\theta)$} as $\theta \to 1$. The bias stems mainly from the fact that we use constant primal and dual stepsizes in SAPD as opposed to decaying stepsizes.} \bc{Our proof is based on relating the bias to the equilibrium covariance of the iterates and characterizing the equilibrium covariance as the solution to a set of coupled algebraic Lyapunov equations (whose solutions can then be approximated as $\theta\to 1$).}
\begin{theorem}\label{thm: expansion-at-inv-meas-mean}
\sa{Under the premise} of Theorem \ref{thm: inv-meas-exists}, \sa{the limit of expected SAPD iterate sequence, i.e.,}
$$\bar{\xi}^{(\theta)}\triangleq\lim_{k\rightarrow \infty}\mathbb{E}[\xi_k^{(\theta)}],$$
\sa{exists} at stationarity 
\sa{such that}
\begin{equation}\label{eq: inv-meas-mean}
    \bar{\xi}^{(\theta)}-\xi_*=(1-\theta)\bcred{\begin{bmatrix} (\hes)^{-1} \left( \nabla^{(3)}f_*M\right)\\
    (\hes)^{-1} \left( \nabla^{(3)}f_*M\right)
    \end{bmatrix}}+\mathcal{O}\left((1-\theta)^{3/2}\right),
\end{equation}
as $\theta \to 1$
where \bcred{$\xi_{*}=(z_*,z_*)$,} $M$ is a fixed matrix \mg{with an explicit formula provided in the appendix}, \mg{$\hes$ is the Hessian matrix at the saddle point $z_*$, and $\nabla^{(3)}_*f$ is the tensor of order 3 that contains the third-order partial derivatives of $f$ at $z_*$.}
\end{theorem}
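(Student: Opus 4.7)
The plan is to derive the bias via a stationarity identity combined with a third-order Taylor expansion of $\nabla f$ about the saddle point, and then to characterize the leading-order stationary covariance through a discrete algebraic Lyapunov equation.

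First, by Theorem~\ref{thm: inv-meas-exists} and Lemma~\ref{lem: finite-moment}, the invariant measure $\pi_*^{(\theta)}$ exists and possesses finite second and fourth moments, so $\bar{\xi}^{(\theta)} = \mathbb{E}_{\pi_*^{(\theta)}}[\xi]$ is well-defined and equals $\lim_{k\to\infty}\mathbb{E}[\xi_k^{(\theta)}]$. Taking expectations in~\eqref{alg: sapd-dyn-sys} at stationarity and using the unbiasedness of $\tilde{\nabla} f$ (Assumption~\ref{assump: noise-properties}), the identity $\bar\xi^{(\theta)} = M\bar\xi^{(\theta)} + N\,\mathbb{E}[\tPhi_\infty]$ reduces to the fixed-point conditions $\mathbb{E}[\nabla_x f(x_\infty, y_\infty^+)] = 0$ and $\mathbb{E}[\nabla_y f(x_\infty, y_\infty)] = 0$, where $(x_\infty, y_\infty^+)$ denotes the joint stationary law of $(x_k, y_{k+1})$. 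Since $f\in \mathcal{C}^4$ with bounded third and fourth derivatives (Assumption~\ref{assump: obj-fun-prop}), the plan is to expand each $\nabla f$ about $z_*$ to second order with a cubic remainder, take expectations, and use H\"older together with Lemma~\ref{lem: finite-moment} to bound the remainder by $\mathcal{O}((1-\theta)^{3/2})$; the outer-product contribution $\mathbb{E}[\Delta z]\,\mathbb{E}[\Delta z]^\top$ is absorbed into that remainder, being $\mathcal{O}((1-\theta)^2)$ once the bias is a posteriori verified to be $\mathcal{O}(1-\theta)$. Writing $\Delta z := z_\infty - z_*$, this produces the key linear system
\[\hes \cdot \mathbb{E}[\Delta z] = -\tfrac{1}{2}\,\nabla^{(3)}f_* \cdot \mathrm{Cov}_{\pi_*^{(\theta)}}(\Delta z) + \mathcal{O}((1-\theta)^{3/2}).\]

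To obtain the leading-order stationary covariance, I would linearize~\eqref{alg: sapd-dyn-sys} about $\xi_*$, yielding $\tilde\xi_{k+1} = A_\theta \tilde\xi_k + N w_k + r_k$, where $A_\theta$ equals the dynamical matrix $M$ plus a correction $N\mathcal{A}_\theta$ with $\mathcal{A}_\theta$ encoding $\hes$ in the SAPD block structure, $w_k$ is mean-zero noise with stationary covariance $\Sigma_w$, and $r_k = O(\|\tilde\xi_k\|^2)$ is a nonlinear remainder. The stationary covariance $\Sigma_\infty := \mathrm{Cov}_{\pi_*^{(\theta)}}(\tilde\xi_\infty)$ then satisfies the discrete Lyapunov equation $\Sigma_\infty = A_\theta \Sigma_\infty A_\theta^\top + N\Sigma_w N^\top + E_\theta$, with $E_\theta = \mathcal{O}((1-\theta)^2)$ by Lemma~\ref{lem: finite-moment} and the $\mathcal{O}(1-\theta)$ scaling of $N$. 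A perturbation expansion of this fixed-point equation around $\theta = 1$ yields $\Sigma_\infty = (1-\theta)\Sigma_0 + \mathcal{O}((1-\theta)^{3/2})$ for an explicit matrix $\Sigma_0$. Inverting $\hes$ (invertible under SCSC) and substituting into the linear system above produces the stated formula, with the matrix $M$ in the theorem identified as $-\tfrac{1}{2}$ times the appropriate $d\times d$ block of $\Sigma_0$; the two identical $d$-dimensional blocks of $\bar\xi^{(\theta)} - \xi_*$ arise because $\mathbb{E}[z_k] = \mathbb{E}[z_{k-1}]$ at stationarity.

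The main obstacle is the perturbation analysis of the Lyapunov equation. As $\theta\to 1$, $A_\theta$ approaches the dynamical matrix $M$, which has spectral radius $1$, so the Lyapunov operator $X\mapsto X - A_\theta X A_\theta^\top$ becomes singular in the limit. One must carefully show that the $\mathcal{O}(1-\theta)$ drift injected by $N\mathcal{A}_\theta$ creates an $\mathcal{O}(1-\theta)$ spectral gap for $A_\theta$, which balances the $\mathcal{O}((1-\theta)^2)$ noise source $N\Sigma_w N^\top$ to deliver the expected $\mathcal{O}(1-\theta)$ covariance scale. A natural reduction is to exploit the block structure $\xi_k = (z_k, z_{k-1})$ together with $z_k - z_{k-1} = \mathcal{O}(1-\theta)$ at stationarity, collapsing the $2d\times 2d$ Lyapunov equation into a tractable one on the covariance of $z_k$ alone, from which $\Sigma_0$ and hence the explicit matrix $M$ of the statement can be read off.
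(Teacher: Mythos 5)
Your proposal follows essentially the same route as the paper: a stationarity identity plus a second-order Taylor expansion of $\nabla f$ at $z_*$ to express $\hes\,\mathbb{E}[z_\infty-z_*]$ through $-\tfrac12\nabla^{(3)}f_*\,\mathbb{E}[(z_\infty-z_*)^{\otimes 2}]$ up to $\mathcal{O}((1-\theta)^{3/2})$, followed by a linearized covariance (Lyapunov) fixed-point equation whose perturbation expansion as $\theta\to1$ identifies the leading $\mathcal{O}(1-\theta)$ covariance. The paper likewise collapses the $2d\times 2d$ discrete equation, using the block structure $\xi=(z_k,z_{k-1})$, to a continuous-type Lyapunov equation $CH\Sigma_1+\Sigma_1HC^\top=-\bar W-E+\mathcal{O}((1-\theta)^3)$ with $C=\frac{1-\theta}{\theta}C_0$, exactly as you anticipate.

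Two points in your sketch are genuine gaps. First, you bound the nonlinear contribution to the covariance equation by $E_\theta=\mathcal{O}((1-\theta)^2)$. That is the \emph{same} order as the noise source $N\Sigma_wN^\top$, and since the inverse of the Lyapunov operator has norm of order $(1-\theta)^{-1}$ (precisely because, as you note, $A_\theta$ approaches spectral radius one), an $\mathcal{O}((1-\theta)^2)$ remainder would contribute at the leading order $\mathcal{O}(1-\theta)$ and contaminate $\Sigma_0$, hence the matrix $M$ in the statement. You need $E_\theta=o((1-\theta)^2)$; the paper obtains $\mathcal{O}((1-\theta)^{5/2})$ by combining the fourth-moment bound $\mathbb{E}[\|z-z_*\|^4]=\mathcal{O}((1-\theta)^2)$ of Lemma~\ref{lem: finite-moment} with Cauchy--Schwarz and the fact that the quadratic Taylor remainder enters the recursion only through $N=\mathcal{O}(1-\theta)$. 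Second, the ``spectral gap'' you invoke is asserted rather than proved: the crux is Lemma~\ref{lem: lyapunov-op-invertible}, which shows $C_0\hes$ is Hurwitz via a complex similarity transformation under which the real part of every eigenvalue becomes a quadratic form in a negative-definite symmetric matrix built from $-\nabla^{(2)}_{xx}f_*/\mu_x$ and $\nabla^{(2)}_{yy}f_*/\mu_y$; this is what makes the limiting operator $(C_0H)\otimes I_d+I_d\otimes(C_0H)$ invertible with a $\theta$-independent inverse, and without it the perturbation expansion does not close. A smaller omission: the $x$-gradient is evaluated at $(x_k,y_{k+1})$, so the mismatch $S$ between $\mathbb{E}[(x_1-x_*,\tilde y_1-y_*)^{\otimes2}]$ and $\mathbb{E}[(z_1-z_*)^{\otimes2}]$ must itself be shown to be $\mathcal{O}((1-\theta)^{3/2})$ before it can be absorbed into the remainder.
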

\mg{Theorem \ref{thm: expansion-at-inv-meas-mean} shows that the expected iterates admit a bias that is $\mathcal{O}(1-\theta)$}. \mg{Following a similar approach to \sa{Richardson-Romberg} extrapolation techniques 
\sa{adopted} for stochastic gradient descent methods \sa{for} unconstrained optimization in~\cite{bridging-the-gap}, 
\sa{our result in \eqref{eq: inv-meas-mean}} suggests that if we generate two SAPD sequence $\{\xi_k^{(\theta_1)}\}$ and $\{\xi_k^{(\theta_2)}\}$, then 
\sa{through appropriately forming a weighted sequence using the two, one can eliminate a significant portion of the bias.} 
\sa{Indeed,} if we choose the parameters as $\theta_2=2\theta_1-1$, then the from \bcred{Theorem \ref{thm: expansion-at-inv-meas-mean}}, we obtain 
$$
2\bar{\xi}^{(\theta_1)}-\bar{\xi}^{(\theta_2)}= \xi_*+\mathcal{O}\left((1-\theta)^{3/2}\right),
$$}%
\sa{i.e., the bias improves from $\cO(1-\theta)$ to $\cO\left((1-\theta)^{3/2}\right)$.} \mg{That being said, the variance of $\{2\xi_k^{(\theta_1)}-\xi_k^{(\theta_2)}\}_k$ sequence could be larger than that of $\{\xi_k^{(\theta_1)}\}$. For controlling the variance, a standard idea in stochastic approximation methods is to consider the average of the iterates, \sa{i.e., consider $\{\tilde{\xi}_k^{(\theta)}\}_{k\in\mathbb{N}_+}$ such that for $k\geq 1$,} }
\begin{equation}
\bcred{\tilde{\xi}_k^{(\theta)}\sa{\triangleq} \frac{1}{k}\sum_{i=0}^{k-1}\xi_{i}^{(\theta)}}.
\label{def-aver-seq}
\end{equation}
\mg{Often, based on central limit theorem-type arguments, 
$\tilde{\xi}_k^{(\theta)}$ improves the variance of $\xi_k^{(\theta)}$ due to its \sa{averaged} nature. 

Next, in the following result, we study the gap between the mean of the averaged 
\sa{iterate} $\tilde{\xi}_k^{(\theta)}$ and \bcred{$\bar{\xi}^{(\theta)}$} defined in Theorem \ref{thm: expansion-at-inv-meas-mean}}. The result shows that this gap decays polynomially fast in $k$ and is of the order $\mathcal{O}(1/k)$.

\begin{theorem}\label{thm: av-seq-conv}
\mg{
\sa{Under the premise} of Theorem \ref{thm: inv-meas-exists}, let $\xi_0$ be the initialization of the SAPD iterates
 $\{\xi_k^{(\theta)}\}_{k\in\mathbb{N}}$. 
\sa{For any $\xi_0$ and \bcred{$k\geq 1$}}, 
\begin{equation}\label{eq: gap-avg-mean-and-inv-meas-mean}
 \mathbb{E}[\tilde{\xi}_k^{(\theta)}]=\bar{\xi}^{(\theta)}+\frac{1+\theta}{1-\theta} \left(1-\left(\frac{2\theta}{1+\theta}\right)^k\right)\frac{C}{k}~\norm{\bcred{\xi_0-\bar{\xi}^{\theta}}},
\end{equation}
where \sa{$C>0$ is the constant defined in Theorem~\ref{thm: inv-meas-exists}} and $\tilde{\xi}_k^{(\theta)}$ is the averaged 
\sa{iterate} defined in \eqref{def-aver-seq}}.
\end{theorem}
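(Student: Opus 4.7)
The strategy is to reduce the bound on the averaged mean to a telescoped sum of one-step biases, apply the geometric ergodicity of Theorem \ref{thm: inv-meas-exists} coordinate-wise, and then evaluate the resulting geometric series in closed form. By linearity of expectation and the definition \eqref{def-aver-seq},
\begin{align*}
\mathbb{E}[\tilde{\xi}_k^{(\theta)}]-\bar{\xi}^{(\theta)}=\frac{1}{k}\sum_{i=0}^{k-1}\left(\mathbb{E}[\xi_i^{(\theta)}]-\bar{\xi}^{(\theta)}\right),
\end{align*}
so it suffices to control each deviation $\mathbb{E}[\xi_i^{(\theta)}]-\bar{\xi}^{(\theta)}$ in norm and then sum.

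Next, for a fixed unit vector $a\in\mathbb{R}^{2d}$, consider the test function $\psi_a(\xi)\triangleq\langle a,\xi-\bar{\xi}^{(\theta)}\rangle$. Because the Lyapunov function $\mathcal{V}$ in \eqref{def: lyap-func} grows quadratically in $\|\xi-\xi_*\|$ while $\psi_a$ is linear, the weighted supremum $\|\psi_a\|=\sup_\xi |\psi_a(\xi)|/(1+\mathcal{V}(\xi))$ is finite. Moreover, $\pi_*^{(\theta)}(\psi_a)=\langle a,\bar{\xi}^{(\theta)}-\bar{\xi}^{(\theta)}\rangle=0$ by the definition of $\bar{\xi}^{(\theta)}$. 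Applying Theorem \ref{thm: inv-meas-exists} with $\psi_a$ therefore yields, after absorbing the $(1+\mathcal{V}(\xi_0))\|\psi_a\|$ prefactor into the constant $C$,
\begin{align*}
\left|\langle a,\mathbb{E}[\xi_i^{(\theta)}]-\bar{\xi}^{(\theta)}\rangle\right|\leq C\left(\frac{2\theta}{1+\theta}\right)^i\|\xi_0-\bar{\xi}^{(\theta)}\|.
\end{align*}
Taking the supremum over $\|a\|=1$ transfers this to $\|\mathbb{E}[\xi_i^{(\theta)}]-\bar{\xi}^{(\theta)}\|$.

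Finally, substituting back and using the geometric series identity $1-\tfrac{2\theta}{1+\theta}=\tfrac{1-\theta}{1+\theta}$,
\begin{align*}
\frac{1}{k}\sum_{i=0}^{k-1}\left(\frac{2\theta}{1+\theta}\right)^i=\frac{1+\theta}{k(1-\theta)}\left(1-\left(\frac{2\theta}{1+\theta}\right)^k\right),
\end{align*}
which, combined with the previous displays, gives the bound in \eqref{eq: gap-avg-mean-and-inv-meas-mean}.

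\textbf{Main obstacle.} The one delicate point is the norm on the right-hand side: Theorem \ref{thm: inv-meas-exists} naturally yields a prefactor of $1+\mathcal{V}(\xi_0)$, whereas \eqref{eq: gap-avg-mean-and-inv-meas-mean} is phrased in terms of $\|\xi_0-\bar{\xi}^{(\theta)}\|$. Since $\mathcal{V}$ is quadratic in $\|\xi-\xi_*\|$ and $\|\xi_*-\bar{\xi}^{(\theta)}\|=\mathcal{O}(1-\theta)$ by Theorem \ref{thm: expansion-at-inv-meas-mean}, this prefactor can be absorbed into a (larger) constant $C$ whenever $\xi_0$ lies in a bounded region; alternatively, one can rerun the Lyapunov argument of Theorem \ref{thm: inv-meas-exists} centered at $\bar{\xi}^{(\theta)}$ rather than $\xi_*$ to obtain the desired linear dependence directly. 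Once this bookkeeping is settled, the rest of the argument is essentially the geometric-series computation above.
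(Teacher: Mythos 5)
Your proposal is correct and follows essentially the same route as the paper: both reduce $\mathbb{E}[\tilde{\xi}_k^{(\theta)}]-\bar{\xi}^{(\theta)}$ to the partial sum $\frac{1}{k}\sum_{i=0}^{k-1}\big(\mathbb{E}[\xi_i^{(\theta)}]-\bar{\xi}^{(\theta)}\big)$, bound each term via the geometric ergodicity of Theorem~\ref{thm: inv-meas-exists} applied to the test function $\xi\mapsto\xi-\bar{\xi}^{(\theta)}$, and evaluate the resulting geometric series; the paper merely packages the same computation as the solution $\psi^{(\theta)}$ of a Poisson equation $(\mathrm{Id}-\mathcal{R})\psi^{(\theta)}=\tilde{\phi}$, which is not needed for the stated bound. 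The bookkeeping issue you flag---that Theorem~\ref{thm: inv-meas-exists} controls a weighted supremum norm and hence yields a $(1+\mathcal{V}(\xi_0))\,\|\psi_a\|$ prefactor rather than $\|\xi_0-\bar{\xi}^{(\theta)}\|$---is genuine, and the paper's own proof silently makes the same substitution, so your explicit acknowledgment (and the suggested fix of recentering the Lyapunov argument at $\bar{\xi}^{(\theta)}$) is if anything more careful than the original.
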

From the equations \eqref{eq: inv-meas-mean} and \eqref{eq: gap-avg-mean-and-inv-meas-mean}, we obtain that  
\begin{equation}\label{eq: mean-av-seq}
\mathbb{E}[\tilde{\xi}_k^{(\theta)}]- \xi_* = 
\sa{\frac{1+\theta}{1-\theta}~\frac{C}{k}\norm{\xi_0-\bar{\xi}^{\theta}}}+ (1-\theta) \bcred{\begin{bmatrix}(\hes)^{-1} \nabla^{(3)}f_*M\\(\hes)^{-1} \nabla^{(3)}f_*M \end{bmatrix}} + r_{\theta},
\end{equation}
where $\Vert r_{\theta}\Vert <C_3(1-\theta)^{3/2} + \sa{\tfrac{1}{k}} e^{-k C_4(1-\theta)}$ for some constants $C_3, C_4>0$. In particular, given that the error between $\mathbb{E}[\tilde{\xi}_k^{(\theta)}]$ and $\bar{\xi}^{\theta}$ is vanishing, 
we could apply the same variance reduction ideas to the averaged sequence \bcred{$\{\tilde{\xi}_k^{(\theta)}\}$} to reduce the variance. In the following section, we provide numerical examples to illustrate the benefits of our variance reduced SAPD method.

\section{Numerical Experiments}
In this section, we compare the performance of VR-SAPD with SAPD \cite{zhang2021robust}, S-OGDA \cite{fallah_2020}, and SMP \cite{SMP} on the $\ell_2$-regularized distributionally robust learning problem
\begin{equation}\label{def: DRO}
    \min_{x\in \mathcal{S}}\max_{y\in \mathcal{P}_r}\sa{f_{0}}(x,y)\triangleq \frac{\mu_x}{2}\Vert x \Vert^2 + \sum_{i=1}^{n}y_i \phi_i(x)\tag{DRO},
\end{equation}
where $\mathcal{S}\triangleq \{x \in\mathbb{R}^d : \Vert x \Vert^2 \leq \mathcal{D}_x \} $ for some diameter $\mathcal{D}_x>0$ and $\phi_i:\mathbb{R}^d\rightarrow \mathbb{R}$ \sa{is a convex loss function corresponding to the $i$-th data point for $i=1,\ldots,n$.} 
\mg{While \sa{for} the 
\sa{classic} empirical risk minimization \sa{problem} 
the weights are selected 
\sa{as} $y_i = 1/n$ for every $i=1,\ldots,n$, the robust formulation \sa{in}~\eqref{def: DRO} \sa{allows for $\{y_i\}_{i=1}^n$ to 
come from} an uncertainty set around the uniform weights
$\mathcal{P}_r \triangleq \{ y \in \mathbb{R}_+^n : \mathbf{1}^\top y=1, \Vert y-\mathbf{1}/n \Vert^2 \leq \frac{r}{n^2} \}$  whose radius is controlled by the parameter $r$, where $\mathbf{1}$ is the vector 
\sa{of all ones} --see the experimental section in~\cite{zhang2021robust} for more details.}

\mg{The DRO formulation is 
\sa{affine} in the variable $y$; \sa{hence,} not strongly concave in $y$. However, in a similar spirit to Nesterov's smoothing technique \cite{Nesterov}, we can introduce a 
dual regularizer to have a SCSC approximation to 
\eqref{def: DRO}}. \sa{This corresponds to approximating the original problem in the primal form that is non-smooth with a smooth primal optimization problem. Indeed, we will implement VR-SAPD algorithm on}
\begin{small}
\begin{equation} \label{def: SCSC-problem}
\min_{x\in \mathcal{S}} \max_{y\in \mathcal{P}_r} f(x,y) \triangleq \frac{\mu_x}{2}\Vert x \Vert^2 + \sum_{i=1}^{n}y_i \phi_i(x) - \frac{\mu_y}{2}\Vert y\Vert^2 \tag{\sa{Reg-DRO}},
\end{equation}
\end{small}%
where $\mu_y$ is chosen appropriately, \sa{i.e., $\mu_y=\Theta(\epsilon)$.} \bc{In particular, if we choose  $\mu_y=\frac{\epsilon}{2\mathcal{D}_y}$ where $\mathcal{D}_y\sa{\triangleq}\sup_{y\in \mathcal{P}_r}\Vert y\Vert^2=1$ \sa{for} \eqref{def: SCSC-problem} and find a solution $(\bar{x},\bar{y})$ satisfying $\mE[\sup_{(x,y)\in\mathcal{S}\times \mathcal{P}_r}\sa{\{}f(\bar{x},y)-f(x,\bar{y})\sa{\}}]<\frac{3\epsilon}{4}$, then it can be shown that $(\bar{x},\bar{y})$ also satisfies $\mE[\sup_{(x,y)\in\mathcal{S}\times \mathcal{P}_r}\sa{\{f_0}(\bar{x},y)-\sa{f_0}(x,\bar{y})\sa{\}}]<\epsilon$.}  
In the following, we consider the binary logistic loss function $\phi_i(x)=\log(1+\exp(-b_i a_i^\top x))$ calculated at \sa{the $i$-th} data point from $\{a_i,b_i\}_{i\in \{1,..,n\}}$, and we set $r=2\sqrt{n}$. Notice that the objective of \eqref{def: SCSC-problem} admits Lipschitz constants $L_{xy}=L_{yx}=\Vert A \Vert_2$, $L_{xx}=\max_{i=1,..,n}\{\frac{1}{4}\Vert a_i \Vert^2_2 \}$, and $L_{yy}=0$ where $A\in\mathbb{R}^{n\times d}$ is the data matrix with rows $\{a_i\}_{i=1}^{n}$ and columns $\{A_j\}_{j=1}^d$. 
\sa{To stay feasible at each iteration, after performing $y_{k+1}$ and $x_{k+1}$ steps as in \eqref{alg: sapd}, we computed the \sa{Euclidean} projection onto} the constraint sets $\mathcal{S}$ and $\mathcal{P}_r$. \mg{In our theoretical results, for the simplicity of the presentation, we considered SAPD without projection steps. However, our convergence results to the stationary distribution extend in a straightforward manner when such projection steps are included.}\footnote{\mg{This is because the projection is a non-expansive operation \cite{bertsekas2015convex} which does not increase the distance to the solution and therefore the minorization and drift conditions will still hold for our distance-based Lyapunov function \eqref{def: lyap-func} after minor modifications to the proof of Theorem \ref{thm: inv-meas-exists}}.} 


To illustrate our results from Section \ref{sec: var-red-sapd}, 
we consider two sequences $\xi_k^{(\theta_1)}$ and $\xi_{k}^{(\theta_2)}$, \sa{both initialized with the same point $\xi_0$,} where 
\sa{such that} $\theta_2=2\theta_1-1$; and then consider the performance of the interpolated sequence $2\tilde{\xi}_{k}^{(\theta_1)}-\tilde{\xi}_{k}^{(\theta_2)}$. 
\begin{figure}[h]
    \centering
    \includegraphics[width=0.7\linewidth]{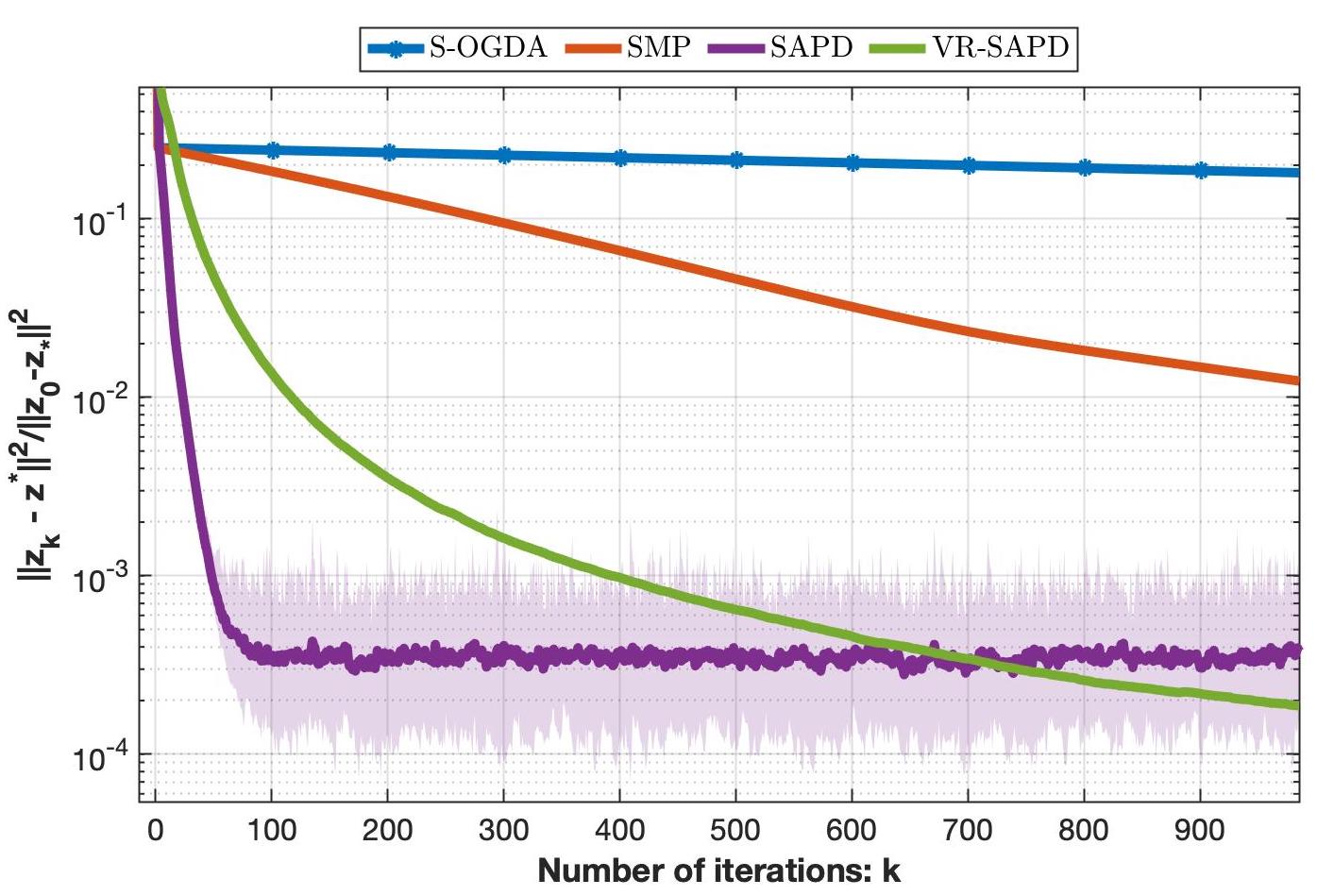}
    \caption{Comparison of S-OGDA, SMP, SAPD, and VR-SAPD on the Arcene data set in terms of the \bcred{relative} expected distance squared \bcred{$\mathbb{E}\|z_k - z_*\|^2/\Vert z_0-z_*\Vert^2$} as a performance metric.}
    \label{fig: arcene-fig}
\end{figure}
\begin{figure}[ht]
    \centering
    \includegraphics[width=0.7\linewidth]{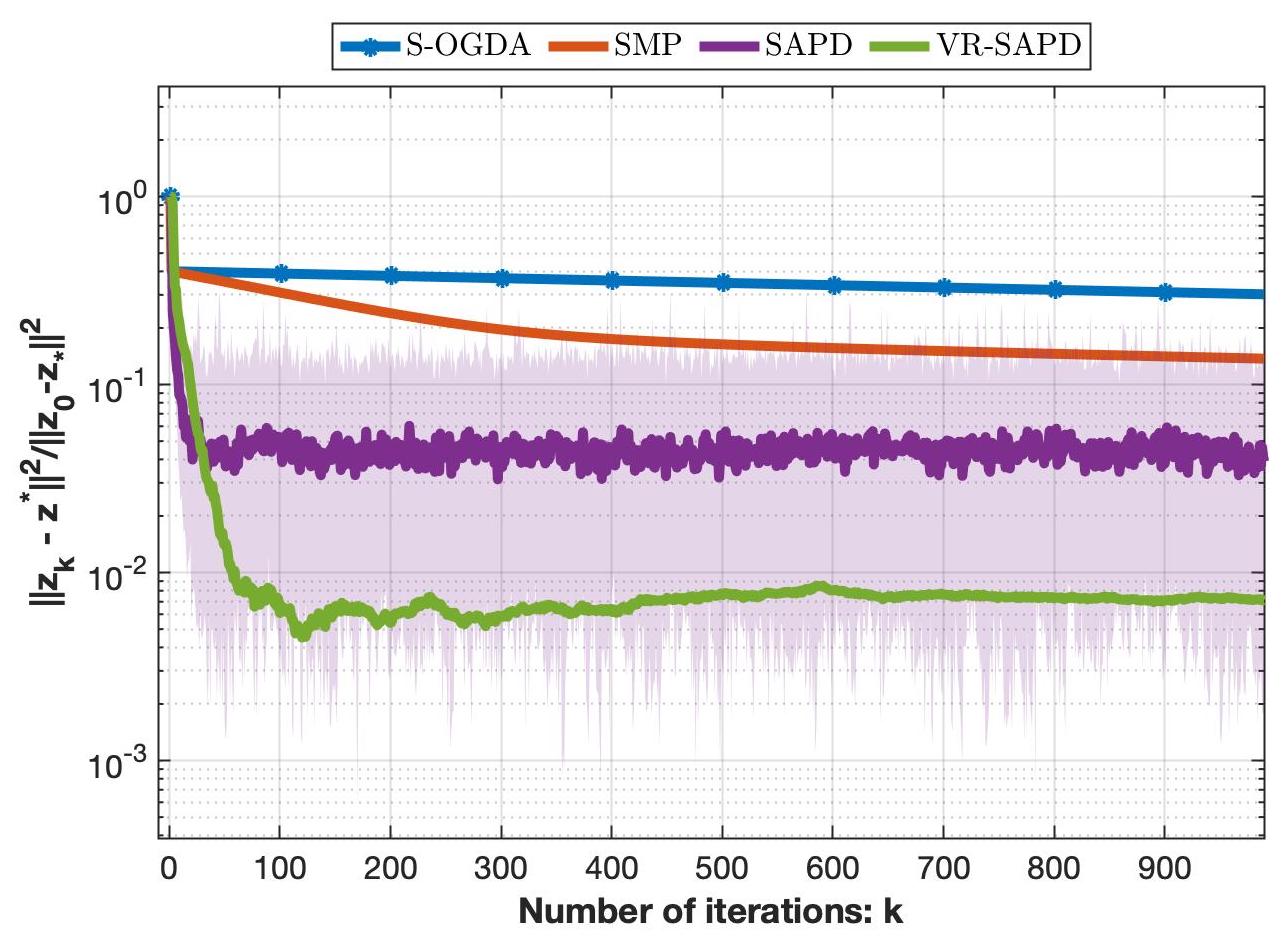}
    \caption{Comparison of S-OGDA, SMP, SAPD, and VR-SAPD on the Dry Bean data set in terms of the relative expected distance squared $\mathbb{E}\|z_k - z_*\|^2/\Vert z_0-z_*\Vert^2$ as a performance metric.}
    \label{fig: drybean-fig}
\end{figure}
\begin{figure}[h]
    \centering
    \includegraphics[width=0.7\linewidth]{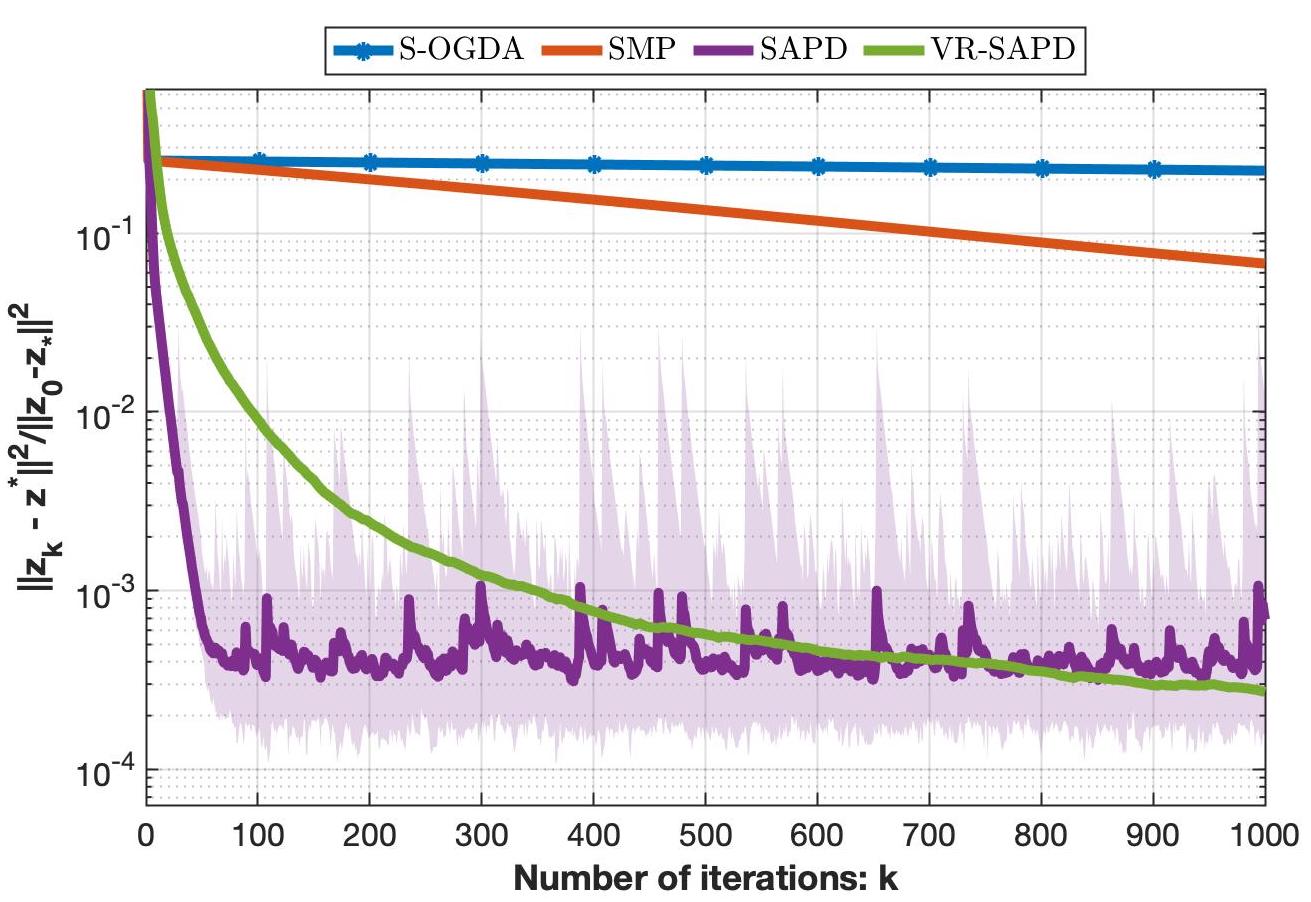}
    \caption{Comparison of S-OGDA, SMP, SAPD, and VR-SAPD on the MNIST data set in terms of the \bcred{relative} expected distance squared \bcred{$\mathbb{E}\|z_k - z_*\|^2/\Vert z_0-z_*\Vert^2$} as a performance metric.}
    \label{fig: mnist-fig}
\end{figure}%

We consider the algorithms VR-SAPD 
, SAPD, S-OGDA, and SMP on three datasets:
\begin{enumerate}
    \item Dry Bean dataset \cite{DryBean} with $(n,d)=(9528, 16)$;
    \item Arcene dataset \cite{Arcene} with $(n,d)=(97,10000)$;
    \item MNIST dataset \cite{lecun1998gradient2} for hand-written digit classification with \bcred{$(n,d)=(1000,400)$}.
\end{enumerate}
Throughout the experiments, we set the regularization parameter $\mu_x$ through cross-validation and defined the parameters $(\tau_{\theta}, \sigma_\theta)$ of both VR-SAPD and SAPD as given in Proposition \ref{prop: conv-of-SAPD}, tuning the parameter as $\theta=0.95$. The gradients are estimated from mini-batches of data sampled at random with replacement where we have a batch size of $10$. At Dry Beans, we set $\mu_x=0.01$, $\mu_y=10$. 
We normalize each feature column $A_j\in\mathbb{R}^n$ using $A_j= \frac{A_j-\min(A_j)}{\max(A_j)-\min(A_j)}$ where max and min are taken over the elements of $A_j$. For the Arcene data set, we set $\mu_x=0.02$ and $\mu_y=10$ and normalize the data as $A\leftarrow A/\min\{\sqrt{d},\sqrt{n}\}$. Lastly, we set $\mu_x=0.1$, $\mu_y=10$ and mini-batch size $10$ for MNIST. We set $b_i=1$ if the i-th data point is number four and set $b_i=-1$, otherwise.

We generated two sequences $\{z^{(\theta_1)}_k\}_{k\in \mathbb{N}}$ and $\{z^{(\theta_2)}_k\}_{k\in\mathbb{N}}$ with $\theta_1=\theta$ and $\theta_2=2\theta-1$. For VR-SAPD, we considered the extrapolated sequence $z_k=2z_k^{(\theta_1)}-z_k^{(\theta_2)}$. The step-size of S-OGDA and SMP are taken as $\frac{1}{L}$ and $\frac{1}{\sqrt{L}}$, respectively, where $L=\max\{L_{xx}+\mu_x, \mu_y, L_{xy}, L_{yx}\}$.
\bcred{We generated 50 random sample paths to compute the mean and the variance of $\{\Vert z_k-z_*\Vert^2\}_{k=1}^K$ generated by the algorithms, \sa{each running for $K=1000$ iterations},}
where we 
\sa{closely approximate} $z_*$ by running the deterministic APD algorithm~\cite{hamedani2021primal} until the change the loss function is below $\%0.01$. \mg{Our results for each dataset are summarized in Figures \ref{fig: arcene-fig}, \ref{fig: drybean-fig} and \ref{fig: mnist-fig} where we compared the methods in terms of the expected \bcred{relative} \sa{change in the} distance squared, \sa{i.e.,} $\mathbb{E}[\bcred{\frac{\|z_k - z_*\|^2}{\Vert z_0-z_*\Vert^2}}]$, as a performance metric \sa{--all the algorithms are initialized from 
$x_0= 2~\mathbf{1}_{d_x}$ and $y_0=1/d_{y}\mathbf{1}_{d_y}$.} 
For each method, we also display the standard deviation \sa{as the shaded region around the average of 50 sample paths.} Our results show that our variance reduction techniques result in a smaller asymptotic variance for the SAPD iterates as intended. We also see that VR-SAPD iterates admit a smaller variance in general compared to SAPD and improves upon the other methods SMP and S-OGDA. These results showcase the benefits of our variance reduction techniques.}

\section{Conclusion}
\mg{Under some assumptions, we showed that with constant stepsize, the distribution of the SAPD iterates admit a stationary distribution and characterized its dependency to the SAPD parameters. Based on these results, we introduced a variance-reduced stochastic accelerated primal-dual method (VR-SAPD), 
\sa{which is} based on Richardson-Romberg extrapolation. We also illustrated the efficiency and benefits of \sa{our 
accelerated variance-reduced method} on robust training of logistic regression models.} 

\section{Acknowledgements}
This work was funded by the grants ONR N00014-21-1-2244, \sa{ONR N00014-21-1-2271}, NSF CCF-1814888 and NSF DMS-2053485. \bcred{We thank Xuan Zhang for providing \sa{us with the 
MATLAB implementations of SAPD, S-OGDA and SMP for our} numerical experiments.}




\bibliographystyle{unsrtnat}
\bibliography{bugra_ref.bib}  

\begin{thebibliography}{31}
\providecommand{\natexlab}[1]{#1}
\providecommand{\url}[1]{\texttt{#1}}
\expandafter\ifx\csname urlstyle\endcsname\relax
  \providecommand{\doi}[1]{doi: #1}\else
  \providecommand{\doi}{doi: \begingroup \urlstyle{rm}\Url}\fi

\bibitem[{Zhang} et~al.(2021){Zhang}, Aybat, and
  {G{\"u}rb{\"u}zbalaban}]{zhang2021robust}
Xuan {Zhang}, Necdet~Serhat Aybat, and Mert {G{\"u}rb{\"u}zbalaban}.
\newblock {Robust Accelerated Primal-Dual Methods for Computing Saddle Points}.
\newblock \emph{arXiv e-prints}, art. arXiv:2111.12743, November 2021.

\bibitem[G{\"u}rb{\"u}zbalaban et~al.(2020)G{\"u}rb{\"u}zbalaban,
  Ruszczy{\'n}ski, and Zhu]{gurbuzbalaban2020stochastic}
Mert G{\"u}rb{\"u}zbalaban, Andrzej Ruszczy{\'n}ski, and Landi Zhu.
\newblock A stochastic subgradient method for distributionally robust
  non-convex learning.
\newblock \emph{arXiv preprint arXiv:2006.04873}, 2020.

\bibitem[Duchi and Namkoong(2021)]{duchi2021learning}
John~C Duchi and Hongseok Namkoong.
\newblock Learning models with uniform performance via distributionally robust
  optimization.
\newblock \emph{The Annals of Statistics}, 49\penalty0 (3):\penalty0
  1378--1406, 2021.

\bibitem[Arjovsky et~al.(2017)Arjovsky, Chintala, and
  Bottou]{pmlr-v70-arjovsky17a}
Martin Arjovsky, Soumith Chintala, and L{\'e}on Bottou.
\newblock {W}asserstein generative adversarial networks.
\newblock In Doina Precup and Yee~Whye Teh, editors, \emph{Proceedings of the
  34th International Conference on Machine Learning}, volume~70 of
  \emph{Proceedings of Machine Learning Research}, pages 214--223. PMLR, 06--11
  Aug 2017.
\newblock URL \url{https://proceedings.mlr.press/v70/arjovsky17a.html}.

\bibitem[Nouiehed et~al.(2019)Nouiehed, Sanjabi, Huang, Lee, and
  Razaviyayn]{nouiehed2019solving}
Maher Nouiehed, Maziar Sanjabi, Tianjian Huang, Jason~D Lee, and Meisam
  Razaviyayn.
\newblock Solving a class of non-convex min-max games using iterative first
  order methods.
\newblock \emph{Advances in Neural Information Processing Systems},
  32:\penalty0 14934--14942, 2019.

\bibitem[Palaniappan and Bach(2016)]{palaniappan2016stochastic}
Balamurugan Palaniappan and Francis Bach.
\newblock Stochastic variance reduction methods for saddle-point problems.
\newblock In \emph{Advances in Neural Information Processing Systems}, pages
  1416--1424, 2016.

\bibitem[Liu et~al.(2021)Liu, Rafique, Lin, and Yang]{liu2021first}
Mingrui Liu, Hassan Rafique, Qihang Lin, and Tianbao Yang.
\newblock First-order convergence theory for weakly-convex-weakly-concave
  min-max problems.
\newblock \emph{Journal of Machine Learning Research}, 22\penalty0
  (169):\penalty0 1--34, 2021.

\bibitem[Chaudhuri et~al.(2011)Chaudhuri, Monteleoni, and
  Sarwate]{chaudhuri2011differentially}
Kamalika Chaudhuri, Claire Monteleoni, and Anand~D Sarwate.
\newblock Differentially private empirical risk minimization.
\newblock \emph{Journal of Machine Learning Research}, 12\penalty0 (3), 2011.

\bibitem[Zhao(2021)]{accSFO_SP_1}
Renbo Zhao.
\newblock Accelerated stochastic algorithms for convex-concave saddle-point
  problems, 2021.

\bibitem[Fallah et~al.(2020)Fallah, Ozdaglar, and Pattathil]{fallah_2020}
Alireza Fallah, Asuman Ozdaglar, and Sarath Pattathil.
\newblock An optimal multistage stochastic gradient method for minimax
  problems.
\newblock In \emph{2020 59th IEEE Conference on Decision and Control (CDC)},
  pages 3573--3579, 2020.
\newblock \doi{10.1109/CDC42340.2020.9304033}.

\bibitem[Dubey et~al.(2016)Dubey, J.~Reddi, Poczos, J~Smola, P~Xing, and
  A~Williamson]{VarRedSGLD}
Avinava Dubey, Sashank J.~Reddi, Barnabas Poczos, Alexander J~Smola, Eric
  P~Xing, and Sinead A~Williamson.
\newblock Variance reduction in stochastic gradient langevin dynamics.
\newblock \emph{Advances in neural information processing systems},
  29:\penalty0 1154--1162, 12 2016.

\bibitem[Wang et~al.(2019)Wang, Ji, Zhou, Liang, and Tarokh]{VarRedSpiderBoost}
Zhe Wang, Kaiyi Ji, Yi~Zhou, Yingbin Liang, and Vahid Tarokh.
\newblock Spiderboost and momentum: Faster variance reduction algorithms.
\newblock In H.~Wallach, H.~Larochelle, A.~Beygelzimer, F.~d\textquotesingle
  Alch\'{e}-Buc, E.~Fox, and R.~Garnett, editors, \emph{Advances in Neural
  Information Processing Systems}, volume~32. Curran Associates, Inc., 2019.
\newblock URL
  \url{https://proceedings.neurips.cc/paper/2019/file/512c5cad6c37edb98ae91c8a76c3a291-Paper.pdf}.

\bibitem[Thekumparampil et~al.(2019)Thekumparampil, Jain, Netrapalli, and
  Oh]{thekumparampil2019efficient}
Kiran~Koshy Thekumparampil, Prateek Jain, Praneeth Netrapalli, and Sewoong Oh.
\newblock Efficient algorithms for smooth minimax optimization.
\newblock \emph{arXiv preprint arXiv:1907.01543}, 2019.

\bibitem[Chen et~al.(2017)Chen, Lan, and Ouyang]{chen2017accelerated}
Yunmei Chen, Guanghui Lan, and Yuyuan Ouyang.
\newblock Accelerated schemes for a class of variational inequalities.
\newblock \emph{Mathematical Programming}, 165\penalty0 (1):\penalty0 113--149,
  2017.

\bibitem[Hsieh et~al.(2019)Hsieh, Iutzeler, Malick, and
  Mertikopoulos]{hsieh2019convergence}
Yu-Guan Hsieh, Franck Iutzeler, J{\'e}r{\^o}me Malick, and Panayotis
  Mertikopoulos.
\newblock On the convergence of single-call stochastic extra-gradient methods.
\newblock \emph{arXiv preprint arXiv:1908.08465}, 2019.

\bibitem[Hamedani and Aybat(2021)]{hamedani2021primal}
Erfan~Yazdandoost Hamedani and Necdet~Serhat Aybat.
\newblock A primal-dual algorithm with line search for general convex-concave
  saddle point problems.
\newblock \emph{SIAM Journal on Optimization}, 31\penalty0 (2):\penalty0
  1299--1329, 2021.

\bibitem[{Dieuleveut} et~al.(2017){Dieuleveut}, {Durmus}, and
  {Bach}]{bridging-the-gap}
Aymeric {Dieuleveut}, Alain {Durmus}, and Francis {Bach}.
\newblock {Bridging the Gap between Constant Step Size Stochastic Gradient
  Descent and Markov Chains}.
\newblock \emph{arXiv e-prints}, art. arXiv:1707.06386, July 2017.

\bibitem[Bach(2021)]{bach-richardson}
Francis Bach.
\newblock On the effectiveness of richardson extrapolation in data science.
\newblock \emph{SIAM Journal on Mathematics of Data Science}, 3\penalty0
  (4):\penalty0 1251--1277, 2021.
\newblock \doi{10.1137/21M1397349}.
\newblock URL \url{https://doi.org/10.1137/21M1397349}.

\bibitem[Chavdarova et~al.(2019)Chavdarova, Gidel, Fleuret, and
  Lacoste-Julien]{chavdarova2019reducing}
Tatjana Chavdarova, Gauthier Gidel, Fran{\c{c}}ois Fleuret, and Simon
  Lacoste-Julien.
\newblock Reducing noise in gan training with variance reduced extragradient.
\newblock \emph{Advances in Neural Information Processing Systems},
  32:\penalty0 393--403, 2019.

\bibitem[Alacaoglu and Malitsky(2021)]{alacaoglu2021stochastic}
Ahmet Alacaoglu and Yura Malitsky.
\newblock Stochastic variance reduction for variational inequality methods.
\newblock \emph{arXiv preprint arXiv:2102.08352}, 2021.

\bibitem[Juditsky et~al.(2011)Juditsky, Nemirovskii, and Tauvel]{SMP}
Anatoli Juditsky, Arkadii~S. Nemirovskii, and Claire Tauvel.
\newblock Solving variational inequalities with stochastic mirror-prox
  algorithm, 2011.

\bibitem[Kontonis et~al.(2020)Kontonis, Liu, and
  Tzamos]{kontonis2020convergence}
Vasilis Kontonis, Sihan Liu, and Christos Tzamos.
\newblock Convergence and sample complexity of sgd in gans.
\newblock \emph{arXiv preprint arXiv:2012.00732}, 2020.

\bibitem[Hairer and Mattingly(2011)]{hairer2011yet}
Martin Hairer and Jonathan~C Mattingly.
\newblock Yet another look at harris’ ergodic theorem for markov chains.
\newblock In \emph{Seminar on Stochastic Analysis, Random Fields and
  Applications VI}, pages 109--117. Springer, 2011.

\bibitem[Nesterov(2013)]{Nesterov}
Y.~Nesterov.
\newblock \emph{Introductory Lectures on Convex Optimization: A Basic Course}.
\newblock Springer Science \& Business Media, 2013.

\bibitem[Bertsekas(2015)]{bertsekas2015convex}
Dimitri Bertsekas.
\newblock \emph{Convex optimization algorithms}.
\newblock Athena Scientific, 2015.

\bibitem[Koklu and Ozkan(2020)]{DryBean}
Murat Koklu and Ilker~Ali Ozkan.
\newblock Multiclass classification of dry beans using computer vision and
  machine learning techniques.
\newblock \emph{Computers and Electronics in Agriculture}, 174:\penalty0
  105507, 07 2020.
\newblock \doi{10.1016/j.compag.2020.105507}.

\bibitem[Guyon et~al.(2004)Guyon, Gunn, Ben-Hur, and Dror]{Arcene}
Isabelle Guyon, Steve Gunn, Asa Ben-Hur, and Gideon Dror.
\newblock Result analysis of the nips 2003 feature selection challenge.
\newblock \emph{Advances in neural information processing systems}, 17, 2004.

\bibitem[LeCun et~al.(1998)LeCun, Bottou, Bengio, and
  Haffner]{lecun1998gradient2}
Yann LeCun, L{\'e}on Bottou, Yoshua Bengio, and Patrick Haffner.
\newblock Gradient-based learning applied to document recognition.
\newblock \emph{Proceedings of the IEEE}, 86\penalty0 (11):\penalty0
  2278--2324, 1998.

\bibitem[Borno and Gajic(1995)]{borno1995parallel}
I~Borno and Z~Gajic.
\newblock Parallel algorithm for solving coupled algebraic lyapunov equations
  of discrete-time jump linear systems.
\newblock \emph{Computers \& Mathematics with Applications}, 30\penalty0
  (7):\penalty0 1--4, 1995.

\bibitem[Govaerts and Pryce(1989)]{sing-val-submatrix}
W~Govaerts and JD~Pryce.
\newblock A singular value inequality for block matrices.
\newblock \emph{Linear algebra and its applications}, 125:\penalty0 141--148,
  1989.

\bibitem[Zhou et~al.(1996)Zhou, Doyle, and Glover]{zhou1996robust}
Kemin Zhou, JC~Doyle, and Keither Glover.
\newblock Robust and optimal control.
\newblock \emph{Control Engineering Practice}, 4\penalty0 (8):\penalty0
  1189--1190, 1996.

\end{thebibliography}
\newpage
\onecolumn
\section{Appendix}\label{sec: appendix}
\subsection{Proof of Proposition~\ref{prop: conv-of-SAPD}}
\sa{In order to prove Proposition~\ref{prop: conv-of-SAPD}, we first provide a technical lemma about a specific SAPD parameter selection, and next we give an adaptation of a convergence result from \cite{zhang2021robust}.} 

It has been shown \sa{in Remark 2.4 of \citep{zhang2021robust} that} \sa{the SAPD parameters $\tau,\sigma>0$ and $\theta\in(0,1)$} are admissible if they satisfy the \mg{inequalities} given in \eqref{cond: MI-2} \sa{for some $\alpha>0$.} \mg{In the next result, we show that our choice of parameters satisfy these inequalities for some $\alpha>0$.}
\begin{lemma}
\label{lem:MI-cond}
\sa{The SAPD parameters $\tau,\sigma>0$ and $\theta\in(0,1)$ chosen as in \eqref{eq:SAPD-parameters} satisfy}
\begin{align}\label{cond: MI-2}
    \min\{\tau \mu_x, \sigma \mu_y \}\geq \frac{1-\theta}{\theta}, \quad \begin{bmatrix} 
    \frac{1}{\tau}-L_{xx} & 0 & -L_{yx}\\ 
    0 & \frac{1}{\sigma}-\alpha & -L_{yy}\\ 
    -L_{yx} & -L_{yy} & \frac{\alpha}{\theta}
    \end{bmatrix} \succeq 0,
\end{align}
\sa{for $\alpha=\frac{1-\theta}{2\sigma}\in(0,\frac{1}{\sigma})$.}
\end{lemma}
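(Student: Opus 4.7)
\textbf{Proof plan for Lemma~\ref{lem:MI-cond}.}

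The first inequality is immediate by construction: substituting $\tau_\theta=\tfrac{1-\theta}{\mu_x\theta}$ and $\sigma_\theta=\tfrac{1-\theta}{\mu_y\theta}$ gives $\tau_\theta\mu_x=\sigma_\theta\mu_y=\tfrac{1-\theta}{\theta}$, so $\min\{\tau_\theta\mu_x,\sigma_\theta\mu_y\}=\tfrac{1-\theta}{\theta}$. Hence I only need to establish the matrix inequality. The plan is to first rewrite the $3\times3$ matrix with $\alpha=\tfrac{1-\theta}{2\sigma_\theta}$ plugged in, and then establish positive semidefiniteness by a direct Young-style bound on the quadratic form (rather than through the $2\times 2$ principal minors, which turn out to be too weak to reproduce the exact constants $\hat\theta_1,\hat\theta_2$).

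With $\alpha=\tfrac{1-\theta}{2\sigma_\theta}$ I compute $\tfrac{1}{\sigma_\theta}-\alpha=\tfrac{1+\theta}{2\sigma_\theta}=\tfrac{(1+\theta)\mu_y\theta}{2(1-\theta)}$ and $\tfrac{\alpha}{\theta}=\tfrac{\mu_y}{2}$, so the matrix becomes
\[
M_\theta=\begin{bmatrix}\tfrac{\mu_x\theta}{1-\theta}-L_{xx} & 0 & -L_{yx}\\ 0 & \tfrac{(1+\theta)\mu_y\theta}{2(1-\theta)} & -L_{yy}\\ -L_{yx} & -L_{yy} & \tfrac{\mu_y}{2}\end{bmatrix}.
\]
For any $z=(z_1,z_2,z_3)$ one has $z^\top M_\theta z=a_{11}z_1^2+a_{22}z_2^2+a_{33}z_3^2-2L_{yx}z_1z_3-2L_{yy}z_2z_3$, where $a_{ii}$ are the diagonals of $M_\theta$. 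I will apply Young's inequality to each cross term with weights tuned so that the ``$z_3^2$ budget'' $a_{33}=\mu_y/2$ splits equally between the two cross terms: $2L_{yx}|z_1z_3|\le \tfrac{4L_{yx}^2}{\mu_y}z_1^2+\tfrac{\mu_y}{4}z_3^2$ and $2L_{yy}|z_2z_3|\le \tfrac{4L_{yy}^2}{\mu_y}z_2^2+\tfrac{\mu_y}{4}z_3^2$. Substituting yields
\[
z^\top M_\theta z \ \ge\ \Bigl(a_{11}-\tfrac{4L_{yx}^2}{\mu_y}\Bigr)z_1^2+\Bigl(a_{22}-\tfrac{4L_{yy}^2}{\mu_y}\Bigr)z_2^2,
\]
which is nonnegative provided both $a_{11}\ge \tfrac{4L_{yx}^2}{\mu_y}$ and $a_{22}\ge \tfrac{4L_{yy}^2}{\mu_y}$.

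The final step is to verify that these two scalar inequalities are exactly the conditions that define $\hat\theta_1$ and $\hat\theta_2$. The first reduces to $\tfrac{\theta}{1-\theta}\ge \kappa_x+4\kappa_{yx}^2$, which after solving for $\theta$ gives $\theta\ge\hat\theta_1$. The second reduces to $\tfrac{(1+\theta)\theta}{1-\theta}\ge 8\kappa_y^2$; solving the resulting quadratic $\theta^2+(1+8\kappa_y^2)\theta-8\kappa_y^2\ge 0$ and rationalizing the positive root yields precisely $\theta\ge\hat\theta_2$. Hence $\theta\ge\hat\theta=\max\{\hat\theta_1,\hat\theta_2\}$ implies $M_\theta\succeq 0$.

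The only nontrivial step is choosing the right Young decomposition. The naive approach of checking $2\times 2$ principal minors gives $\kappa_x+2\kappa_{yx}^2$ and $\tfrac{(1+\theta)\theta}{1-\theta}\ge 4\kappa_y^2$, which are strictly weaker than what the lemma asserts; the extra factor of two arises because both cross terms compete for the same diagonal entry $a_{33}=\mu_y/2$, and splitting this budget equally (each cross term absorbing $\mu_y/4$) is what reproduces the stated thresholds $\hat\theta_1,\hat\theta_2$.
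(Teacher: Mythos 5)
Your proof is correct and is essentially the paper's own argument in a different notation: the paper writes the matrix as $G_1+G_2$, allotting $\tfrac{\alpha}{2\theta}=\tfrac{\mu_y}{4}$ of the $(3,3)$ entry to each block and checking each block's characteristic polynomial, which is exactly your equal-split Young bound on the quadratic form. The resulting scalar conditions and the derivations of $\hat\theta_1,\hat\theta_2$ coincide with the paper's.
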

\begin{proof}
We show that \sa{when $\theta \in (0,1)$ sufficiently close to $1$, these inequalities hold for the specific parameter choice given in \eqref{eq:SAPD-parameters}, i.e., $\tau= \frac{1-\theta}{\mu_x \theta}$ and $\sigma=\frac{1-\theta}{\mu_y \theta}$ for any $\theta\in[\hat\theta,1)$, satisfies \eqref{cond: MI-2} when $\alpha= \frac{1-\theta}{2\sigma}$.} 
\mg{With this choice of parameters,} it is clear that $\tau$ and $\sigma$ satisfy the first inequality on the left hand-side of \eqref{cond: MI-2}. \mg{It remains to prove that the matrix inequality in \eqref{cond: MI-2} is also satisfied. If we write}
\begin{equation} 
\begin{bmatrix} 
    \frac{1}{\tau}-L_{xx} & 0 & -L_{yx}\\ 
    0 & \frac{1}{\sigma}-\alpha & -L_{yy}\\ 
    -L_{yx} & -L_{yy} & \frac{\alpha}{\theta}
    \end{bmatrix}= \underbrace{\begin{bmatrix} 
    \frac{1}{\tau}- L_{xx} & 0 & -L_{yx} \\ 
    0 & 0& 0\\
    -L_{yx} & 0 & \frac{\alpha}{2\theta}
    \end{bmatrix}}_{G_1}+ \underbrace{\begin{bmatrix} 
    0  & 0 & 0 \\    
    0 & \frac{1}{\sigma} -\alpha & -L_{yy} \\ 
    0 & -L_{yy} & \frac{\alpha}{2\theta}
    \end{bmatrix}}_{G_2},
\end{equation}
\mg{\sa{then} it suffices to show that the eigenvalues of the matrices $G_1$ and $G_2$ are non-negative.} The characteristic function of $G_1$ \mg{can be computed} as 
\begin{align*}
    \det(G_1-\lambda I_3)=-\lambda \left[ \lambda^2 -\lambda\left( \frac{1}{\tau}-L_{xx}+\frac{\alpha}{2\theta}\right)+\frac{\alpha}{2\theta}(\frac{1}{\tau}-L_{xx})-L_{yx}^2 \right],
\end{align*}
\mg{which has 3 roots, one being at $\lambda=0$.}
Notice that $\theta \mg{\geq} \hat{\theta}_1 \geq\frac{1}{1+\frac{\mu_x}{L_{xx}}} $ implies $0 \mg{\leq} \frac{\theta \mu_x}{1-\theta}-L_{xx} = \frac{1}{\tau}-L_{xx}$. Moreover, since $\alpha= \frac{1-\theta}{2\sigma}= \frac{\theta \mu_y}{2}$, we have 
$$
\frac{\alpha}{2\theta}\left( \frac{1}{\tau}-L_{xx}\right)-L_{yx}^2 = \frac{\mu_{y}}{4(1-\theta)}\left[ \theta (\mu_x+L_{xx}+\frac{4L_{yx}^2}{\mu_y})-(L_{xx}+\frac{4L_{yx}^2}{\mu_y})\right],
$$
which is \bc{non-negative} for $\theta \mg{\geq} \hat{\theta}_1$. This implies that the sum and the products of the roots of $\det(G_1-\lambda I_3)$ are positive for \mg{$\theta {\geq} \hat{\theta}_1$}, i.e we have $G_1 \succeq 0$ if $\theta {\geq}
\hat{\theta}_1$. 
 Similarly, we write the characteristic function of $G_2$, 
\begin{align*} 
\det(G_2-\lambda I_3) = -\lambda \left[ \lambda^2 - \lambda (\frac{\alpha}{2\theta}+\frac{1}{\sigma}-\alpha)+\frac{\alpha}{2\theta}(\frac{1}{\sigma}-\alpha)-L_{yy}^{2} \right],
\end{align*}
which has 3 roots, one being at $\lambda=0$. By a straightforward computation \mg{considering the trace of $G_2$}, it follows that the sum of the roots of $\det(G_2-\lambda I_3)$ is positive; therefore, the eigenvalues of $G_2$ are \mg{non-negative if} the following inequality holds 
$$
0 \mg{\leq} \frac{\alpha}{2\theta}(\frac{1}{\sigma}-\alpha)-L_{yy}^2 = \frac{\mu_y^2}{8(1-\theta)}\left[ \theta^2 + \theta \left( 1+\frac{8L_{yy}^2}{\mu_y^2}\right)-\frac{8L_{yy}^2}{\mu_y^2} \right].
$$
The polynomial on the right-side is \mg{non-negative} for $\theta \geq \hat{\theta}_2$ \mg{in which case we have $G_2 \succeq 0$}. This proves that the conditions \eqref{cond: MI-2} hold for $\tau= \frac{1-\theta}{\mu_x \theta}$, $\sigma=\frac{1-\theta}{\mu_y\theta}$ and $\alpha= \frac{1-\theta}{2\sigma}$ as long as $\theta \geq \max\{ 
\hat{\theta}_1, \hat{\theta}_2\}$. This completes the proof.
\end{proof}

\sa{Although some conclusions of [Theorem 2.1,~\cite{zhang2021robust}] requires a compactness assumption on the domain of the objective, 
it is worth emphasizing that the particular result [ineq. (2.3), \cite{zhang2021robust}] 
is still valid even when the domain is unbounded. We will use this result to prove Proposition \ref{prop: conv-of-SAPD}.} For the sake of completeness, we state 
\sa{a simplified version of} [Theorem 2.1., \cite{zhang2021robust}] below which is applicable to \eqref{def: sapd-opt-prob} where the domain is unbounded.
\begin{theorem}[{Adaptation of [Theorem~2.1,~\citep{zhang2021robust}]}]
\label{thm: theorem-7}
Suppose Assumption~\ref{assump: obj-fun-prop} and Assumption~\ref{assump: noise-properties} 
\sa{hold}. 
Let $\{x_k,y_k \}_{k\geq 0}$ \sa{be the SAPD iterate sequence generated according to \eqref{alg: sapd}}, using $\tau,\sigma>0$ and \sa{$\theta \in (0,1)$ that satisfy \eqref{cond: MI-2}}
for some \sa{$\alpha \in (0,\frac{1}{\sigma})$}. 
Then the following inequality holds for any $x_0 \in \mathbb{R}^{d_x}$ and $y_0 \in \mathbb{R}^{d_y}$, 
\begin{multline}\label{ineq: lyapunov-decay}
\mathbb{E}\left[\frac{1}{2\tau}\Vert x_N-x_*\Vert^2+ \frac{1-\alpha\sigma}{2\sigma}\Vert y_N-y_*\Vert^2\right] \leq \frac{\sa{\theta}^{N}}{2} \left( \frac{1}{\tau}\Vert x_0-x_*\Vert^2 + \frac{1}{\sigma}\Vert y_0 - y_*\Vert^2 \right) + \frac{\theta (1-\theta^N)}{1-\theta} \sa{\tilde\Xi_{\tau,\sigma,\theta}\delta_{(2)}^2},
\end{multline}
where the constant \sa{$\tilde\Xi_{\tau,\sigma,\theta}\triangleq\tau+2\sigma\big((1+\theta)^2+\theta^2\big)$.} 
\end{theorem}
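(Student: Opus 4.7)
The plan is to establish a one-step Lyapunov-type contraction
\begin{equation*}
\mathbb{E}[\Phi_{k+1}]\;\leq\;\theta\,\mathbb{E}[\Phi_k]\;+\;\theta\,\tilde\Xi_{\tau,\sigma,\theta}\,\delta_{(2)}^2,
\end{equation*}
for a suitable potential $\Phi_k$ that dominates the left-hand side of \eqref{ineq: lyapunov-decay} and reduces to $\tfrac{1}{2\tau}\Vert x_0-x_*\Vert^2+\tfrac{1}{2\sigma}\Vert y_0-y_*\Vert^2$ at $k=0$. Iterating this inequality and summing the geometric series $\sum_{k=0}^{N-1}\theta^{N-k}=\theta(1-\theta^N)/(1-\theta)$ immediately yields~\eqref{ineq: lyapunov-decay}.

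First, I apply the three-point identity to both SAPD updates. For the primal step, $x_{k+1}=x_k-\tau\tn_x f(x_k,y_{k+1})$ gives
\begin{equation*}
\tfrac{1}{2\tau}\Vert x_{k+1}-x_*\Vert^2-\tfrac{1}{2\tau}\Vert x_k-x_*\Vert^2+\tfrac{1}{2\tau}\Vert x_{k+1}-x_k\Vert^2 \;=\; -\langle \tn_x f(x_k,y_{k+1}),\,x_{k+1}-x_*\rangle,
\end{equation*}
with the symmetric dual identity producing $\langle \tilde q_k,\, y_{k+1}-y_*\rangle$ on the right. I then split $\tn=\nabla+w$ with $w$ zero-mean conditioned on the past. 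Taking conditional expectations kills the first-order noise cross terms by Assumption~\ref{assump: noise-properties}, while the second-order contributions bounded via $\mathbb{E}[\Vert w\Vert^2]\leq\delta_{(2)}^2$ produce the additive primal noise $\tfrac{\tau}{2}\delta_{(2)}^2$ and, after expanding $\Vert\tilde q_k\Vert^2$ using independence of $\{w_k^y\}$ across $k$, the dual noise $\sigma\big((1+\theta)^2+\theta^2\big)\delta_{(2)}^2$; together these assemble into $\tilde\Xi_{\tau,\sigma,\theta}\delta_{(2)}^2$.

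The deterministic gradient pieces are handled by the descent-lemma manipulation: $L_{xx}$-smoothness and $\mu_x$-strong convexity of $f(\cdot,y_{k+1})$ give
\begin{equation*}
-\langle \nabla_x f(x_k,y_{k+1}),x_{k+1}-x_*\rangle \;\leq\; -\big(f(x_{k+1},y_{k+1})-f(x_*,y_{k+1})\big)-\tfrac{\mu_x}{2}\Vert x_{k+1}-x_*\Vert^2+\tfrac{L_{xx}}{2}\Vert x_{k+1}-x_k\Vert^2,
\end{equation*}
with the analogous concavity bound on the dual side. The crucial momentum term $\theta\langle\nabla_y f(x_k,y_k)-\nabla_y f(x_{k-1},y_{k-1}),y_{k+1}-y_*\rangle$ is split into a telescoping piece $\theta\langle\nabla_y f(x_k,y_k)-\nabla_y f(x_{k-1},y_{k-1}),y_k-y_*\rangle$, which is absorbed into the next iterate of the potential, plus a residual $\theta\langle\nabla_y f(x_k,y_k)-\nabla_y f(x_{k-1},y_{k-1}),y_{k+1}-y_k\rangle$. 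Cauchy--Schwarz, the Lipschitz bound of Assumption~\ref{assump: obj-fun-prop} and Young's inequality with weight $\alpha/\theta$ convert this residual into quadratic terms in $\Vert x_k-x_{k-1}\Vert$, $\Vert y_k-y_{k-1}\Vert$ and $\Vert y_{k+1}-y_k\Vert$ whose coefficients match exactly the entries of the $3\times 3$ matrix in \eqref{cond: MI-2}.

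Collecting all quadratic terms, the residual quadratic form is non-negative by the matrix inequality in \eqref{cond: MI-2} and is dropped. The saddle-point gap $f(x_{k+1},y_*)-f(x_*,y_{k+1})\geq 0$ assembled from the convexity/concavity bounds is also non-negative and dropped. The remaining $\tfrac{\mu_x}{2}\Vert x_{k+1}-x_*\Vert^2+\tfrac{\mu_y}{2}\Vert y_{k+1}-y_*\Vert^2$ combines with the scalar condition $\min\{\tau\mu_x,\sigma\mu_y\}\geq (1-\theta)/\theta$ in \eqref{cond: MI-2} to upgrade the coefficients of the squared distances at step $k+1$ from $1$ to $1/\theta$, yielding the contraction factor $\theta$. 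The main obstacle I anticipate is the coefficient matching: one must choose the Young's inequality weight precisely so that the $(3,3)$ entry of the residual matrix is the slack $\alpha/\theta$ appearing in \eqref{cond: MI-2}, and one must route the stale gradient $\theta\langle\nabla_y f(x_{k-1},y_{k-1}),\cdot\rangle$ into the correct telescoping component of $\Phi_k$. A secondary subtlety is the boundary at $k=0$, where $\tn_y f(x_{-1},y_{-1})$ is undefined; the cleanest fix is to set $\tn_y f(x_{-1},y_{-1})\equiv\tn_y f(x_0,y_0)$ so that the initial momentum contribution vanishes and $\Phi_0$ reduces to $\tfrac{1}{2\tau}\Vert x_0-x_*\Vert^2+\tfrac{1}{2\sigma}\Vert y_0-y_*\Vert^2$ as required.
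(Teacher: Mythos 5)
Your proposal is correct in substance, but it is a genuinely different route from the paper's own proof, which does not re-derive the one-step contraction at all: the paper imports the master inequality wholesale from the proof of Theorem~2.1 in \cite{zhang2021robust} (their (A.17), (A.28), (A.29), specialized at $x=x_*$, $y=y_*$, with $\rho^{-N+1}\Delta_N(x^*,y^*)$ added to both sides), obtaining
\[
\frac{1}{2\theta\tau}\Vert x_N-x_*\Vert^2+\frac{1-\alpha\sigma}{2\theta\sigma}\Vert y_N-y_*\Vert^2\;\leq\;\theta^{N-1}\Big(\frac{1}{2\tau}\Vert x_0-x_*\Vert^2+\frac{1}{2\sigma}\Vert y_0-y_*\Vert^2+\sum_{k=0}^{N-1}\theta^{-k}F_k\Big),
\]
so that the only new work in the paper is the bound $\mathbb{E}[F_k]\leq\tilde\Xi_{\tau,\sigma,\theta}\delta_{(2)}^2$. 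Your noise computation coincides with that part almost verbatim, and is in fact slightly sharper: by exploiting independence of the dual noises at steps $k$ and $k-1$ you obtain $\sigma\big((1+\theta)^2+\theta^2\big)\delta_{(2)}^2$, whereas the paper uses $(a+b)^2\leq 2(a^2+b^2)$ and pays the factor $2$ baked into $\tilde\Xi_{\tau,\sigma,\theta}$ (your smaller constant is of course still dominated by $\tilde\Xi_{\tau,\sigma,\theta}\delta_{(2)}^2$, so the stated bound follows a fortiori). What your route buys is a self-contained argument, including the coefficient bookkeeping that the paper outsources to the certificate \eqref{cond: MI-2}; what it costs is that you must actually carry that bookkeeping out, and your sketch has two soft spots there.

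First, the claim that ``taking conditional expectations kills the first-order noise cross terms'' is literally false as written: in $-\langle w_k^x,\,x_{k+1}-x_*\rangle$ the iterate $x_{k+1}$ depends on $w_k^x$ (and $y_{k+1}$ depends on the dual noise inside $\tilde q_k$), so these cross terms are not conditionally zero-mean. One must first substitute $x_{k+1}=x_k-\tau\tn_x f(x_k,y_{k+1})$ and $y_{k+1}=y_k+\sigma\tilde q_k$, after which the surviving pieces $\langle w_k^x,x_k-x_*\rangle$ and $\tau\langle w_k^x,\nabla_x f(x_k,y_{k+1})\rangle$ do vanish under Assumption~\ref{assump: noise-properties}, leaving the quadratic terms $\tau\Vert w_k^x\Vert^2$ and $\sigma\Vert\tilde q_k-q_k\Vert^2$; this is exactly the add-and-subtract step the paper performs when computing $\mathbb{E}[F_k]$ from \eqref{eq:Fk}, and your $\tau$- and $\sigma$-weighted noise constants suggest you intended it, but the stated mechanism needs this correction. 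Second, and more substantively, you never derive the coefficient $(1-\alpha\sigma)/(2\sigma)$ on $\Vert y_N-y_*\Vert^2$: a potential $\Phi_k$ cannot simultaneously dominate the left-hand side of \eqref{ineq: lyapunov-decay} at every $k$ and reduce to $\frac{1}{2\tau}\Vert x_0-x_*\Vert^2+\frac{1}{2\sigma}\Vert y_0-y_*\Vert^2$ at $k=0$, because the natural $\Phi_k$ contains a signed stale-gradient inner product. The $(1-\alpha\sigma)$ factor arises only at the \emph{terminal} iterate, where the $\alpha$-slack in the $(2,2)$ entry of \eqref{cond: MI-2} is spent (via Young's inequality) to absorb that inner product into $\frac{\alpha}{2}\Vert y_N-y_*\Vert^2$ --- the counterpart of adding $\rho^{-N+1}\Delta_N(x^*,y^*)$ in \cite{zhang2021robust}. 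This terminal-step argument is the one genuinely missing step in your outline; with it and the noise correction above, your proof goes through.
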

\begin{proof}
\sa{The result immediately follows from [Theorem 2.1,~\cite{zhang2021robust}] and [Remark 2.4,~\cite{zhang2021robust}].
More precisely, in the proof of [Theorem 2.1,~\cite{zhang2021robust}], since $(x^*,y^*)$ is a saddle point, setting $x=x^*$ and $y=y^*$ in [(A.17), (A.28) and (A.29),~\cite{zhang2021robust}]
and adding $\rho^{-N+1}\Delta_N(x^*,y^*)$ 
to both sides implies that 
\begin{equation}\label{ineq: var-bound-sapd}
\frac{1}{2\theta \tau}\Vert x_N-x_*\Vert^2+ \frac{1-\alpha \sigma}{2\theta \sigma}\Vert y_N-y_*\Vert^2 \leq \theta^{N-1} \left(\frac{1}{2\tau}\Vert x_0-x_*\Vert^2 + \frac{1}{2\sigma}\Vert y_0-y_*\Vert^2 + \sum_{k=0}^{N-1}\theta^{-k} F_k\right),
\end{equation}
where
\begin{equation}
\label{eq:Fk}
F_k\triangleq -\fprod{\tn_xf(x_k,{y}_{k+1})-\nabla_x f(x_k,{y}_{k+1}),~ x_{k}-x_* -\tau \tn_x f(x_k,{y}_{k+1})}+\fprod{\tilde{q}_k-q_k,~y_{k+1}-y_*},
\end{equation}
with \begin{equation} {q}_k\triangleq (1+\theta)\grad_y f(x_k,y_k)-\theta \grad_y f(x_{k-1},y_{k-1}).
\label{def-qk}
\end{equation}
Using the fact that $y_{k+1}=y_k+\sigma\tilde q_k$ 
\mg{and by adding the term $\tau \tn_xf(x_k,{y}_{k+1})$ to the dot product in \eqref{eq:Fk}} and subtracting the same term, we obtain
\begin{align*}
    \mE[F_k]&= -\mE\left[\fprod{\tn_xf(x_k,{y}_{k+1})-\nabla_x f(x_k,{y}_{k+1}),~ x_{k}-x_* -\tau \nabla_x f(x_k,{y}_{k+1})}\right]\\
    &\quad\quad+\tau \mE\left[\Vert \tn_xf(x_k,{y}_{k+1})-\nabla_x f(x_k,{y}_{k+1}) \Vert^2\right] +\mE\left[\fprod{\tilde{q}_k-q_k,~y_{k}-y_*}\right] + \sigma \mE\left[\Vert \tilde{q}_k-q_k\Vert^2\right].
\end{align*}
\bc{By Assumption \ref{assump: noise-properties}, the estimates $\tn_x f(x_k, y_{k+1})$ and $\tilde{q}_k$ are unbiased estimates for $\nabla_x f(x_k, y_{k+1})$ and $q_k$ and are independent from the iterates $(x_k,y_k)$; therefore, the expectation of the inner products above are zero. This leads to}, 
\begin{equation}\label{eq:EFk-bound}
    \mE[F_k]=\tau \mE[\Vert \tn_xf(x_k,{y}_{k+1})-\nabla_x f(x_k,{y}_{k+1}) \Vert^2]+\sigma \mE[\Vert \tilde{q}_k - q_k \Vert^2].
\end{equation}
Furthermore, we can bound the last term using the inequality $(a+b)^2\leq 2(a^2+b^2)$ as follows:
{\small
\begin{align*}
\mE[\Vert \tilde{q}_k-q_k\Vert^2]&=\mE[\Vert (1+\theta)(\tn_yf(x_k,y_k)-\nabla_y f(x_k,y_k))-\theta (\tn_y f(x_{k-1},y_{k-1})-\nabla_y f(x_{k-1},y_{k-1}))\Vert^2] \\
&\leq 2\left((1+\theta)^2 \mE[\Vert\tn_yf(x_k,y_k)-\nabla_y f(x_k,y_k)\Vert^2 +\theta^2 \Vert  \tn_y f(x_{k-1},y_{k-1})-\nabla_y f(x_{k-1},y_{k-1})\Vert^2]\right)\\
&=2\big((1+\theta)^2+\theta^2\big)\delta_{(2)}^2.
\end{align*}}%
Thus,
\begin{align}
\label{eq:Fk-bound}
\mE[\bc{F_k}]\leq \Big(\tau+2\sigma\big((1+\theta)^2+\theta^2\big)\Big)\delta_{(2)}^2,\quad \forall~k\geq 0.
\end{align}
Combining this bound with \eqref{ineq: var-bound-sapd} gives the desired result.}
\end{proof}
\sa{The proof of Proposition~\ref{prop: conv-of-SAPD} immediately follows from Lemma~\ref{lem:MI-cond} and Theorem~\ref{thm: theorem-7}.}
\subsection{Proof of Theorem \ref{thm: inv-meas-exists}} 
The proof is based on the Harris' ergodic theorem for Markov chains. Particularly, Hairer and Mattingly have shown in \cite{hairer2011yet} that if the following Condition \ref{cond: drift} and Condition \ref{cond: minorization} hold\footnote{This conditions are also known as the drift and minorization conditions.} for a transition kernel $\bcred{\mathcal{R}^{(\tau,\sigma,\theta)}}$ of a Markov chain \bcred{$\{\xi_k^{(\tau,\sigma,\theta)}\}\in\mathbb{R}^{2d}$}, then the Markov chain admits a unique invariant measure (Theorem 2.1 of \cite{hairer2011yet}). \bcred{Throughout the proof, we drop $\theta$ dependency on the notation and set $\xi_k^{(\tau,\sigma,\theta)}\rightarrow \xi_{k}$ to denote the Markov chain $\{\xi_k^{(\tau,\sigma,\theta)}\}$ generated by SAPD, for simplicity.} 

\begin{condition}\label{cond: drift}
There exists a function $\mathcal{V}:\mathbb{R}^{2d}\rightarrow [0,\infty]$ and constants $K\geq 0$ and $\zeta \in (0,1)$ such that 
$$
(\mcRtdt\mathcal{V})(\xi) \leq \zeta \mathcal{V}(\xi) + K,\quad\forall~\xi \in \mathbb{R}^{2d}.
$$
\end{condition}
\begin{condition}\label{cond: minorization} 
There exists a constant $\varphi \in (0,1)$ and a probability measure $\nu$ \sa{on $\mathcal{B}(\mathbb{R}^{2d})$} such that 
$$ 
\inf_{\xi\in \mathcal{C}_R} \mcRtdt(\xi, A) \geq \varphi \nu(A),\quad\forall~A\in \mathcal{B}(\mathbb{R}^{2d}),
$$
for some $R> 2K/(1-\mg{\zeta})$, where $\mathcal{C}_{R}\sa{\triangleq}\{\xi \in\mathbb{R}^{2d} : \mg{\mathcal{V}}(\xi)\leq R\}$, $K$ and \mg{$\zeta$} are the constants from Condition \ref{cond: drift}. 
\end{condition}
Our proof relies on showing that the Markov kernel associated with the SAPD iterations satisfies Condition \ref{cond: drift} and Condition \ref{cond: minorization} \mg{with appropriate constants $R$, $K$ and $\zeta$}; which will imply that the Markov chain admits a unique invariant measure \mg{based on \cite{hairer2011yet}}. Firstly, we show the Condition \ref{cond: drift} is satisfied for $\mcRtdt$ 
\sa{using 
Theorem~\ref{thm: theorem-7}.} 

We first observe that the inequality \eqref{ineq: lyapunov-decay} is valid for any initialization $(x_0, y_0)$. 
Secondly, this inequality holds if the expectation is conditioned on the previous iterate since the noise on the gradient is independent from the iterates. Consequently, we obtain the following inequalities:
{\small
\begin{subequations}
\label{ineq: drift-ineqs}
\begin{align}
\mathbb{E}\left[\frac{1}{2\tau}\Vert x_2-x_*\Vert^2 + \frac{1-\alpha\sigma}{2\sigma}\Vert y_2-y_*\Vert^2 \mid (x_1,y_1)\right] \leq \frac{\sa{\theta}}{2} \left[ \sa{\frac{1}{\tau}}\Vert x_1-x_*\Vert^2 + \sa{\frac{1}{\sigma}}\Vert y_1-y_*\Vert^2\right]+\sa{\theta~\tilde\Xi_{\tau,\sigma,\theta}\delta_{(2)}^2},\\
\mathbb{E}\left[\frac{1}{2\tau}\Vert x_1-x_*\Vert^2 + \frac{1-\alpha\sigma}{2\sigma}\Vert y_1-y_*\Vert^2\mid (x_0, y_0) \right] \leq \frac{\sa{\theta}}{2} \left[ \sa{\frac{1}{\tau}}\Vert x_0-x_*\Vert^2 + \sa{\frac{1}{\sigma}}\Vert y_0-y_*\Vert^2\right]+\sa{\theta~\tilde\Xi_{\tau,\sigma,\theta}\delta_{(2)}^2}.
\end{align}
\end{subequations}}%
In Lemma \ref{lem: drift-cond}, we provide the conditions for which $\mcRtdt$ satisfies Condition \ref{cond: drift}.
\begin{lemma}\label{lem: drift-cond}
\sa{Suppose Assumptions \ref{assump: obj-fun-prop} and \ref{assump: noise-properties} hold and the SAPD parameters $\tau,\sigma>0$ and $\theta\in(0,1)$ satisfy 
the matrix inequality in~\eqref{cond: MI-2} for some $\alpha \in (0, \frac{1}{\sigma})$ such that $1-\alpha \sigma >\theta$.} 
Let
\begin{equation*}
    \gV(\xi_k)\triangleq\frac{1}{4\tau}(\Vert x_k-x_*\Vert^2+\Vert x_{k-1}-x_*\Vert^2)+ \frac{1-\alpha\sigma}{4\sigma} (\Vert y_k-y_*\Vert^2 + \Vert y_{k-1}-y_*\Vert^2).
\end{equation*}
Then the Markov kernel $\mcRtdt$ of SAPD satisfies the following inequality for any $\xi \in \mathbb{R}^{2d}$,
\begin{equation}\label{ineq: drift}
    (\mcRtdt \gV)(\xi) \leq \mg{\zeta~\gV(\xi) + K},
\end{equation}
\mg{with the constants \sa{$\zeta$ and $K$ chosen as}
\begin{equation} \zeta = \sa{\frac{\theta}{1-\alpha\sigma}}, \quad K = \sa{\theta~\tilde\Xi_{\tau,\sigma,\theta}\delta_{(2)}^2}.
\end{equation}
}
\end{lemma}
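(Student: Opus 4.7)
My plan is to derive the drift inequality by specializing Theorem~\ref{thm: theorem-7} to a single step ($N=1$) and then repackaging the resulting moment bound in terms of the joint-state Lyapunov $\gV$. I would first observe that the matrix inequality \eqref{cond: MI-2} assumed in the lemma is exactly the hypothesis of Theorem~\ref{thm: theorem-7}, so that result applies to the SAPD iterates with the given parameters. Since the gradient noise introduced at each step is, by Assumption~\ref{assump: noise-properties}, independent of the current state $\xi_0 = (z_0,z_{-1})$, the one-step moment bound in \eqref{ineq: drift-ineqs}(b) holds with the conditional expectation given $\xi_0$ on the left-hand side for any initialization.

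Next, I would introduce the shorthand $A_k \triangleq \tfrac{1}{4\tau}\|x_k-x_*\|^2 + \tfrac{1-\alpha\sigma}{4\sigma}\|y_k-y_*\|^2$, so that $\gV(\xi_k) = A_k + A_{k-1}$. Using the identities $\tfrac{1}{\sigma} = \tfrac{1}{1-\alpha\sigma}\cdot\tfrac{1-\alpha\sigma}{\sigma}$ and $\tfrac{1}{\tau} \leq \tfrac{1}{(1-\alpha\sigma)\tau}$ (valid because $1-\alpha\sigma<1$), the right-hand side of \eqref{ineq: drift-ineqs}(b) can be upper-bounded by $2\zeta A_0$ with $\zeta = \theta/(1-\alpha\sigma)$, which converts the bound into
\[
\mathbb{E}[A_1 \mid \xi_0] \leq \zeta A_0 + \tfrac{1}{2}\theta\tilde\Xi_{\tau,\sigma,\theta}\delta_{(2)}^2.
\]
Crucially, the hypothesis $1-\alpha\sigma > \theta$ guarantees $\zeta < 1$, which is essential both for the eventual drift to be a genuine contraction and for the forthcoming bookkeeping.

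Finally, since the lag component of $\gV(\xi_1)$ equals $A_0$ and is $\xi_0$-measurable, we have $\mathbb{E}[\gV(\xi_1)\mid\xi_0] = \mathbb{E}[A_1\mid\xi_0] + A_0$. Combining this identity with the preceding moment bound and using the decomposition $\gV(\xi_0) = A_0 + A_{-1}$ together with the non-negativity of $A_{-1}$ would produce the claimed inequality $(\mcRtdt\gV)(\xi_0) \leq \zeta\gV(\xi_0) + K$ with $K = \theta\tilde\Xi_{\tau,\sigma,\theta}\delta_{(2)}^2$. The main analytic difficulty I anticipate is exactly this last repackaging step: the moment bound naturally produces a coefficient $1+\zeta$ in front of $A_0$, so the residual $A_0$ contribution coming from the lag term of $\gV(\xi_1)$ must be absorbed into the drift carefully, exploiting the slack provided by $\zeta < 1$ and the positivity of $A_{-1}$, in order to recover the compact form claimed in the lemma.
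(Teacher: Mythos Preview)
Your overall strategy coincides with the paper's: derive the drift from the one-step version of Theorem~\ref{thm: theorem-7} and repackage in terms of $\gV$. You correctly locate the difficult step, but the absorption you propose cannot deliver the stated constants. From $\mathbb{E}[A_1\mid\xi_0]\le \zeta A_0 + K/2$ you get $\mathbb{E}[\gV(\xi_1)\mid\xi_0]\le(1+\zeta)A_0+K/2$, and matching this against $\zeta\,\gV(\xi_0)+K=\zeta A_0+\zeta A_{-1}+K$ forces $A_0\le \zeta A_{-1}+K/2$. Taking $\xi_0=(z_0,z_*)$ gives $A_{-1}=0$, so you would need $A_0\le K/2$; this fails whenever $z_0$ is far from $z_*$. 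Neither $\zeta<1$ nor $A_{-1}\ge 0$ provides any leverage here, so the repackaging step genuinely fails as you have set it up.

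The paper handles the lag differently. Working with the transition $\xi_1\mapsto\xi_2$ and writing $B_k\triangleq\tfrac{1}{4\tau}\|x_k-x_*\|^2+\tfrac{1}{4\sigma}\|y_k-y_*\|^2$, it invokes \emph{both} one-step bounds in \eqref{ineq: drift-ineqs}: inequality~(a) controls $\mathbb{E}[A_2\mid\xi_1]$ by $\theta B_1+K/2$, while inequality~(b) is used to control the lag term $A_1$ by $\theta B_0+K/2$. Adding gives $\mathbb{E}[\gV(\xi_2)\mid\xi_1]\le\theta(B_1+B_0)+K$, and since $\theta B_k\le\tfrac{\theta}{1-\alpha\sigma}A_k=\zeta A_k$ (using $1-\alpha\sigma<1$), the right side is at most $\zeta\,\gV(\xi_1)+K$. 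The missing idea in your argument is that the lag contribution must be bounded in terms of the \emph{other component} $z_0$ of the \emph{current} state $\xi_1$, not pushed back onto $A_{-1}$ from the previous state $\xi_0$; only then do both pieces of $\gV(\xi_1)=A_1+A_0$ appear on the right with coefficient $\zeta$.
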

\begin{proof}
\sa{The desired result 
immediately follows from the inequalities in~\eqref{ineq: drift-ineqs}. Indeed, we have} 
\begin{align*}
&\mathbb{E}[\gV(\xi_2)| \xi_1]=\mathbb{E}[\frac{1}{4\tau}\left(\Vert x_2-x_*\Vert^2 + \Vert x_1-x_*\Vert^2\right)+\frac{1-\alpha\sigma}{4\sigma}\left(\Vert y_2-y_*\Vert^2+\Vert y_1-y_*\Vert^2\right) \mid \xi_1 ] \\
&\quad\quad\leq \frac{\theta}{4}\left[\sa{\frac{1}{\tau}}(\Vert x_1-x_*\Vert^2 + \Vert x_0-x_*\Vert^2)+ \sa{\frac{1}{\sigma}}(\Vert y_1-y_*\Vert^2+\Vert y_0-y_*\Vert^2)\right] + \sa{\theta~\tilde\Xi_{\tau,\sigma,\theta}\delta_{(2)}^2}. 
\end{align*}
Thus, \sa{since $1-\alpha\sigma\in(0,1)$,} we obtain 
$$
\mathbb{E}[\gV(\xi_2) \mid \xi_1] \leq \sa{\frac{\theta}{1-\alpha\sigma}} \gV(\xi_1) + \sa{\theta~\tilde\Xi_{\tau,\sigma,\theta}\delta_{(2)}^2}.
$$
Because the noise is stationary, the Markov chain is time-homogeneous; hence, the operator $\mcRtdt$ is the same throughout the iterations which gives the desired result. 
\end{proof}

\mg{Based on Lemma \ref{lem: drift-cond}}, \bc{in order to show Condition \ref{cond: minorization}, it is sufficient to show that \mg{given $\varphi \in (0,1)$}, there exists a measure \sa{$\tilde{\nu}$} 
\sa{defined on the Borel $\sigma$-algebra $\mathcal{B}(\mathbb{R}^{d})$} satisfying the inequality
\begin{equation}\label{eq: minorization-nu}
\inf_{\xi\in\mathbb{R}^{2d}} \sa{\{p(\xi, B):\ \gV(\xi)\leq R\}} \geq \varphi~\sa{\tilde{\nu}(B)},\qquad\forall~ B \in \mathcal{B}(\mathbb{R}^{d}),
\end{equation}
for some \sa{$$R>2\theta\delta_{(2)}^2 \tilde\Xi_{\tau,\sigma,\theta}/\left(1-\frac{\theta}{1-\alpha\sigma}\right).$$}%
\mg{This is because if such a measure $\tilde\nu$ 
\sa{defined on $\mathcal{B}(\mathbb{R}^{d})$} exists, then we can always define a measure $\nu$ 
\sa{on $\mathcal{B}(\mathbb{R}^{2d})$} satisfying Condition \ref{cond: minorization} \mg{based on the product measure}.}
More specifically,}
any Borel set in $\mathcal{B}(\mathbb{R}^{2d})$ is of the form $B\times C$ where $B, C\in\mathcal{B}(\mathbb{R}^{d})$ are Borel sets in $\mathbb{R}^d$. Introducing the set 
\sa{$$\cD_R\triangleq\left\{ z\in\mathbb{R}^{d} : \exists z_p  \in\mathbb{R}^{d}\quad \mbox{s.t.}\quad  \gV(\xi) \mg{\leq} R\quad  \mbox{for}\quad \xi=[z^\top z_p^\top]^\top \right\}.$$} 
It can be seen that if \eqref{eq: minorization-nu} holds for \sa{$\tilde\nu$},
then we can define the measure
$$
\mg{\nu(B\times C)\sa{\triangleq}\begin{cases}
\sa{\tilde\nu(B)} & \text{ if } C \mg{\subseteq} \sa{\cD_R}\\
0 & \text{otherwise},
\end{cases}
}
$$ 
which will satisfy the minorization condition, i.e., Condition~\ref{cond: minorization}, on the Markov kernel of $\{\xi_k\}$.

In the next lemma, 
we show that if the distribution function is continuous then for any given constant $\varphi\in(0,1)$, we can find a probability measure $\tilde{\nu}$ and a set of the form $\{\xi \in\mathbb{R}^{2d} | \gV(\xi)\leq R\}$ \sa{for some $R>0$} on which the inequality $p(\xi, A)\geq \varphi \sa{\tilde{\nu}(A)}$ is satisfied for any $A\in\mathcal{B}(\mathbb{R}^d)$.

\begin{lemma}\label{lem: minorization}
Under Assumption \ref{assump-cont-density}, for any $\varphi\in (0,1)$ \mg{fixed}, there exists a positive constant $R>0$ and \mg{a probability measure \sa{$\tilde\nu$} on \sa{$\cB(\mathbb{R}^d)$}} such that for every $z \in \mathbb{R}^d$,
\begin{equation}
\inf_{\xi \in\mathbb{R}^{2d}} \{p(\xi,z):\ \sa{\gV(\xi)\leq R}\} \geq \varphi \tilde\nu(z).
\label{ineq-minorization}
\end{equation} 
\end{lemma}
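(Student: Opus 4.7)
My plan is to construct $\tilde\nu$ as a normalization of the pointwise infimum of $p(\xi,\cdot)$ over the sublevel set $K_R := \{\xi \in \mathbb{R}^{2d} : \gV(\xi) \leq R\}$, and then to choose $R$ small enough that the normalizing constant exceeds $\varphi$. Concretely, I would set
$$q_R(z) \triangleq \inf_{\xi \in K_R} p(\xi,z), \qquad c_R \triangleq \int_{\mathbb{R}^d} q_R(z)\,dz, \qquad \tilde\nu(dz) \triangleq \frac{q_R(z)}{c_R}\,dz,$$
and verify that $c_R \to 1$ as $R \to 0$, so that for any prescribed $\varphi \in (0,1)$ a sufficiently small $R$ will make $c_R \geq \varphi$.

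First I would observe that $\gV$, as defined in Lemma \ref{lem: drift-cond}, is continuous, nonnegative, coercive in $\xi$, and vanishes exactly at the point $\xi_* := (z_*, z_*)$; hence $K_R$ is compact for every $R > 0$, and $K_R \downarrow \{\xi_*\}$ as $R \downarrow 0$. Combined with the joint continuity of $p$ from Assumption \ref{assump-cont-density}, this guarantees that the infimum defining $q_R(z)$ is attained and that $q_R$ is continuous in $z$ (hence Borel measurable) by the standard maximum-theorem-type argument. The bound $q_R(z) \leq p(\xi,z)$ for any $\xi \in K_R$ yields $c_R \leq 1$, so after normalization $\tilde\nu$ is a bona fide probability measure whenever $c_R > 0$.

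The key step is showing that $c_R$ can be made at least $\varphi$ by shrinking $R$. Given $\varepsilon \in (0, 1-\varphi)$, I would first choose a compact ball $B \subset \mathbb{R}^d$ with $\int_B p(\xi_*, z)\,dz \geq 1 - \varepsilon/2$, which exists because $p(\xi_*,\cdot)$ is a probability density. On the compact product $\overline{B(\xi_*,1)} \times B$ the continuous function $p$ is uniformly continuous, so there exists $\delta \in (0,1)$ with $|p(\xi,z) - p(\xi_*,z)| < \varepsilon/(2|B|)$ whenever $\|\xi - \xi_*\| \leq \delta$ and $z \in B$, where $|B|$ denotes Lebesgue measure. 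Choosing $R > 0$ small enough that $K_R \subset \overline{B(\xi_*,\delta)}$ then gives $q_R(z) \geq p(\xi_*,z) - \varepsilon/(2|B|)$ on $B$, and integrating yields
$$c_R \;\geq\; \int_B q_R(z)\,dz \;\geq\; \Big(1 - \tfrac{\varepsilon}{2}\Big) - \tfrac{\varepsilon}{2} \;=\; 1 - \varepsilon \;\geq\; \varphi.$$
For every $\xi \in K_R$ and every $z \in \mathbb{R}^d$ we then conclude $p(\xi,z) \geq q_R(z) = c_R\,\tilde\nu(z) \geq \varphi\,\tilde\nu(z)$, which is the desired minorization.

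The main delicate point will be handling the non-compactness of $\mathbb{R}^d$ when passing from pointwise closeness of $p(\xi,\cdot)$ to $p(\xi_*,\cdot)$ to closeness of their integrals; a naive pointwise bound does not directly yield control on $c_R$. The tightness-type choice of the truncation ball $B$, together with the fact that $p(\xi_*,\cdot)$ is a probability density, is what circumvents this obstacle; the remaining ingredients (compactness of $K_R$, joint continuity of $p$, uniform continuity on compacts) are routine.
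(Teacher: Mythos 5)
Your proof is correct, and the minorizing measure you build is genuinely different from the paper's. The paper takes $\tilde\nu$ to be the density $p(\xi_*,\cdot)$ truncated to a ball $\{z:\|z-z_*\|\le M\}$ and normalized, then splits the constant as $\varphi=\sqrt{\varphi}\cdot\sqrt{\varphi}$: one factor $\sqrt{\varphi}$ absorbs the normalization (by choosing $M$ so the ball carries mass at least $\sqrt{\varphi}$ under $p(\xi_*,\cdot)$), and the other comes from requiring $p(\xi,z)\ge\sqrt{\varphi}\,p(\xi_*,z)$ uniformly over the sublevel set $\{\gV\le R\}$, with $R$ defined as the supremal radius for which this holds. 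You instead normalize the untruncated infimum density $q_R(z)=\inf_{\xi\in K_R}p(\xi,z)$ and show $c_R=\int q_R\to 1$ as $R\downarrow 0$ via tightness of $p(\xi_*,\cdot)$ plus uniform continuity on compacts; the minorization then drops out as $p(\xi,z)\ge q_R(z)=c_R\tilde\nu(z)\ge\varphi\tilde\nu(z)$ with no square-root bookkeeping. Both arguments rest on the same two ingredients (a truncation ball in $z$ and continuity of $p$ near $\xi_*$), but yours makes explicit the compactness of $K_R$, the shrinkage $K_R\downarrow\{\xi_*\}$, and the measurability of $q_R$ via the maximum theorem — details the paper leaves implicit in its inf/sup definitions of $M$ and $R$ (in particular, that those defining sets are nonempty). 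The one trade-off worth noting: your argument produces a \emph{small} $R$, whereas the paper's $R$ is the largest admissible radius for its $\tilde\nu$. The lemma as stated only asks for some $R>0$, so you are fine here, but downstream (Condition~\ref{cond: minorization} and Proposition~\ref{prop: gen-inv-meas-exist}) one needs $R>2K/(1-\zeta)$, so a version of your argument that tracks how large $R$ can be taken would be needed to close the loop with the drift condition.
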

\begin{proof} \mg{We consider the probability measure}
$$
\tilde\nu(z) \triangleq p(\xi_*,z)\frac{1_{\Vert z-z_*\Vert \leq M}}{\int_{\{\sa{z:}~\Vert z-z_*\Vert\leq M\}}p(\xi_*, z)dz},
$$
where $1_{\Vert z-z_*\Vert\leq M}=1$ if $\Vert z-z_*\Vert \leq M$ and 0 otherwise. 
Suppose  $M>0$ is chosen such that the denominator is not zero. Notice that for sufficiently large $M$, the integral at the denominator can be arbitrarily close to 1. We will show that for any $\varphi \in (0,1)$, there exists positive constants $M$ and $R$ such that \eqref{ineq-minorization} 
holds for every $z$ for this choice of $\tilde\nu$. \mg{In particular, given $\varphi\in(0,1)$, we simply take
$$
M \triangleq  \inf \left\{ m>0: \int_{\sa{z:}~\Vert z-z_*\Vert\leq m}p(\xi_*,z)dz \geq \sqrt{\varphi} \right\},
$$
and 
$$
R \triangleq  \sup \left\{ r>0 : \inf_{\xi\in\mathbb{R}^{2d}:~\gV(\xi)\leq r} p(\xi,z)\geq \sqrt{\varphi}p(\xi_*,z),\quad\sa{\forall~z:}~\Vert z-z_*\Vert \leq M \right\},
$$
then, by definition of $M$ and $R$, it can be seen that 
for every $z$ satisfying $\Vert z-z_*\Vert \leq M$,
\begin{equation}
\label{eq:p-bound}
\inf_{\xi \in \mathbb{R}^{2d}:~\gV(\xi)\leq R} p(\xi, z) \geq \varphi \bc{\tilde\nu}(z); 
\end{equation} 
otherwise, for \sa{any $z$ such that} $\Vert z-z_*\Vert > M$, we have \sa{$\tilde\nu(z) = 0$}, \sa{and \eqref{eq:p-bound}} holds trivially. This implies \eqref{ineq-minorization} and we conclude.
}
\end{proof}
The Proposition \ref{prop: gen-inv-meas-exist} utilizes the results obtained at Lemma \ref{lem: drift-cond} and Lemma \ref{lem: minorization} in order to prove the existence of the invariant measure.

\begin{proposition}\label{prop: gen-inv-meas-exist}
\sa{Under the premise of} Lemma \ref{lem: drift-cond}, \sa{suppose  $\tau,\sigma>0$ and $\theta\in(0,1)$ satisfy 
the matrix inequality in~\eqref{cond: MI-2} for some $\alpha \in (0, \frac{1}{\sigma})$ such that $1-\alpha \sigma >\theta$}. \sa{Let $\varphi \in (0,1)$ and $M>0$ be such that $\int_{z:\Vert z-z_*\Vert <M} p(\xi_*,z)dz\geq\sqrt{\varphi}$} and $R>0$ be such that 
\begin{align}
\inf_{\xi\in\mathbb{R}^{2d}, z\in\mathbb{R}^{d}} \left\{\frac{p(\xi,z)}{p(\xi_*,z)}:\ \sa{\gV(\xi)\bcred{\leq}R,\quad \Vert z-z_*\Vert\leq M}\right\}>\sqrt{\varphi}. 
\end{align}
\sa{If the SAPD parameters $(\tau,\sigma,\theta)$ and the noise variance $\delta_{(2)}^2$ satisfy}
\begin{equation}\label{cond: param-and-variance}
2\sa{\delta_{(2)}^2\theta \tilde\Xi_{\tau,\sigma,\theta}\left(1-\frac{\theta}{1-\alpha\sigma}\right)^{-1}}\bcred{\leq} R,
\end{equation}
where 
$\tilde\Xi_{\tau,\sigma,\theta}$ is defined in Lemma~\ref{lem: drift-cond}, then the SAPD iterates starting from initialization $\xi_0 \in \mathbb{R}^{2d}$ admit a 
unique invariant measure 
$\ptdt_*$, \sa{i.e., 
the distribution of $\xi_k$, $\lambda_{\xi_0}\mcR_k^{(\theta)}$, converges to $\sa{\pi^{(\theta)}_*}$ where $\lambda_{\xi_0}$ denotes the Dirac distribution at $\xi_0$}. Moreover, there exists $C>0$ such that
 \begin{equation}
 \Vert \mcRtdt_k \psi - \ptdt_*(\psi)\Vert \leq C 
 \sa{\left(\frac{\theta}{1-\alpha\sigma}\right)^k} \Vert \psi - \ptdt_*(\psi)\Vert,
 \label{eq-conv-rate-invariant}
 \end{equation}
for every measurable function $\psi$ such that $\Vert \psi \Vert <\infty$, where $\Vert \psi \Vert = \sup_{\xi} \frac{|\psi(\xi)|}{1+\gV(\xi)}$ is the weighted supremum norm. 
\end{proposition}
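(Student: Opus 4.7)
The plan is to verify the hypotheses of the Harris-type ergodic theorem from Hairer--Mattingly (Theorem~2.1 of~\cite{hairer2011yet}), namely Condition~\ref{cond: drift} and Condition~\ref{cond: minorization}, for the Markov kernel $\mcR^{(\theta)}$ associated with the SAPD iterates $\{\xi_k^{(\tau,\sigma,\theta)}\}$ on $(\mathbb{R}^{2d},\mathcal{B}(\mathbb{R}^{2d}))$, with Lyapunov function $\gV$ defined in Lemma~\ref{lem: drift-cond}. The assumption that $\tau,\sigma,\theta$ satisfy the matrix inequality in~\eqref{cond: MI-2} with $1-\alpha\sigma>\theta$ feeds directly into Lemma~\ref{lem: drift-cond} to give the drift bound $(\mcR^{(\theta)}\gV)(\xi)\leq \zeta\,\gV(\xi)+K$ with $\zeta=\theta/(1-\alpha\sigma)\in(0,1)$ and $K=\theta\,\tilde\Xi_{\tau,\sigma,\theta}\delta_{(2)}^2$. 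The hypothesis~\eqref{cond: param-and-variance} is precisely $R\geq 2K/(1-\zeta)$, so the sublevel set $\mathcal{C}_R=\{\gV\leq R\}$ has the size required by Condition~\ref{cond: minorization}.

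Next, I would establish the minorization. Lemma~\ref{lem: minorization} already provides, for an arbitrary $\varphi\in(0,1)$, a probability measure $\tilde\nu$ on $\mathcal{B}(\mathbb{R}^d)$ together with $M,R>0$ such that for every $\xi$ with $\gV(\xi)\leq R$ the one-step transition density on $\mathbb{R}^d$ satisfies $p(\xi,z)\geq \varphi\,\tilde\nu(z)$ for all $z$; this is exactly what the hypotheses on $\varphi,M,R$ encode. The only remaining subtlety is to lift this minorization from the ``new coordinate'' $z_{k+1}\in\mathbb{R}^d$ to the full state $\xi_{k+1}=(z_{k+1},z_k)\in\mathbb{R}^{2d}$: the map $\xi_k\mapsto \xi_{k+1}$ is random only in its first block while the second block is deterministically equal to the first block of $\xi_k$. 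Following the construction sketched after Lemma~\ref{lem: minorization}, I would define the reference measure $\nu$ on $\mathcal{B}(\mathbb{R}^{2d})$ on product sets $B\times C$ by $\nu(B\times C)=\tilde\nu(B)$ when $C\subseteq\mathcal{D}_R$ and $0$ otherwise, extending by Carath\'eodory. Since $\xi\in\mathcal{C}_R$ implies that the first block of $\xi$ lies in $\mathcal{D}_R$, we get $\mcR^{(\theta)}(\xi,B\times C)\geq \varphi\,\nu(B\times C)$ uniformly over $\xi\in\mathcal{C}_R$, which is Condition~\ref{cond: minorization}.

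With both conditions in hand, Theorem~2.1 of~\cite{hairer2011yet} immediately yields the existence and uniqueness of an invariant probability measure $\pi_*^{(\theta)}$ together with a geometric rate of convergence in the $\gV$-weighted total-variation / supremum norm; this gives~\eqref{eq-conv-rate-invariant} with rate $\zeta=\theta/(1-\alpha\sigma)$ for some constant $C>0$ that depends only on $\zeta,K,R,\varphi$. Since convergence in the weighted supremum norm dominates convergence in distribution for initial conditions that are Dirac masses, the statement that $\lambda_{\xi_0}\mcR_k^{(\theta)}\to\pi_*^{(\theta)}$ follows.

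The main obstacle is the one-step degeneracy of the kernel noted above: because the second block of $\xi_{k+1}$ is a deterministic copy of the first block of $\xi_k$, the one-step kernel $\mcR^{(\theta)}(\xi,\cdot)$ is singular with respect to Lebesgue measure on $\mathbb{R}^{2d}$, so the minorization must be carried out carefully against a reference $\nu$ that respects this singularity. The construction of $\nu$ via $\mathcal{D}_R$ handles this, but one must verify that the resulting inequality $\mcR^{(\theta)}(\xi,A)\geq\varphi\,\nu(A)$ genuinely holds for all $A\in\mathcal{B}(\mathbb{R}^{2d})$ by approximating measurable sets by finite unions of product rectangles and invoking the monotone class / $\pi$-$\lambda$ argument. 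Aside from this bookkeeping step, the proof is essentially an application of the already-established lemmas and the cited Harris theorem.
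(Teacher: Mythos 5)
Your proposal follows the paper's proof essentially verbatim: the drift condition is taken from Lemma~\ref{lem: drift-cond} with $\zeta=\theta/(1-\alpha\sigma)$ and $K=\theta\,\tilde\Xi_{\tau,\sigma,\theta}\delta_{(2)}^2$, the hypothesis \eqref{cond: param-and-variance} is read as $R\geq 2K/(1-\zeta)$, the minorization is supplied by Lemma~\ref{lem: minorization} lifted to $\mathbb{R}^{2d}$ via the product construction over $\cD_R$, and the conclusion is an invocation of the Hairer--Mattingly theorem. One caveat on the degeneracy you rightly flag: the set function $\nu(B\times C)=\tilde\nu(B)\,\mathbf{1}_{\{C\subseteq\cD_R\}}$ (used both by you and by the paper) is not finitely additive in $C$, and no fixed measure charging sets $B\times C$ with small $C$ can minorize the one-step kernel, since the second block of $\xi_{k+1}$ equals the first block of $\xi_k$ and varies over $\cC_R$ --- the standard repair is a two-step (skeleton) minorization, which the Hairer--Mattingly framework accommodates; as this soft spot is inherited from the paper itself, it does not distinguish your argument from theirs.
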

\begin{proof}
\sa{Since $1-\alpha\sigma>\theta$, from Lemma~\ref{lem: drift-cond}, we get {\bf Condition 1}, i.e.,}
$$
\mcRtdt \gV(\xi) \leq \sa{\frac{\theta}{1-\alpha\sigma}} \gV(\xi) +  \sa{\theta\tilde\Xi_{\tau,\sigma,\theta}\delta_{(2)}^2}.
$$
Therefore, when \eqref{cond: param-and-variance} holds, Lemma~\ref{lem: minorization} implies {\bf Condition 2}, i.e.,
$$
\inf_{\xi \in \mathbb{R}^{2d}} \{p(\xi, z):\ \gV(\xi)\leq R\} \geq \varphi \bc{\tilde{\nu}}(z). 
$$
Therefore, we can conclude from [Theorem 1.2., \cite{hairer2011yet}] that there exists an invariant measure $\ptdt_*$ to which $\mcRtdt_k$ converges \mg{according to \eqref{eq-conv-rate-invariant}}.
\end{proof}
\begin{remark}
\bc{In Proposition \ref{prop: gen-inv-meas-exist}, the constant $C$ is not explicitly available. However, explicit constants can be provided if the convergence is analyzed in a different metric. In particular, if the 
parameters satisfy the inequality 
$1>\frac{\theta}{1-\alpha\sigma}+\frac{2\theta \tilde\Xi_{\tau,\sigma,\theta}\delta_{(2)}^2}{R}\triangleq \gamma_{\sa{R}}^{\tau,\delta,\theta}$, then 
for any $\varphi_0\in (0,\varphi)$,
$$
\bcred{\vertiii{\mcRtdt_k \psi - \ptdt_*(\psi)} \leq \bar{\rho}~\vertiii{\psi - \ptdt_*(\psi)},}
$$
\sa{holds with $\bar{\rho}\sa{\triangleq}1-\min\left\{ \varphi-\varphi_0,\  \frac{R\beta}{2+R\beta}(1-\gamma^{\tau,\delta,\theta}_{R})\right\}$ and} $\beta=\frac{\varphi_0}{\theta \tilde\Xi_{\tau,\delta,\sigma}\delta^{2}_{(2)}}$ for every measurable function $\psi$ 
\sa{such that} \bcred{$||| \psi||| <\infty$}, where \bcred{$\vertiii{\psi}\triangleq\inf_{c\in\mathbb{R}}\Vert \psi+c\Vert_\beta$} and $ \Vert \psi\Vert_\beta \triangleq\sup_{\xi}\frac{|\psi(\xi)|}{1+\beta V(\xi)}$ is the $\beta$-weighted supremum norm.
}
\end{remark}

Lastly, notice the the choice of parameters \sa{given in \eqref{eq:SAPD-parameters} together with $\alpha=\frac{1-\theta}{2\sigma}$} satisfy the condition $1-\alpha \sigma = 1-\frac{1-\theta}{2}\geq \theta$. Therefore, we conclude that the assumption of Lemma \ref{lem: drift-cond} holds with $\Xi_{\theta}\triangleq \Xi_{\tau_\theta,\sigma_\theta,\theta}$; hence, Theorem \ref{thm: expansion-at-inv-meas-mean} directly follows from the Proposition \ref{prop: gen-inv-meas-exist}.
\subsection{Proof of Lemma \ref{lem: finite-moment}} 
Throughout the proof, $\mE[\Vert z_k-z_*\Vert^p]$ and $\mE[\Vert z-z_*\Vert^p]$ denote the 
\sa{$p$-th moments of $z_k-z_*$ when $z_k\sim\lambda\mcR_k$, where $z_0\sim\lambda$ and $z_{-1}\triangleq[x_{-1}^\top~y_{-1}^\top]^\top\sim\lambda$ for $\lambda$ being the initial distribution, and the $p$-th moment of $z-z_*$ when $z\sim \pi_*$ the invariant measure,} respectively. 

We first recall \sa{the inequality in \eqref{ineq: var-bound-sapd} from the proof of Theorem \ref{thm: theorem-7}, i.e.,}
\begin{equation}\label{ineq: bound-on-variance}
\frac{1}{2\theta \tau}\Vert x_N-x_*\Vert^2+ \frac{1-\alpha \sigma}{2\theta \sigma}\Vert y_N-y_*\Vert^2 \leq \theta^{N-1} \left(\frac{1}{2\tau}\Vert x_0-x_*\Vert^2 + \frac{1}{2\sigma}\Vert y_0-y_*\Vert^2 + \sum_{k=0}^{N-1}\theta^{-k} F_k\right),
\end{equation}
where $F_k$ is defined in \eqref{eq:Fk}.

\sa{This inequality holds for all $N\geq 1$ whenever $\tau,\sigma>0$ and \sa{$\theta \in (0,1)$ satisfy \eqref{cond: MI-2}
for some $\alpha \in [0,\frac{1}{\sigma}]$.}
Since Proposition~\ref{prop: conv-of-SAPD} shows that choosing $(\tau,\sigma,\theta)$ as in \eqref{eq:SAPD-parameters} and $\alpha=\frac{1-\theta}{2\sa{\sigma}}<\frac{1}{\sigma}$ satisfies \eqref{cond: MI-2}, we can conclude that for all $N\geq 1$, \eqref{ineq: bound-on-variance} is valid for $(\tau,\sigma,\theta)$ as in \eqref{eq:SAPD-parameters} and $\alpha=\frac{1-\theta}{2\sa{\sigma}}$, which yields the following inequality  for $N=1$,}  
\begin{align}
\label{eq:simple-recursion-1}
\frac{ \mu_x}{2}\Vert x_1-x_*\Vert^2 + \frac{\mu_y(1+\theta)}{4} \Vert y_1-y_*\Vert^2 \leq  \frac{\mu_x\theta}{2} \Vert x_0-x_*\Vert^2 + \frac{\mu_y \theta }{2}\Vert y_0-y_*\Vert^2 +  (1-\theta) F_0.
\end{align}
Notice that the inequality $\theta< \frac{2\theta}{1+\theta}$ holds for all $\theta<1$; \sa{therefore, \eqref{eq:simple-recursion-1} implies that} 
\begin{equation}\label{ineq: bound-on-var-2} 
\frac{\mu_x}{2}\Vert x_1-x_*\Vert^2 + \frac{\mu_y(1+\theta)}{4} \Vert y_1-y_*\Vert^2 \leq  \left( \frac{2\theta}{1+\theta}\right)\left[\frac{\mu_x}{2} \Vert x_0-x_*\Vert^2 + \frac{\mu_y (1+\theta) }{4}\Vert y_0-y_*\Vert^2\right] + (1-\theta) F_0.
\end{equation}
\sa{The selection of the parameters $\theta\in (\hat\theta,1)$ and $\tau,\sigma>0$ in \eqref{eq:SAPD-parameters} imply that $\tau=\mathcal{O}(1-\theta)$ and $\sigma=\mathcal{O}(1-\theta)$.
Furthermore,} Assumption~\ref{assump: noise-properties} \sa{says} that the gradient estimates are \sa{conditionally} independent from the sequence $\{x_k,y_k\}$ generated by SAPD; therefore, if we take the expectation of $F_0$, 
\sa{it follows from \eqref{eq:Fk-bound} that} 
\begin{align}
\label{eq:F0-bound}
    \mE[\mg{F_0}]\leq \Big(\tau+2\sigma\big((1+\theta)^2+\theta^2\big)\Big)\delta_{(2)}^2=\mathcal{O}\Big((1-\theta)\delta_{(2)}^2\Big),
\end{align}
with the convention that \sa{$z_0\sim\pi_*$ and $z_{-1}\sim\pi_*$ are random variables drawn from invariant measure $\pi_*$. 
} 
\sa{Since $\pi_{*}$ is the invariant measure, \mg{we have $z_1 \sim \pi_*$ as well and therefore}  $\mE[\Vert x_1-x_*\Vert^2]= \mE[\Vert x_0-x_*\Vert^2]$ and $\mE[\Vert y_1-y_*\Vert^2]= \mE[\Vert y_0-y_*\Vert^2]$; moreover, Theorem~\ref{thm: theorem-7} implies that these quantities are finite.}
\sa{Let $\bar{\mu}\triangleq\min\{\mu_x,\mu_y\}$}. Hence, taking expectation of both sides of \eqref{ineq: bound-on-var-2} with respect to the invariant measure \sa{$\pi_*$ and using \eqref{eq:F0-bound}}
yields the desired result  
\begin{align}
\label{eq:moment2-bound}
\mE[\Vert z-z_*\Vert ^2 ] \leq \mg{\frac{4}{\bar{\mu}}}
\mE[F_0] = \sa{\mathcal{O}\Big((1-\theta)\delta_{(2)}^2\Big)}.
\end{align}
Next, we will show $\mE[\Vert z-z_*\Vert^4]=\mathcal{O}\left((1-\theta)^2\right)$. \mg{For this purpose}, we are going to bound $|F_0|$. Let $\tilde{y}_k\triangleq y_{k+1}$, \sa{and consider $F_k$ and $q_k$ defined in~\eqref{eq:Fk} and \eqref{def-qk}.} Notice that the Cauchy-Schwarz and triangular inequalities yield for any $k\geq 0$,
\begin{align}\label{ineq: Fk-bounds}
    |F_k| &\leq \Vert \tn_x f(x_k,\tilde{y}_k)-\nabla_x f(x_k,\tilde{y}_k)\Vert \Vert x_k-x_*-\tau \nabla_x f(x_k,\tilde{y}_k)\Vert +\tau \Vert \tn_x f(x_k,\tilde{y}_k)-\nabla_x f(x_k,\tilde{y}_k)\Vert^2\nonumber\\
    &\quad\quad\quad\quad\quad\quad\quad\quad +|(\tilde{q}_k-q_k)^\top (y_k-y_*+\sigma q_k)|+ \sigma\Vert \tilde{q}_k-q_k\Vert^2 \nonumber\\
    &\leq \Vert \tn_x f(x_k,\tilde{y}_k)-\nabla_x f(x_k,\tilde{y}_k)\Vert\Vert x_k-x_*\Vert+ \tau \Vert \tn_xf(x_k,\tilde{y}_k)-\nabla_x f(x_k,\tilde{y}_k)\Vert\Vert \nabla_x f(x_k,\tilde{y}_k)\Vert\nonumber\\
    &\quad\quad  + 
    \sa{\tau}\Vert \tn_x f(x_k,\tilde{y}_k)-\nabla_x f(x_k,\tilde{y}_k)\Vert^2 + \Vert \tilde{q}_k-q_k\Vert \Vert y_k-y_*\Vert +\sigma \Vert \tilde{q}_k-q_k\Vert \Vert q_k\Vert + \sigma \Vert \tilde{q}_k-q_k\Vert^2. 
\end{align}
\sa{Next, we derive some bounds that will be used later to upper bound $\mathbb{E}[|F_0|^2]$. 
First, using $\nabla f(x_*,y_*)=\mathbf{0}$ and \mg{the definition of $q_k$ from \eqref{def-qk}}, we get}
\begin{small}
\begin{align}
\label{eq:qk-bound}
\Vert q_k\Vert &= \norm{(1+\theta)\big(\grad_y f(x_k,y_k)-\sa{\nabla_y f(x_*,y_*)}\big)-\theta \big(\grad_y f(x_{k-1},y_{k-1})-\sa{\nabla_y f(x_*,y_*)}\big)} {\bc{=}}\mathcal{O}\left( \max_{i\in \{k, k-1\}}\Vert z_i-z_*\Vert \right),
\end{align}
\end{small}
\sa{where we used Assumption~\ref{assump: obj-fun-prop} for the \bc{last} \mg{equality}. Furthermore, recalling the definition of $\tilde{q}_k$ from \eqref{alg: sapd}, we also have}
\begin{small}
\begin{align}
\label{eq:tqk_qk-bound}
\Vert \tilde{q}_k-q_k\Vert &=\mathcal{O}\left(\max_{i\in\{k, k-1\}} \{ \Vert \tn_y f(x_i,y_i)- \nabla_y f(x_i,y_i)\Vert\}\right),
\end{align}
\end{small}%
\mg{together with the fact that $y_{k+1}=y_k+\sigma\tilde q_k$} \sa{implies that}
\begin{small}
\begin{align*}
    \Vert \tilde{y}_k - y_*\Vert &\leq \Vert y_k -y_* +\sigma q_k \Vert +\sigma \Vert \tilde{q}_k- q_k\Vert\\ &
    {=} \mathcal{O}\left(\max_{i\in \{k,k-1\}}\{\Vert z_i-z_*\Vert\}\right)+\sa{\sigma}\mathcal{O}\left(\max_{i\in \{k,k-1\}}\{ \Vert \tn_y f(x_i,y_i)-\nabla_y f(x_i,y_i)\Vert\}\right),
\end{align*}
\end{small}%
\mg{where we used \eqref{eq:qk-bound}. Consequently,} \sa{we also get}
\begin{small}
\begin{align}
\label{eq:gradxf_xk_tyk}
    \Vert\nabla_x f(x_k,\tilde{y}_k)\Vert &=\Vert \nabla_x f(x_k,\tilde{y}_k)-\nabla_x f(x_*,y_*)\Vert\nonumber\\
    &= \mathcal{O}\left(\max_{i\in\{k,k-1\}}\{\Vert z_i-z_*\Vert\}\right)+\sa{\sigma}\mathcal{O}\left( \max_{i\in \{k,k-1\}}\{ \Vert \tn_y f(x_i,y_i)-\nabla_y f(x_i,y_i)\Vert\}\right),
\end{align}
\end{small}
\sa{where we again used Assumption~\ref{assump: obj-fun-prop} in the last equality.}

\sa{Suppose the distribution of $\xi_0=(z_0,z_{-1})$ is the stationary distribution of the Markov chain, where $z_{0}=(x_{0},y_{0})$ and $z_{-1}=(x_{-1},y_{-1})$. Setting $k=0$ in \eqref{ineq: Fk-bounds} and using the inequality $(\sum_{i=1}^N\gamma_i)^2\leq N\sum_{i=1}^N\gamma_i^2$ for any $\{\gamma_i\}_{i=1}^N\in\reals^N$, we can bound $\mathbb{E}[|F_0|^2]$ based on the tower law of expectation and the fact that gradient noise is conditionally independent given the gradient arguments as follows:} 
\begin{multline*}
    \mE[|F_0|^2] \leq 6\delta_{(2)}^{2}\mE[\Vert x_0-x_*\Vert^2]+ 6\tau^2 \delta_{(2)}^2\mE[\Vert \nabla_x f(x_0,\tilde{y}_0)\Vert^2]+6 \sa{\tau^2} 
    \delta_{(4)}^4 \\
    + 6 \bc{(1+\theta)^2}\delta_{(2)}^{2} \mE[\Vert y_0-y_*\Vert^2] + 6\sigma^2 \bc{(1+\theta)^2}\delta_{(2)}^{2} \mE[\Vert q_0\Vert^2] + 6\sigma^2 \delta_{(4)}^{4}.
\end{multline*}

\sa{Thus, combining the terms, we get
\begin{align*}
    \mE[|F_0|^2] \leq 6\left( \mE[\Vert z-z_*\Vert^2]\delta_{(2)}^{2}+\tau^2 \mE[\Vert \nabla_x f(x_0,\tilde{y}_0)\Vert^2]\delta_{(2)}^2+\sigma^2 \mE[\Vert q_0\Vert^2]\delta_{(2)}^{2}+(\tau^2+\sigma^2)\delta_{(4)}^{4}\right).
\end{align*}}%
\sa{Using \eqref{eq:F0-bound}, \eqref{eq:qk-bound} and \eqref{eq:gradxf_xk_tyk} together with $\tau=\cO(1-\theta)$ and $\sigma=\cO(1-\theta)$, we get}
\begin{align*}
    \mE[|F_0|^2] = \mathcal{O}\left( \mE[\Vert z-z_*\Vert^2] \right)+\mathcal{O}\left( (1-\theta)^2\right)+\mathcal{O}\left((1-\theta)^2 \mE[\Vert z-z_*\Vert^2]\right)= \mathcal{O}(1-\theta),
\end{align*}
after neglecting the terms with 
\sa{second or} higher orders of $(1-\theta)$ as $\theta \rightarrow 1$.

\mg{On the other hand, }\sa{for $\tau= \frac{1-\theta}{\theta\mu_x}$, $\delta = \frac{1-\theta}{\theta \mu_y}$, $\alpha = \frac{1-\theta}{2\delta}$, and $\bar{\mu}=\min\{\mu_x, \mu_y\}$, \eqref{ineq: bound-on-variance} implies that} 
\begin{align}\label{ineq: bound-on-var-3}
    \frac{\bar{\mu}}{2}\Vert z_N-z_*\Vert^2 \leq \theta^{N} \left( \mu_x \Vert x_0-x_*\Vert^2 + \mu_y\Vert y_0-y_*\Vert^2\right)+ 2\theta^{N-1}(1-\theta)\sum_{k=0}^{N-1}\theta^{-k}F_k,
\end{align} 
\sa{for all $N>0$.} 
Therefore taking the square of both sides of the inequality above yields 
\begin{align*}
\frac{\bar{\mu}^2}{4} \Vert z_N-z_*\Vert^4 &\leq 2\theta^{2N}\left( \mu_x \Vert x_0-x_*\Vert^2 + \mu_y \Vert y_0 - y_*\Vert^2\right)^2 + 4 \theta^{2N-2} (1-\theta)^2\left( \sum_{k=0}^{N-1}\theta^{-k}F_k \right)^2.
\end{align*}
We can use Cauchy-Schwarz inequality \sa{to get} $(\sum_{k=0}^{N-1}\theta^{-k}F_k)^2 \leq \left( \sum_{k=0}^{N-1}\theta^{-k}\right)\left(\sum_{k=0}^{N-1}\theta^{-k}F_k^2\right)$ \sa{implying}
$$
\frac{\bar{\mu}^2}{4} \mE[\Vert z_N-z_*\Vert^4]\leq 2\theta^{2N}\left( \mu_x \Vert x_0-x_*\Vert^2 + \mu_y \Vert y_0 - y_*\Vert^2\right)^2+ 4 
\sa{\theta^N}(1-\theta) \sum_{k=0}^{N-1}\theta^{-k}\mE[F_k^2].
$$
This inequality holds for every initialization $(x_0,y_0)$. Notice that the inequality \eqref{ineq: Fk-bounds} holds 
\sa{in a.s. sense} for any $k\geq0$; hence, \sa{\eqref{eq:gradxf_xk_tyk} implies that} $\mE[|F_k|^2]=\mathcal{O}(\mE[\max_{i\in\{k,k-1\}}\Vert z_i-z_*\Vert^2])$.  Therefore, if we assume the initial distribution of the Markov chain $\{x_k,y_k\}_{k\in\mathbb{N}}$ is Dirac measure at $(x_0,y_0)$ \sa{and $(x_{-1},y_{-1})=(x_0,y_0)$}, then taking the expectation of both sides gives the following inequality for each $N\geq 0$, 
\begin{align*}
\frac{\bar{\mu}^2}{4} \mE[\Vert z_N-z_*\Vert^4]= 2\theta^{2N} \left( \mu_x \Vert x_0-x_*\Vert^2 + \mu_y \Vert y_0 - y_*\Vert^2\right)^2+ 4\sa{\theta}(1-\theta^{N}) \mathcal{O}(\max_{\sa{k\in \{0,\ldots,N-1\}}}\mE[\Vert z_k-z_*\Vert^2]).
\end{align*}
On the other hand, we know that $\{\mE[\Vert z_k-z_*\Vert^2]\}_{k\in \mathbb{N}}$ is a uniformly bounded sequence from Theorem \ref{thm: theorem-7}; therefore, Theorem \ref{thm: inv-meas-exists} suggests $\mE[\Vert z_N-z_*\Vert^4]\rightarrow \mE[\Vert \bcred{z_\infty}-z_*\Vert^4]$ as $N\rightarrow \infty$ \sa{with $\bcred{z_\infty}$ being distributed according to the stationary distribution}. This also implies 
$\mE[\Vert \bcred{z_\infty} -z_* \Vert^4]$ is bounded. In the rest of the proof, \sa{to show $\mE[\Vert \bc{z_\infty} -z_*\Vert^4]=\mathcal{O}((1-\theta)^2)$, we follow a similar approach we used for deriving a bound on the 
2-nd moment of $z-z_*$ when $z\sim\pi_*$ follows 
the stationary distribution, i.e., $\mE[\Vert \bcred{z_\infty} -z_* \Vert^2]$.}

Firstly, we define the function $V(\sa{z})\triangleq\frac{\mu_x}{2}\Vert \sa{x}-x_*\Vert^2 + \frac{\mu_y(1+\theta)}{
\sa{4}}\Vert \sa{y}-y_*\Vert^2$ 
\sa{for $z=(x,y)\in\reals^d$,} which satisfies the following inequality \sa{due to} \eqref{eq:simple-recursion-1}:
$$
V^2(z_1)\leq \left(\frac{2\theta}{1+\theta}\right)^2 V^2(z_0) + \frac{4\theta (1-\theta)}{1+\theta} V(z_0)F_0+ (1-\theta)^2F^2_0.
$$
Let $z_0$ be drawn \sa{from the stationary distribution} $\pi_*$, then we have $\mE[V^2(z_1)]=\mE[V^2(z_0)]$ since $\pi_*$ is the invariant measure for $\{z_\bcred{k}\}_{k\in\mathbb{N}}$. Notice that $\mE[V(z)]=\mathcal{O}(\mE[\Vert z-z_*\Vert^2])$ which is bounded by the arguments above; hence, we obtain,
\begin{align}
\label{eq:V2-bound}
\mE[V^2(\sa{z_0})] \left(\frac{1+3\theta}{1+\theta}\right) \leq  4\theta \mE[V(\sa{z_0})F_0] + (1-\theta^2)\mE[F_0^2].
\end{align}
\sa{As we obtained \eqref{eq:EFk-bound}, we can also get}
\begin{align*}
\mE[F_0 | z_0]&=-\mE[\fprod{\tn_x f(\sa{x_0},\sa{y_{1}})-\nabla_x f(x_0,y_{1}),~x_0-x_*-\tau \tn_x f(x_0,y_{1})} | z_0] + \mE[\fprod{\tilde{q}_0-q_0, y_{1}-y_0}|z_0]\\
&= \tau \mE[\Vert \tn_x f(z_0,y_1)-\nabla_x f(z_0,y_1)\Vert^2|z_0] + \sigma \mE[\Vert \tilde{q}_0-q_0\Vert^2 | z_0] = 
\mathcal{O}\left((1-\theta)\delta_{(2)}^2\right),
\end{align*}
\sa{which follows from Assumption \ref{assump: noise-properties} and \eqref{eq:tqk_qk-bound}.}
Moreover, we \sa{also} have $\mE[V(z_0)]=\mathcal{O}\left(\mE[\Vert z_0-z_*\Vert^2] \right)\sa{=\cO(1-\theta)}$. Thus, 
$$
\mE[V(z_0)F_0] = \mE[V(z_0)\mE[F_0|z_0]] = \mathcal{O}\left((1-\theta)\delta_{(2)}^2\right)\mE[V(z_0)]=\mathcal{O}\left((1-\theta)^2 \delta_{(2)}^{2}\right).
$$
\sa{Since we have already shown $\mE[|F_0|^2]=\mathcal{O}(1-\theta)$; the result directly follows from \eqref{eq:V2-bound}, i.e., when $z\sim\pi_*$, it holds that}
\begin{align}
\label{eq:moment4}
\sa{\mE[\Vert z-z_*\Vert^4]=\cO(\mE[V^2(z)])}=\mathcal{O}\left( (1-\theta)^2\right).
\end{align}

\subsection{Proof of Theorem \ref{thm: expansion-at-inv-meas-mean}}
\mg{By Theorem \ref{thm: inv-meas-exists}, the distribution of the iterates $\xi_k^{(\theta)}$ converges to the invariant measure $\pi_*^{(\theta)}$. In particular, if we take  $\psi(\xi) = \xi$ in Theorem \ref{thm: inv-meas-exists}, then it follows that
 $$ \bar{\xi}^{(\theta)} = \lim_{k}  \mcR_k^{(\theta)}\psi = \lim_{k}\mathbb{E} [\xi_k^{(\theta)}] = \mathbb{E}[\xi_\infty],$$
where $\xi_\infty$ is a random variable distributed according to the invariant measure $\pi_*^{(\theta)}$. Therefore, for computing $\bar{\xi}^{(\theta)}$, without loss of generality, we can assume that the initialization $\xi_1$ is drawn from $\pi_*$; in which case $\xi_k^{(\theta)}$ will also be distributed according to $\pi_*$ for every $k\geq 1$.}
\begin{lemma}\label{lem: char-mean-z}
Consider the setting of Theorem \ref{thm: expansion-at-inv-meas-mean}. Suppose the initialization $\xi_1$ is distributed according to invariant measure $\pi_{*}^{(\theta)}$. Then, we have
\begin{equation}\label{eq: mean-of-z}
    \mE[z_1-z_*]=\bcred{-\frac{1}{2}} [\nabla^{(2)}f_*]^{-1} \nabla^{(3)}f_* \mE[(z_{1}-z_*)^{\otimes 2}]+\sa{\cO\left((1-\theta)^{3/2}\right)}.
\end{equation}
\end{lemma}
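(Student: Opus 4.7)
The plan is to exploit stationarity of $\{\xi_k^{(\theta)}\}$ to derive an approximate first-order optimality condition on $\mE[z-z_*]$ in terms of $\mE[(z-z_*)^{\otimes 2}]$, and then invert the Hessian at $z_*$ to solve for $\mE[z-z_*]$. The moment bounds from Lemma~\ref{lem: finite-moment} will be used to control the Taylor remainder.

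First, I will extract the stationarity conditions from the SAPD updates \eqref{alg: sapd}. Taking expectations on both sides under $\pi_*^{(\theta)}$, using that the gradient noise is unbiased and independent of the iterates (Assumption~\ref{assump: noise-properties}), and that $\mE[z_k]$ is independent of $k$ at stationarity, the $y$-update gives $\mE[\tilde q_k]=\mE[\nabla_y f(z_1)]=0$, and the $x$-update gives $\mE[\nabla_x f(x_1, y_{1}+\sigma\tilde q_1)]=0$. The first identity is an exact condition on the full distribution of $\nabla_y f$ at $z_1$; the second involves the coupled point $(x_1,y_{1}+\sigma\tilde q_1)$ rather than $z_1$ itself.

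Next, I will replace the coupled point in the $x$-equation by $z_1$ at the cost of an $\mathcal{O}((1-\theta)^{3/2})$ error. A first-order Taylor expansion of $\nabla_x f$ in the $y$-argument yields
\begin{align*}
\nabla_x f(x_1,y_1+\sigma\tilde q_1)=\nabla_x f(z_1)+\sigma\,\nabla_{xy}^2 f(z_1)\tilde q_1+\mathcal{O}(\sigma^2\|\tilde q_1\|^2).
\end{align*}
Using $\mE[\tilde q_1\mid z_1]=\nabla_y f(z_1)$, Assumption~\ref{assump: obj-fun-prop} to bound $\|\nabla_{xy}^2 f\|$ and $\|\nabla_y f(z_1)\|\leq L\|z_1-z_*\|$, and the Cauchy--Schwarz inequality with Lemma~\ref{lem: finite-moment} ($\mE[\|z_1-z_*\|^2]=\mathcal{O}(1-\theta)$, $\mE[\|z_1-z_*\|^4]=\mathcal{O}((1-\theta)^2)$), I obtain $\sigma\,\mE[\nabla_{xy}^2 f(z_1)\nabla_y f(z_1)]=\mathcal{O}((1-\theta)^{3/2})$ since $\sigma=\mathcal{O}(1-\theta)$ and $\mE[\|z_1-z_*\|]=\mathcal{O}(\sqrt{1-\theta})$. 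A similar calculation, using $\mE[\|\tilde q_1\|^2]=\mathcal{O}(\mE[\|z_1-z_*\|^2]+\delta_{(2)}^2)$, controls the quadratic remainder by $\mathcal{O}(\sigma^2)=\mathcal{O}((1-\theta)^2)$. Combining with the $y$-equation, the joint stationarity reads $\mE[\nabla f(z_1)]=\mathcal{O}((1-\theta)^{3/2})$.

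Finally, I will expand $\nabla f$ around $z_*$. Since $\nabla f(z_*)=0$ and the fourth derivatives are uniformly bounded,
\begin{align*}
\nabla f(z_1)=\nabla^{(2)}f_*\,(z_1-z_*)+\tfrac{1}{2}\nabla^{(3)}f_*\,(z_1-z_*)^{\otimes 2}+\mathcal{O}(\|z_1-z_*\|^3).
\end{align*}
Taking expectations and using Lemma~\ref{lem: finite-moment} with Cauchy--Schwarz ($\mE[\|z_1-z_*\|^3]\leq\sqrt{\mE[\|z_1-z_*\|^2]\mE[\|z_1-z_*\|^4]}=\mathcal{O}((1-\theta)^{3/2})$) gives
\begin{align*}
\nabla^{(2)}f_*\,\mE[z_1-z_*]+\tfrac{1}{2}\nabla^{(3)}f_*\,\mE[(z_1-z_*)^{\otimes 2}]=\mathcal{O}((1-\theta)^{3/2}).
\end{align*}
Inverting $\nabla^{(2)}f_*$, which is nonsingular by the SCSC property of $f$, yields the claim.

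The main technical obstacle is step two: handling the coupling $y_{1}+\sigma\tilde q_1$ in the $x$-stationarity condition without losing the $\mathcal{O}((1-\theta)^{3/2})$ order. The delicate part is recognizing that the correction term is effectively $\sigma\cdot\nabla_{xy}^2 f\cdot\nabla_y f(z_1)$ in expectation (the noise drops out) and that $\mE[\|\nabla_y f(z_1)\|]$ is small by stationarity and smoothness; otherwise the linear-in-$\sigma$ term would dominate and spoil the rate. Once this is handled, the remaining Taylor/moment bookkeeping is routine given Lemma~\ref{lem: finite-moment} and Assumption~\ref{assump: obj-fun-prop}.
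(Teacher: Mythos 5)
Your proposal is correct and follows essentially the same route as the paper: stationarity of the mean under $\pi_*^{(\theta)}$ yields $\mE[\nabla_y f(z_1)]=0$ and $\mE[\nabla_x f(x_1,y_1+\sigma\tilde q_1)]=0$, the coupling in the second equation is absorbed into an $\cO\big((1-\theta)^{3/2}\big)$ error using $\sigma=\cO(1-\theta)$ together with the moment bounds of Lemma~\ref{lem: finite-moment}, and a second-order Taylor expansion of $\nabla f$ at $z_*$ with a cubic remainder finishes the argument (the paper organizes the coupling error as a second-moment difference matrix $S$ between $(x_1,\tilde y_1)$ and $z_1$ rather than as a first-order expansion in the $y$-argument, but the quantities being bounded are the same). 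One small slip: $\mE[\tilde q_1\mid z_1,z_0]=(1+\theta)\nabla_y f(z_1)-\theta\nabla_y f(z_0)$, not $\nabla_y f(z_1)$, but since this is still $\cO\left(\max_{i\in\{0,1\}}\Vert z_i-z_*\Vert\right)$ in norm, your $\cO\big((1-\theta)^{3/2}\big)$ bound is unaffected.
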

\begin{proof}
Let $\nabla f(z)=\nabla f(x,y)$ for simplicity. \sa{Recall that $\grad f(z_*)=\mathbf{0}$ and} consider the 
\sa{second-}order Taylor expansion of $\nabla f$ with integral remainder of $\nabla f$ around $z_*$,
\begin{equation}\label{eq: Taylor-exp-grad}
\nabla f(z)= \sa{\grad f(z_*)+}\nabla^{(2)}f_* (z-z_*)+\frac{1}{2}\nabla^{(3)}f_*(z-z_*)^{\otimes2}+ \bcred{r_1}(z).
\end{equation}
\sa{Since $f$ has uniformly bounded $4$-th order derivative,} $\bcred{r_1}$ satisfies the condition 
\begin{equation}\label{ineq: R1}
\sup_{z\in\mRd}\left\{\frac{\Vert \bcred{r_1}(z)\Vert}{\Vert z-z_* \Vert^3}\right\}<\infty.
\end{equation}
Therefore, we can write the following approximations for each $x_k,y_k$ \sa{and $\tilde{y}_k\triangleq y_{k+1}$} defined \sa{in} \eqref{alg: sapd}:
\begin{subequations}\label{eq: grad-exp-sys}
\begin{align}
\nabla f(x_k,\tilde{y}_k)&=\nabla^{(2)}f_* \begin{bmatrix}x_k-x_*\\\tilde{y}_k-y_*\end{bmatrix}+\frac{1}{2}\nabla^{(3)}f_*\begin{bmatrix}x_k-x_*\\\tilde{y}_k-y_*\end{bmatrix}^{\otimes 2} +\bcred{r_1}(x_k,\tilde{y}_k),\\
\nabla f(z_k)&=\nabla^{(2)}f_*(z_k-z_*)+\frac{1}{2} \nabla^{(3)}f_*(z_k-z_*)^{\otimes 2}+ \bcred{r_1}(z_k),\\
\nabla f(z_{k-1})&=\nabla^{(2)}f_* (z_{k-1}-z_*)+\frac{1}{2}\nabla^{(3)}f_* (z_{k-1}-z_*)^{\otimes 2}+\bcred{\bcred{r_1}}(z_{k-1}).
\end{align}
\end{subequations}
\sa{Taking expectation of the both sides of the equation \eqref{alg: sapd-dyn-sys} yields $\mE[\xi_2]=M\mE[\xi_1]+N\mE[\tPhi_1]$, which implies that
\begin{align}
\label{eq:simple-system}
\mE[z_2]=\mE[z_1]+
\begin{bmatrix}
-\tau\tilde\grad_x f(x_1,\tilde y_1)\\ 
\sigma(1+\theta)\tilde\grad_y f(z_1)-\sigma\theta\tilde\grad_y f(z_0)
\end{bmatrix}.
\end{align}
Since the measure \sa{$\pi_*^{(\theta)}$} is invariant for $\mathcal{R}$, clearly we have $\mE[z_2]=\mE[z_1]$; thus, \eqref{eq:simple-system} yields the following system of equations:}
\begin{subequations}\label{eq: sys-of-eq-at-inv-meas}
\begin{align}
    \mE[z_2-z_*]&=\mE[z_1-z_*], \label{eq: sys-of-eq-at-inv-meas-a}\\
    \mE[\tn_x f(x_1,\tilde{y}_1)]&=0,\\
    (1+\theta)\mE[\tn_yf(z_1)]&=\theta \mE[\tn_yf(z_{0})] \label{eq: sys-of-eq-at-inv-meas-c}.
\end{align}
\end{subequations}
Define the variables, 
\begin{align}\label{def: grad-noises}
w_1^0\sa{\triangleq}\tn_x f(x_1,\tilde{y}_1)-\nabla_x f(x_1,\tilde{y}_1),\;
w_1^1\sa{\triangleq}\tn_yf(z_1)-\nabla_y f(z_1),\;
w_1^2\sa{\triangleq}\tn_yf(z_0)-\nabla_y f(z_0)
\end{align}
and \sa{introduce new} notations, 
$$
H_{1}\bc{\triangleq}\nabla^{(2)}_{xx}\bc{f(z_*)},\; H_2 \bc{\triangleq}\nabla^{(2)}_{xy}\bc{f(z_*)},\; H_3\bc{\triangleq}\nabla^{(2)}_{yy}\bc{f(z_*)}.
$$
Recall that $\tilde{y}_1=y_1+\sigma \tilde{q}_1$, \bc{and $\tilde{q}_1=(1
+\theta)\tn_y f(z_1)-\theta \tn_y f(z_0)$; therefore, \eqref{eq: sys-of-eq-at-inv-meas-c} yields, 
\begin{equation}
\label{eq:y1ty1}
\mE[\tilde{y}_1]=\mE[y_1+\sigma \tilde{q}_1]=\mE[y_1]+\sigma \mE[(1+\theta)\tn_yf(z_1)]-\sigma \theta \mE[\tn_y f(z_0)]=\mE[y_1].
\end{equation}
}
If we set $\bcred{S}\bc{\triangleq}\mE\left[\begin{bmatrix}x_1-x_*\\ \tilde{y}_1-y_*\end{bmatrix}^{\otimes 2}\right]-\mE[(z_1-z_*)^{\otimes 2}]$, \sa{then using
the Taylor expansions in \eqref{eq: grad-exp-sys} and \eqref{eq:y1ty1}, the expectation of \mg{the elements of } $\tPhi_1\bc{=[\tn_x f(x_1, \tilde{y}_1)^\top, \tn_y f(z_1)^\top, \tn_y f(z_0)^{\top} ]^\top}$ can be written as follows:}
\begin{subequations}\label{eq: taylor-exp-tphi}
\begin{align}
\mE[\tn_xf(x_1,\tilde{y}_1)]&=[H_1, H_2]\mE[z_1-z_*]+\frac{1}{2}\Pi_1 \nabla^{(3)}f_*\mE[(z_1-z_*)^{\otimes2}]+ \frac{1}{2}\Pi_1 \nabla^{(3)}f_* \bcred{S} + \Pi_1\mE[\bcred{r_1}(x_1,\tilde{y}_1)]\\
\mE[\tn_y f(z_1)]&=[H_2^\top, H_3]\mE[z_1-z_*]+\frac{1}{2}\Pi_2 \nabla^{(3)}f_* \mE[(z_1-z_*)^{\otimes 2}]+\Pi_2\mE[\bcred{r_1}(z_1)]\\
\mE[\tn_y f(z_0)]&=[H_2^\top, H_3]\mE[z_0-z_*]+\frac{1}{2}\Pi_2 \nabla^{(3)}f_* \mE[(z_0-z_*)^{\otimes 2}]+\Pi_2\mE[\bcred{r_1}(z_0)],
\end{align}
\end{subequations}
where $\Pi_1$ and $\Pi_2$ are projection matrices defined as \bc{$\Pi_1z_k=x_k$ and $\Pi_2z_k=y_{k}$ for any $z_k =\begin{bmatrix}x_k\\y_{k} \end{bmatrix} \in \mathbb{R}^{2d}$} and we used the fact that \sa{$\mE[w_1^0]=\mE[w_1^1]=\mE[w_1^2]=0$.} 

\bc{
Since $\pi_*^{(\theta)}$ is invariant measure, $\mE[(\xi_2-\xi_*)^{\otimes 2}]=\mE[(\xi_1-\xi_*)^{\otimes 2}]$ which also implies  
$\mE[(z_1-z_*)^{\otimes 2}]=\mE[(z_0-z_*)^{\otimes 2}]$, where the expectations are taken with respect to $\pi_*^{(\theta)}$.} \mg{We also recall that $ \mE[z_1 - z_*] = \mE [z_0 - z_*]$. Therefore, the Taylor expansion of the partial derivatives \eqref{eq: taylor-exp-tphi} together with the conditions in~\eqref{eq: sys-of-eq-at-inv-meas} gives the following equalities:}
\begin{align*} 
0&= [H_1, H_2] \mE[z_1-z_*] + \frac{1}{2}\Pi_1 \nabla^{(3)}f_*\mE[(z_1-z_*)^{\otimes 2}]+\frac{1}{2}\Pi_1 \nabla^{(3)}f_*\bcred{S}+ \Pi_1\mE[\bcred{r_1}(x_1,\tilde{y}_1)],\\
0&=(1+\theta)\mE[\tn_yf(z_1)]-\theta\mE[\tn_yf(z_0)]\\
&=[H_2^\top,H_3]\mE[z_1-z_*]+\frac{1}{2}\Pi_2\nabla^{(3)}f_*\mE[(z_1-z_*)^{\otimes 2}]+\Pi_2\left( (1+\theta)\mE[\bcred{r_1}(z_1)]-\theta\mE[\bcred{r_1}(z_0)] \right).
\end{align*} 
Introducing \bc{\begin{equation}\bcred{e_1}\triangleq[\bcred{r_1}(x_1,\tilde{y}_1)^{\top}\Pi_1^{\top}+\frac{1}{2}(\Pi_1\nabla^{(3)}f_*\bcred{S})^{\top},\left( (1+\theta)\bcred{r_1}(z_1)-\theta \bcred{r_1}(z_0) \right)^\top\Pi_2^\top]^\top\sa{\in\reals^d},\label{def-e1}
\end{equation}} these inequalities yield 
\begin{equation}
0=\nabla^{(2)}f_* \mE[z_1-z_*]+\frac{1}{2}\nabla^{(3)}f_*\mE[(z_1-z_*)^{\otimes 2}]+ \mE[\bcred{e_1}].
\label{ineq-intermediate}
\end{equation}
This gives the characterization of the gap $\mE[z_1]-z_*$ with respect to variance $\mE[(z_1-z_*)^{\otimes 2}]$. \mg{For completing the proof, \eqref{ineq-intermediate} shows that it suffices to prove that the error term $\mE[\bcred{e_1}] = \sa{\cO\left((1-\theta)^{3/2}\right)}$. In the rest of the proof, we will study the components of the vector $\mE[\bcred{e_1}]$ to show this.} 
\sa{First, note that H\"older's inequality and \eqref{eq:moment4} imply that 
\begin{align}
    \label{eq:moment3}
    \mE[\Vert z_1-z_*\Vert^3]=\cO\left((1-\theta)^{3/2}\right).
\end{align}}%
The Jensen's inequality, \eqref{ineq: R1} \sa{and \eqref{eq:moment3}} imply \begin{equation}\Vert\mE[\bcred{r_1}(z_i)]\Vert \leq \mE[\Vert \bcred{r_1}(z_i)\Vert]\sa{=\mathcal{O}(\mE[\Vert z_1-z_*\Vert^3])=\cO\left((1-\theta)^{3/2}\right)} \quad \mbox{for each}\quad i\in \{0,1\},
\label{def-r1-zi}
\end{equation} 
Note that the remainder term $\Vert \mE[\bcred{r_1}(x_1,\tilde{y}_1)]\Vert$ can be bounded as 
\begin{align}
    \label{eq:r1-xty}
    \Vert \mE[\bcred{r_1}(x_1,\tilde{y}_1)]\Vert &\leq \mE[\Vert \bcred{r_1}(x_1,\tilde{y}_1)\Vert] \nonumber
    = \mathcal{O}(\mg{\mE[\Vert (x_1,\tilde{y}_1)\Vert^3]})
    \\
    &=\mathcal{O}(\mE[(\Vert x_1-x_*\Vert^2 + \Vert y_1-y_*+\sigma \tilde{q}_1 \Vert^{2})^{3/2}])=\mathcal{O}(\mE[(\Vert z_1-z_*\Vert^2 + \sigma^2 \Vert\tilde{q}_1 \Vert^{2})^{3/2}]),
\end{align} 
\mg{where we used \eqref{ineq: R1}}. In order to bound $\Vert \mE[\bcred{r_1}(x_1,\tilde{y}_1)]\Vert$, we now consider $\Vert\tilde{q}_1 \Vert^{2}$.
{\footnotesize
\begin{align}
\Vert\tilde{q}_1\Vert^{2}=&\Vert(1+\theta)(\tn_y f(z_1)-\nabla_y f(z_1))+(1+\theta)(\nabla_y f(z_1)-\nabla_y f(z_*))-\theta (\tn_y f(z_0)-\nabla_y f(z_0))-\theta(\nabla_y f(z_0)-\nabla f(z_*))\Vert^2\nonumber\\
\leq &4(1+\theta)^2\Vert \tn_y f(z_1)-\nabla_y f(\bc{z_1})\Vert^2 +4 \theta^2 \Vert \tn_y f(z_0)-\nabla_y f(\bc{z_0})\Vert^2\nonumber\\ &+\sa{4(1+\theta)^2} \Vert \nabla_y f(z_1)-\nabla_y f(z_*)\Vert^2 + 4\theta^2 \Vert \nabla_y f(z_0)-\nabla_y f(z_*)\Vert^2.\label{eq:tqk-bound}
\end{align}}%
Recall that the function $t^{3/2}$ is convex and monotonically increasing for $t>0$; hence, it satisfies the property $(\frac{a+b+c+d}{4})^{3/2}\leq \frac{1}{4}(a^{3/2}+b^{3/2}+c^{3/2}+d^{3/2})$ for any $a,b,c,d >0$. \sa{Therefore, if both sides of \eqref{eq:tqk-bound} are raised to the power of $3/2$,} then we obtain
\begin{align}
\label{eq:q3}
\mE[\Vert \tilde{q}_1 \Vert^3] = \bc{(1+\theta)^3\mathcal{O}\left( \delta^{3}_{(3)} +\mE[\Vert z_1-z_*\Vert^3]\right)},
\end{align}
\bc{which} follows from \sa{Assumption \ref{assump: obj-fun-prop} and Assumption \ref{assump: noise-properties} -- note} Assumption~\ref{assump: obj-fun-prop}
implies $\Vert \nabla_y f(z_i)-\nabla_y f(z_*)\Vert^2 \leq 2\max\{L_{yx}^2, L_{yy}^2\}\Vert z_i-z_*\Vert^2$ for each $i\in\{0, 1\}$. \sa{Thus, \eqref{eq:r1-xty} and \eqref{eq:q3} together imply that}
$$
\Vert \mE[\bcred{r_1}(x_1,\tilde{y}_1)]\Vert\bcred{=} 
\mathcal{O}\left(\mE[\Vert z_1-z_*\Vert^3]\bc{+ \sigma^{\bcred{3}}(1+\theta)^{3}\delta_{(3)}^{3}}\right).
$$
Recall that $\sigma=\frac{1-\theta}{\mu_y \theta}$; \sa{thus, using \eqref{eq:moment3},} we get 
\begin{equation}
\Vert \mE[\bcred{r_1}(x_1,\tilde{y}_1)]\Vert=\sa{\mathcal{O}\left( (1-\theta)^{3/2}+(1-\theta)^3\delta_{(3)}^{3}\right)}=\mathcal{O}\left( (1-\theta)^{3/2}\right).
\label{eq-order-r-x1-ty1}
\end{equation}

Lastly, let us consider $\Vert \mE[\Pi_1 \nabla^{(3)}f_* \bcred{S}]\Vert$. Recall that $\Pi_1$ is the projection of the vector $\nabla^{(3)}f_* \bcred{S}$ onto its first $d_x$-coordinates; hence we can write $\Vert \Pi_1 \nabla^{(3)}f_* \bcred{S} \Vert^2 = \sum_{l=1}^{\sa{d_x}} (\sa{\sum_{i=1}^d\sum_{j=1}^d}\frac{\partial^3 f(z_*)}{\partial z_i \partial z_j \partial z_l} [\bcred{S}]_{ij})^2 =\mathcal{O}(\Vert \bcred{S} \Vert_F^2)$ where $\Vert .\Vert_F$ is the Frobenius norm and $[\bcred{S}]_{ij}$ is the $(i,j)$-th element of the matrix $\bcred{S}$. The property $\Vert \bcred{S} \Vert_F^2 \leq \Rank(\bcred{S}) \Vert\bcred{S} \Vert^2_2 $ 
\sa{implies that} $\Vert \mE[\Pi_1 \nabla^{(3)}f_* \bcred{S}] \Vert = \mathcal{O}(\Vert \bcred{S} \Vert_2)$. Therefore the eigenvalues of $\bcred{S}$ are relevant for our analysis. 
\sa{Writing} $\bcred{S}$ explicitly as follows,
\begin{align}
\bcred{S} &= \begin{bmatrix} 0_{d_x} & \mE[(x_1-x_*)(\tilde{y}_1-y_*)^{\top}-(x_1-x_*)(y_1-y_*)^\top]\\
\mE[(\tilde{y}_1-y_*)(x_1-x_*)^{\top}-(y_1-\sa{y_*})(x_1-\sa{x_*})^\top] & 
\mE[(\tilde{y}_1-y_*)(\tilde{y}_1-y_*)^\top-(y_1-\sa{y_*})(y_1-\sa{y_*})^{\top}]
\end{bmatrix}\nonumber\\
&=\sigma\begin{bmatrix}
0_{d_x} & \mE[(x_1-x_*)\tilde{q}_1^{\top}]\\
\mE[\tilde{q}_1(x_1-x_*)^{\top}] & \mE[ \tilde{q}_1(y_1-y_*)^{\top}+(y_1-y_*)\tilde{q}_1^\top]
\end{bmatrix}
+\sigma^2 \begin{bmatrix} 0 & 0 \\ 0 & \mE[\tilde{q}_{1}\tilde{q}_1^\top] \end{bmatrix},\label{eq:S-matrix}
\end{align}
\sa{and using Jensen's inequality,} we note $\Vert\mE[(x_1-x_*)\tilde{q}_1^\top]\Vert_2\leq \mE[\Vert (x_1-x_*)\tilde{q}_1^\top\Vert_2]\sa{\leq \mE[\Vert x_1-x_*\Vert\Vert\tilde{q}_1\Vert]}$ \sa{since the spectral norm $\norm{\cdot}_2$ is sub-multiplicative. Next, we compute this expectation using conditioning on $z_1$ and \eqref{eq:tqk-bound}, i.e.,
{\small
\begin{align}
    \mE\Big[\Vert x_1-x_*\Vert\Vert\tilde{q}_1\Vert~\Big|~ z_1\Big]\leq \Vert x_1-x_*\Vert  \left(\mE\Big[\Vert\tilde{q}_1\Vert^2~\Big|~ z_1\Big]\right)^{\tfrac{1}{2}}=\cO\left( \norm{z_1-z_*}(1+\theta)\left(\norm{z_1-z^*}+\norm{z_0-z^*}+\delta_{(2)}\right)\right).
\end{align}}%
Since $z_0\sim\pi_*^{(\theta)}$ and $\pi_*^{(\theta)}$ is the stationary distribution; thus, we also have $z_1\sim\pi_*^{(\theta)}$, which implies $\mE[\norm{z_0-z_*}^2]=\mE[\norm{\bcred{z_1}-z_*}^2]=\cO(1-\theta)$ based on \eqref{eq:moment2-bound}. Therefore, using H\"older's inequality, we get
\begin{equation}
\label{eq:z0z1}
\mE\Big[\norm{z_1-z_*}\norm{z_0-z_*}\Big]\leq\left(\mE[\norm{z_1-z_*}^2]\right)^{\tfrac{1}{2}}~\left(\mE[\norm{z_0-z_*}^2]\right)^{\tfrac{1}{2}}=\cO(1-\theta).
\end{equation}
Finally, using H\"older's inequality one more time, we obtain  $\mE[\norm{z_1-z_*}]=\cO\big(\sqrt{1-\theta}\big)$. This bound together with \eqref{eq:z0z1} implies that $\mE\Big[\norm{x_1-x_*}~\norm{\tilde q_1}\Big]=\cO(\sqrt{1-\theta})$. \mg{Consequently, we have also}
\begin{equation}
    \label{eq:norm-Exq}
    \left\|\mE\left[(x_1-x_*)\tilde{q}_1^\top\right]\right\|_2=\cO(\sqrt{1-\theta}).
\end{equation}}%
\sa{Let $v_1,v_2\in\mRd$ be some arbitrary vectors such that $\norm{v_1}=\norm{v_2}=1$,}
then \sa{using \eqref{eq:S-matrix},} the quadratic form $v^\top \bcred{S} v$ of the vector $v=[v_1^\top,v_2^\top]^\top$ 
\sa{can be written as follows:}
\bc{
\begin{align}\label{ineq: sing-value-ineq}
    |v^\top \bcred{S} v|=&|2\sigma v_1^\top \mE[(x_1-x_*)\tilde{q}_1^\top]v_2+\sigma v_2^\top \mE[ \tilde{q}_1(y_1-y_*)^{\top}+(y_1-y_*)\tilde{q}_1^\top] v_2 + \sigma^2 v_2^\top \mE[\tilde{q}_1\tilde{q}_1^\top]v_2| \nonumber\\
    \bc{\leq}&2\sigma \sa{\norm{\mE[(x_1-x_*)\tilde{q}_1^\top]}_2} 
    + 2\sigma \sa{\norm{\mE\big[ (y_1-y_*)\tilde{q}_1^{\top}\big]}_2} 
    + \sigma^2 \mE[\Vert\tilde{q}_1\Vert^2],
\end{align}}%
\sa{where we used \emph{(i)} $v_1^\top \mE[(x_1-x_*)\tilde{q}_1^\top]v_2\leq \norm{\mE[(x_1-x_*)\tilde{q}_1^\top]}_2$, which follows from the variational definition of the spectral norm, \emph{(ii)} the definition of the spectral norm, i.e.,
\begin{align*}
    v_2^\top \mE\Big[ \tilde{q}_1(y_1-y_*)^{\top}+(y_1-y_*)\tilde{q}_1^\top\Big] v_2
    &\leq \left\|\mE\Big[ \tilde{q}_1(y_1-y_*)^{\top}+(y_1-y_*)\tilde{q}_1^\top\Big]\right\|_2
    \leq 2\left\|\mE\Big[ (y_1-y_*)\tilde{q}_1^{\top}\Big]\right\|_2,
\end{align*}
\emph{(iii)} similarly, $v_2^\top \mE\Big[\tilde{q}_1\tilde{q}_1^\top\Big]v_2\leq \left\|\mE\Big[\tilde{q}_1\tilde{q}_1^\top\Big]\right\|_2\leq \mE\Big[\norm{\tilde{q}_1\tilde{q}_1^\top}_2\Big]\leq \mE\Big[\norm{\tilde{q}_1}_2^2\Big]$. Using the identical arguments for deriving \eqref{eq:norm-Exq}, we also get $\norm{\mE\big[ (y_1-y_*)\tilde{q}_1^{\top}\big]}_2=\cO(\sqrt{1-\theta})$. Finally, from \eqref{eq:tqk-bound}, we get $\mE[\Vert \tilde{q}_1 \Vert^2] = (1+\theta)^2\mathcal{O}\left( \delta^{2}_{(2)} +\mE[\Vert z_1-z_*\Vert^2]\right)$. Therefore, since $\sigma=\cO(1-\theta)$, all these bounds can be combined to result in
$\Vert \bcred{S} \Vert_2=\cO\Big((1-\theta)^{\tfrac{3}{2}}\Big)$.}
\mg{Combining this with \eqref{eq-order-r-x1-ty1} and \eqref{def-r1-zi}, we conclude from the definition \eqref{def-e1} of $e_1$ that} 
$$
\Vert \mE[\bcred{e_1}] \Vert \leq \mE[\Vert \bcred{e_1}\Vert] =\mathcal{O}\left(
\sa{(1-\theta)^{3/2}} \right).
$$
\mg{Plugging this into \eqref{ineq-intermediate}, we conclude}.
\end{proof}
\mg{Equipped with Lemma \ref{lem: char-mean-z}, now we are ready to complete the proof of Theorem \ref{thm: expansion-at-inv-meas-mean}}. We recall that without loss of generality, we assume $\xi_1$ is distributed according to the invariant measure. In this case, the iterates $\xi_k=[z_k^\top, z_{k-1}^{\top}]^\top$ are also distributed according to the invariant measure and we have $\mE[\xi_k-\xi_*]=\mE[\xi_1-\xi_*]$ for every $k$. From the equations  \eqref{eq: sys-of-eq-at-inv-meas}, we conclude that 
$\mE[z_k-z_*]=\mE[z_{k-1}-z_*]$ for each $k\in\mathbb{N}$. 
\mg{Consequently, we have}
\bc{
\begin{equation}
\mE[\xi_1-\xi_*]= \begin{bmatrix} 
 \mE[z_1-z_*]\\
 \mE[z_0-z_*]
\end{bmatrix} =\begin{bmatrix} 
 \mE[z_1-z_*]\\
 \mE[z_1-z_*]
\end{bmatrix}. 
\label{eq-xi-vs-z}
\end{equation}
}
%
%
\mg{Therefore it suffices to characterize  $\mE[(z_1-z_*)$ which itself depends on $\mE[(z_1-z_*)^{\otimes 2}]$ according to Lemma \ref{lem: char-mean-z}.
For this purpose,} we will next study $\mE[(z_1-z_*)^{\otimes 2}]$. 
We first write the second-order Taylor expansion of $\nabla f$ around $z_*$ with a remainder term,
\begin{equation}\label{eq: taylor-exp-grad}
    \nabla f(x,y)=\sa{\nabla f_*+}\nabla^{(2)}f_* \begin{bmatrix} 
    x-x_*\\
    y-y_*
    \end{bmatrix}+\bcred{\bcred{r_2}}(x,y),\quad\sa{\forall~z=(x,y)\in\reals^d.}
\end{equation}
\sa{Since $f$ has uniformly bounded $3$-rd order partial derivatives,} ${r_2}$ satisfies the condition $\sup_{z\in\mathbb{R}^{d}}\{ \frac{\Vert \bcred{\bcred{r_2}}(z)\Vert}{\Vert z-z_*\Vert^2}\}< \infty$. 
\sa{Since $\grad f_*=\mathbf{0}$, for $x_k, y_k$, and $\tilde{y}_k\triangleq y_{k+1}$ defined by SAPD, we have}
\begin{align*}
    \nabla f(x_k, \tilde{y}_k)&= \nabla^{(2)}f_* \begin{bmatrix} 
    x_k-x_*\\
    \tilde{y}_k-y_*
    \end{bmatrix} + \bcred{\bcred{r_2}}(x_k,\tilde{y}_k)\\ 
    \nabla f(x_k,y_k)&=\nabla^{(2)}f_* (z_k-z_*)+ \bcred{\bcred{r_2}}(z_k)\\
    \nabla f(x_{k-1},y_{k-1})&= \nabla^{(2)}f_* (z_{k-1}-z_*)+\bcred{\bcred{r_2}}(z_{k-1}).
\end{align*}
\sa{Using the fact that $\tilde{y}_k-y_*=y_k-y_*+\sigma \tilde{q}_k$ for any $k\geq 0$, we get}
\begin{align}
    \tn_x f(x_1, \tilde{y}_1)&= [H_1, H_2] (z_1-z_*)+ \sigma H_2 \tilde{q}_1+ \Pi_1 \bcred{\bcred{r_2}}(x_1,\tilde{y}_1)+ w_1^0 \\
    \tn_y f(x_1,y_1)&=[H_2^\top , H_3] (z_1-z_*) + \Pi_2 \bcred{\bcred{r_2}}(z_1) + w_1^1 \\ 
    \tn_y f(x_0,y_0)&= [H_2^{\top}, H_3] (z_0-z_*) + \Pi_2 \bcred{\bcred{r_2}}(z_0)+ w_1^2, 
\end{align}
where $H_1=\nabla_{xx}^{(2)}f_*$, $H_2= \nabla_{xy}^{(2)}f_*$, $H_3= \nabla_{yy}^{(2)} f_*$, the projection matrices $\Pi_i$'s are as defined in the proof of Lemma \ref{lem: char-mean-z}, and the variables $w_1^0, w_1^1$ and $w_1^2$ are as given in \eqref{def: grad-noises}. Let 
\begin{align}\label{def: M0-e2-w1}
M_0\triangleq\begin{bmatrix} 
\nabla^{(2)}f_* & 0_{d\times d_x} & 0_{d \times d_y}\\
0_{d_y\times d} & H_2^\top & H_3
\end{bmatrix},\;
\bcred{e_2}\triangleq\begin{bmatrix}
\sigma H_2\tilde{q}_1+\Pi_1 \bcred{\bcred{r_2}}(x_1,\tilde{y}_1)\\
\Pi_2 \bcred{\bcred{r_2}}(z_1)\\
\Pi_2 \bcred{\bcred{r_2}}(z_0)
\end{bmatrix},\; \sa{w_1}\triangleq \begin{bmatrix} 
\bc{w_1^0}\\ 
w_1^1 \\
w_1^2
\end{bmatrix},\; \xi_*\triangleq \begin{bmatrix} x_*\\
y_*\\
x_*\\
y_*
\end{bmatrix},
\end{align}
then using the dynamical system \sa{notation in~\eqref{alg: sapd-dyn-sys}, we get} 
\begin{equation*}
    \xi_2-\xi_* = \underbrace{(M + NM_0)}_{L} (\xi_1-\xi_*)+ N\bcred{e_2} + \bc{N}\sa{w_1}.\label{eq-L-def}
\end{equation*}
where 
\begin{align}
M=\begin{bmatrix} 
I_d & 0_d \\
I_d & 0_d
\end{bmatrix},\quad  
N= \begin{bmatrix}
-\tau I_{d_x} & 0_{d_x\times d_y} & 0_{d_x\times d_y}\\
0_{d_y\times d_x} &  \sigma(1+\theta)I_{d_y} & -\theta \sigma I_{d_y}\\ 
0_{d_x\times d_x} &  0_{d_x\times d_y} & 0_{d_x\times d_y}\\
0_{d_y} & 0_{d_y} & 0_{d_y} 
\end{bmatrix}.
\label{eq-M-and-N}
\end{align}
and 
\begin{align*}
    L\sa{\triangleq} M + NM_0 = \begin{bmatrix} 
    I_d+ A \nabla^{(2)}f_* & B\nabla^{(2)}f_* \\
    I_d  & 0_d 
    \end{bmatrix},
\end{align*}
with the matrices 
\begin{align*}
    A\sa{\triangleq} \begin{bmatrix} 
    -\tau I_{d_x} & 0_{d_{x}\times d_y}\\
    0_{d_y \times d_x} & \sigma(1+\theta) I_{d_y}
    \end{bmatrix}, \; 
    B\sa{\triangleq} \begin{bmatrix} 
    0_{d_x} & 0_{d_x\times d_y}\\ 
    0_{d_y\times d_x} & -\theta \sigma I_{d_y}
    \end{bmatrix}.
\end{align*}
\mg{Using the fact that the noise $w_1$ satisfies $\mE[w_1~|~\xi_1,\tilde{y}_1]=\mathbf{0}$}, we obtain from \eqref{eq-L-def}.
\begin{align}\label{eq: var-xi-eq}
    \mE[(\xi_2-\xi_*)^{\otimes 2}]&= L\mE[(\xi_1-\xi_*)^{\otimes 2}]L^\top + N\mE[\sa{w_1}^{\otimes 2}]N^\top + \Delta, 
\end{align}
where $\Delta$ has the following form: 
\begin{equation}\label{def: matrix-Delta}
    \Delta\bc{\triangleq}\begin{bmatrix} \Delta_1 & \Delta_2 \\
    \Delta_2^\top & \sa{\Delta}_\bcred{3}
    \end{bmatrix}= L\mE[(\xi_1-\xi_*)\bcred{e_2}^\top]N^\top + N \mE[\bcred{e_2} (\xi_1-\xi_*)^\top]L^\top + N \mE[\bcred{e_2} \bcred{e_2}^{\top}]N^\top,
\end{equation}
\sa{for} $\Delta_i \in \mathbb{R}^{d\times d}$ for each $i\in \{1,2,\bcred{3}\}$. 
\sa{Define $\Sigma\triangleq \mE[(\xi_1-\xi_*)^{\otimes 2}]$ and $W\triangleq\mE[\sa{w_1}^{\otimes 2}]$; furthermore, let}
$W_{ij}\triangleq \mE[(w_1^i)(w_1^j)^\top]$ \sa{for $i,j\in\{0,1,2\}$,}
$\Sigma_1\triangleq\mE[(z_1-z_*)^{\otimes 2}]$, $\Sigma_2\triangleq\mE[(z_1-z_*)(z_0-z_*)^\top]$, and $ \;\Sigma_3\triangleq\mE[(z_0-z_*)^{\otimes 2}]=\Sigma_1$.
Firstly notice that 
\begin{align*}
    NWN^\top = \begin{bmatrix}
    \bar{W} & 0_{d}\\
    0_d & 0_d\end{bmatrix} 
\end{align*}
\sa{for $\bar{W}$ such that}
\begin{equation}\label{def: barW}
\bar{W}= T W T^\top, \quad T\triangleq \begin{bmatrix}\sa{-}\tau I_{d_x} & 0_{d_x\times d_y} & 0_{d_x\times d_y}\\
0_{d_y\times d_x} & \sigma (1+\theta)I_{d_y} & -\theta \sigma I_{d_y}
\end{bmatrix}.  
\end{equation}
Therefore, the equation \eqref{eq: var-xi-eq} yields the following system of equations, 
\begin{subequations}\label{eq: sys-cov-matrix}
\begin{align}
    \Sigma_1 & = (I+A\hes)\Sigma_1 \sa{(I+A \hes)^{\top}}+ (I+A\hes)\Sigma_2 \hes B^\top + B \sa{\hes} \Sigma_2^\top (I+A \hes)^{\top}, \nonumber\\
    &\quad\quad\quad\quad\quad + B \hes \Sigma_1 \hes B^\top +\Delta_1 + \bar{W}, \label{eq: sys-cov-matrix-a} \\
    \Sigma_2^{\top}&= \Sigma_1(I+A\hes)^\top + \Sigma_2 \hes B^\top + \Delta_2^\top, \label{eq: sys-cov-matrix-b}\\
    \Sigma_2&= (I+A\hes)\Sigma_1 + B\hes \Sigma_2^\top + \Delta_2,\label{eq: sys-cov-matrix-c}\\
    \sa{\Sigma_1} &= \sa{\Sigma_1+ 0_d+ \Delta_\bcred{3};} \label{eq: sys-cov-matrix-d}
\end{align}
\end{subequations}
\sa{thus, $\Delta_\bcred{3}=0_d$}. If we introduce the matrix $C\sa{\triangleq}A+B = \begin{bmatrix}
-\tau I_{d_x} & 0_{d_x \times d_y}\\
0_{d_y \times d_x} & \sigma I_{d_y}
\end{bmatrix}$, and the notation $H\triangleq\hes $ for simplicity of the presentation, \sa{then using \eqref{eq: sys-cov-matrix-c} within \eqref{eq: sys-cov-matrix-a} we get the first equation below and substituting \eqref{eq: sys-cov-matrix-b} into \eqref{eq: sys-cov-matrix-c} we get the second equation below:}
\begin{subequations}\label{eq: sys-cov-matrix-2}
\begin{align}
    \Sigma_1- BH \Sigma_1 H B^\top &= \Sigma_2 - B H \Sigma_2 H B^\top +\Sigma_2H C^\top + CH \Sigma_2 HB^\top + \Delta_1- \Delta_2(I+ AH)^\top+\bar{W}\\ 
    \Sigma_2 - B H\Sigma_2 H B^\top &= \Sigma_1 - B H \Sigma_1 HB^\top  + CH \Sigma_1 + BH \Sigma_1 HC^\top + BH \Delta_2^\top + \Delta_2.
\end{align}
\end{subequations}
\bcred{Next, we \mg{estimate} $\Sigma_1$ by approximately solving the coupled matrix equations \eqref{eq: sys-cov-matrix-2} known \mg{as coupled Lyapunov equations} (see e.g. \cite{borno1995parallel}). For this purpose, first we are going to study the terms $BH\Sigma_iHB^\top$ and $BH \Sigma_i HC^\top$ and show that their spectral norms are $\mathcal{O}((1-\theta)^3)$ for $i=1,2$. 
\sa{From the definitions of $B$ and $C$, the following} equalities hold for any matrix $X\in \mathbb{R}^{d\times d}$:
\begin{equation}\label{eq: operator-prop-1}
    BHXHB^\top = \sa{(\theta \sigma)^2}\begin{bmatrix}
    0_{d_x} & 0_{d_x\times d_y}\\
    0_{d_y\times d_x} & \sa{\tilde{X}_4}
    \end{bmatrix},
    \qquad  
    BHXH C^\top= \theta \sigma \begin{bmatrix} 0_{d_x\times d_y} & 0_{d_x\times d_y}\\ 
\tau \sa{\tilde{X}_3} & -\sigma \sa{\tilde{X}_4}\end{bmatrix},
\end{equation}
where \sa{$\tilde{X}_3 \in \mathbb{R}^{d_y\times d_x}$} and $\sa{\tilde{X}_4} \in \mathbb{R}^{d_y\times d_y}$ \sa{are defined as the blocks of the matrix $HXH$ satisfying
$
\begin{bmatrix} 
\tilde{X}_1 & \tilde{X}_2 \\ 
\tilde{X}_3 & \tilde{X}_4
\end{bmatrix} = HXH$}. 
\sa{Since in Theorem \ref{thm: expansion-at-inv-meas-mean}, we set $\tau=\frac{1-\theta}{\mu_x\theta}$ and $\sigma = \frac{1-\theta }{\mu_y \theta}$, 
\eqref{eq: operator-prop-1} 
implies that} $\Vert BHXHB^\top\Vert_2=\mathcal{O}((1-\theta)^2 \Vert \sa{\tilde{X}_4}\Vert_2)=\mathcal{O}((1-\theta)^2\Vert X\Vert_2)$ and $\Vert BHXHC^\top\Vert_2= \mathcal{O}((1-\theta)^2\max_{i=\{\sa{3,4}\}}\Vert \tilde{X}_i\Vert_2)=\mathcal{O}((1-\theta)^2\Vert X\Vert_2)$ for each $i\in \{1,2\}$, where we bound the spectral norm of a matrix from the spectral norm of a submatrix (see \cite{sing-val-submatrix} for similiar inequalities). Hence, we get $\Vert BH\Sigma_iHB^\top\Vert_2=\mathcal{O}((1-\theta)^2 \Vert\Sigma_i\Vert_2)$ and $\Vert BH\Sigma_iHC^\top\Vert_2=\mathcal{O}((1-\theta)^2 \Vert \Sigma_i\Vert_2)$ for each $i\in\{1,2\}$. 
\sa{Note Jensen's inequality} implies 
\begin{equation}
    \label{eq:Sigma1-bound}
    \Vert\Sigma_1\Vert_2\leq \mE[\Vert (z_1-z_*)(z_1-z_*)^\top\Vert_2]=
\sa{\mE[\Vert z_1-z_*\Vert^2];}
\end{equation}
similarly, we also have 
\begin{equation}
    \label{eq:Sigma2-bound}
    \Vert \Sigma_2\Vert_2 \leq \mE[\Vert (z_1-z_*)(z_0-z_*)^\top\Vert_2] \sa{=\mE[\norm{z_1-z_*}\norm{z_0-z_*}]} \leq \mE[\Vert z_1-z_*\Vert^2]^{1/2}\mE[\Vert z_0-z_*\Vert^2]^{1/2}
\end{equation} 
by H\"older and Cauchy-Schwarz inequalities. Finally, \sa{since $z_1$ and $z_0$ are distributed according to the stationary distribution, Lemma \ref{lem: finite-moment} implies that $\mE[\norm{z_i-z_*}^2]=\cO(1-\theta)$; therefore, \eqref{eq:Sigma1-bound} and \eqref{eq:Sigma2-bound} imply the} desired results for each $i=1,2$,
\begin{align}\label{eq: prod-apprx}
\Vert BH\Sigma_i HB^\top \Vert_2 = \mathcal{O}\left( (1-\theta)^{3}\right),\qquad  \Vert BH\Sigma_iHC^\top\Vert_2 =\mathcal{O}\left( (1-\theta)^3 \right).
\end{align}

Consequently, we can write the system in \eqref{eq: sys-cov-matrix-2} as follows:
\begin{subequations}
\label{eq:S1S2}
\begin{align}
\Sigma_1&= \Sigma_2 + \Sigma_2 HC^\top+ \Delta_1 - \Delta_2(I+AH)^\top + \bar{W} + \mathcal{O}\left((1-\theta)^3\right)\\
\Sigma_2 &= \Sigma_1+ CH\Sigma_1 +BH\Delta_2^\top + \Delta_2+\mathcal{O}\left((1-\theta)^3\right). 
\end{align}
\end{subequations}
\sa{If we eliminate $\Sigma_2$ within the system in~\eqref{eq:S1S2}, we get the following equation in terms of $\Sigma_1$,}}
\bcred{
\begin{equation*}
    \Sigma_1= \Sigma_1 +CH\Sigma_1+ \Sigma_1 HC^\top +CH\Sigma_1HC^\top +BH\Delta_2^\top+ \Delta_2HB^\top +BH\Delta_2HC^\top+ \Delta_1 + \bar{W} + \mathcal{O}\left( 1-\theta)^3\right)
\end{equation*}
\sa{The arguments we used for deriving \eqref{eq: prod-apprx} also imply that $\Vert CH\Sigma_1HC^\top\Vert_2=\cO((1-\theta)^2\norm{\Sigma_1}_2)$; thus, we get $\Vert CH\Sigma_1HC^\top\Vert_2=\mathcal{O}\left((1-\theta)^3\right)$. Furthermore, \eqref{eq: sys-cov-matrix-c} implies that $\norm{\Delta_2}_2=\cO(\norm{\Sigma_1}_2+\norm{\Sigma_2}_2)$; hence, we also get $\Vert BH \Delta_2 HC^\top \Vert_2=\mathcal{O}\left( (1-\theta)^3 \right)$. Therefore,}}%
\begin{align} \label{eq: Lyapunov-eq}
\mg{ CH\Sigma_1 +\Sigma_1 HC^\top} 
&= \bcred{-}\bar{W}\bcred{-} (\underbrace{\Delta_2 HB^\top + BH\Delta_2^\top +\Delta_1)}_{E}+
 \bcred{\mathcal{O}\left( (1-\theta)^3 \right).}
\end{align}
\bcred{The matrix on the left-hand-side of the equation depends on \mg{$\theta$}. Particularly, if we define the matrix 
\begin{equation}\label{def: C0}
C_0\triangleq \begin{bmatrix} -\frac{1}{\mu_x}I_{d_x} & 0_{d_x \times d_y} \\ 
0_{d_y\times d_x} & \frac{1}{\mu_y} I_{d_y}
\end{bmatrix},
\end{equation}
\sa{then we have} $C=\frac{1-\theta}{\theta}C_0$ and \eqref{eq: Lyapunov-eq} \mg{can be written as} 
\mg{
\begin{equation}
\bcred{\big((C_0 H)\otimes I_d + I_d\otimes(C_0 H) \big) }\Sigma_1 =Q,
 \quad
Q  \triangleq -\left(\frac{\theta}{1-\theta} \bar{W}+ \frac{\theta}{1-\theta}E\right)+\mathcal{O}\left( (1-\theta)^2 \right),
 \label{eq-Lyapunov}
\end{equation}}%
\mg{where $\otimes$ denotes the tensor product \sa{for matrices, i.e., for any matrices $M, N\in \mathbb{R}^{d\times d}$ $M\otimes N:\reals^{d\times d}\to\reals^{d\times d}$ such that $M\otimes N: P \rightarrow MPN^\top$}}.
\sa{The equation in \eqref{eq-Lyapunov}} is known as the Lyapunov equation, well-studied in the control literature. \sa{$\Sigma_1$ satisfying \eqref{eq-Lyapunov} is unique,} 
if the real part of the eigenvalues of the matrix $C_0 H$ are all strictly less than 0, \mg{in which case the solution can be expressed as 
\begin{equation} {\displaystyle \Sigma_1 \triangleq - \int _{0}^{\infty }{e}^{\tau(C_0 H)}~Q~\mathrm {e} ^{\tau(C_0 H)^\top}\;d\tau },
\label{eq-integral}
\end{equation}}%
\mg{(see e.g \cite{zhou1996robust})}. \mg{In this case, the operator $\big(\bcred{(C_0 H)\otimes I_d + I_d\otimes(C_0 H)} \big)$ is invertible with an inverse $\Gamma \triangleq \big(\bcred{(C_0 H)\otimes I_d + I_d\otimes(C_0 H)}\big)^{-1}$, i.e., we have $\Sigma_1 = \Gamma Q$ where $\Gamma$ is the linear (integral) operator defined by \eqref{eq-integral}}. 
In the following lemma, we show that the real part of the eigenvalues of the \mg{operator} $\bcred{(C_0 H)\otimes I_d + I_d\otimes(C_0 H)}$ are indeed strictly less than zero. This result implies that the inverse $\Gamma$ exists.
}


\begin{lemma}\label{lem: lyapunov-op-invertible}
\mg{The matrix $C_0 H$ is stable, i.e., the real parts of the eigenvalues of the matrix $C_0H$ are all negative, where $H= \hes$ is the Hessian of $f$ at the saddle point $z_*$ and $C_0$ is the matrix defined in \eqref{def: C0}.} 
\end{lemma}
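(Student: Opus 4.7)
The plan is to exploit the saddle-point structure of $H \triangleq \nabla^{(2)} f_*$ by factoring $C_0$ in a way that makes the Hermitian part of the transformed matrix positive definite. First, I would write
\begin{equation*}
C_0 = -D^{-1} J, \qquad
D \triangleq \begin{bmatrix} \mu_x I_{d_x} & 0 \\ 0 & \mu_y I_{d_y} \end{bmatrix} \succ 0, \qquad
J \triangleq \begin{bmatrix} I_{d_x} & 0 \\ 0 & -I_{d_y} \end{bmatrix},
\end{equation*}
so that $C_0 H = -D^{-1}(JH)$. Hence it suffices to show that every eigenvalue of $D^{-1}(JH)$ has strictly positive real part.

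Because $D$ is symmetric positive definite with square root $D^{1/2}$, the matrix $D^{-1}(JH)$ is similar to
\begin{equation*}
S \triangleq D^{-1/2}(JH)\,D^{-1/2} = D^{1/2}\bigl(D^{-1}(JH)\bigr)D^{-1/2},
\end{equation*}
and therefore shares its spectrum. I would then invoke the standard fact that whenever the Hermitian part $\tfrac{1}{2}(S+S^*)$ of a square matrix $S$ is positive definite, every eigenvalue of $S$ has strictly positive real part: indeed, for any eigenpair $(\lambda,v)$ with $v\neq 0$, one has $2\,\mathrm{Re}(\lambda)\|v\|^2 = v^*(S+S^*)v > 0$. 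Thus the plan reduces to verifying $S+S^* \succ 0$.

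Since $H=H^\top$ and $J=J^\top$, we have $S+S^* = D^{-1/2}(JH + HJ)D^{-1/2}$. A direct block multiplication using $H_1 = \nabla^{(2)}_{xx} f_*$, $H_2 = \nabla^{(2)}_{xy} f_*$, $H_3 = \nabla^{(2)}_{yy} f_*$ gives the key identity
\begin{equation*}
JH + HJ = 2\begin{bmatrix} H_1 & 0 \\ 0 & -H_3 \end{bmatrix},
\end{equation*}
so the cross-coupling block $H_2$ drops out. By Assumption~\ref{assump: obj-fun-prop}, strong convexity in $x$ and strong concavity in $y$ yield $H_1 \succeq \mu_x I_{d_x} \succ 0$ and $-H_3 \succeq \mu_y I_{d_y} \succ 0$, so $JH+HJ \succ 0$. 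Conjugation by $D^{-1/2}$ preserves positive definiteness, hence $S+S^*\succ 0$. It follows that $D^{-1}(JH)$ has spectrum in the open right half-plane, and consequently $C_0 H = -D^{-1}(JH)$ is stable, completing the proof.

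The argument does not contain a genuine obstacle; the only step requiring any insight is the factorization $C_0 = -D^{-1}J$ combined with the block identity for $JH+HJ$, which is precisely what allows the indefinite saddle-point structure of $H$ to be converted into a bona fide positive-definiteness statement.
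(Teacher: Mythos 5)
Your proof is correct. It rests on the same underlying principle as the paper's argument -- bound the real parts of the eigenvalues by the Hermitian part of a matrix similar to $C_0H$, and observe that the cross-coupling block $H_2$ contributes only to the skew part and therefore drops out -- but the execution is genuinely different. The paper conjugates $C_0H$ by the \emph{complex} diagonal matrix $P=\operatorname{diag}(i\mu_x^{-1/2}I_{d_x},\,\mu_y^{-1/2}I_{d_y})$, obtaining a complex symmetric matrix $\tilde H=\tilde H_r+i\tilde H_c$ whose Hermitian part is the block-diagonal $\tilde H_r=\operatorname{diag}(-\mu_x^{-1}H_1,\,\mu_y^{-1}H_3)\prec 0$, and then evaluates $\mathrm{Re}(\lambda)=v^*\tilde H_r v$ on unit eigenvectors. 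You instead factor $C_0=-D^{-1}J$ with $D=\operatorname{diag}(\mu_x I,\mu_y I)\succ 0$ and $J=\operatorname{diag}(I,-I)$, pass to the real congruence $S=D^{-1/2}(JH)D^{-1/2}$, and use the identity $JH+HJ=2\operatorname{diag}(H_1,-H_3)\succ 0$ to conclude $S+S^*\succ 0$. The two routes buy essentially the same thing, but yours stays entirely in real arithmetic and makes the cancellation of $H_2$ an explicit one-line block computation rather than a consequence of the imaginary scaling, which is arguably cleaner; the paper's complex similarity has the minor advantage of exhibiting the similar matrix $\tilde H$ itself (not just its Hermitian part) in closed form. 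All the auxiliary steps you use -- invertibility of $D^{1/2}$, preservation of definiteness under congruence, and the standard eigenvalue inequality $2\,\mathrm{Re}(\lambda)\|v\|^2=v^*(S+S^*)v$ -- are valid, so there is no gap.
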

\begin{proof}
We introduce the matrix $P=\begin{bmatrix} 
\frac{i}{\sqrt{\mu_x}}I_{d_x} & 0_{d_x\times d_y} \\
0_{d_y\times d_x} & \frac{1}{\sqrt{\mu_y}}I_{d_y}
\end{bmatrix}$ where $i$ is the imaginary unit satisfying $i^2=-1$. Then, $C_0H$ is similar to the matrix $\tilde{H}$ defined as
$$\tilde{H}\bc{\triangleq}
P^{-1}(C_0H) P= 
\bcred{
\begin{bmatrix} 
\frac{-1}{\mu_x} H_1 & \frac{i}{\sqrt{\mu_x\mu_y}}H_2
\\
\frac{\sa{i}}{\sqrt{\mu_x\mu_y}}H_2^\top & \frac{1}{\mu_y}H_3
\end{bmatrix},\quad\text{where}\quad 
H=\begin{bmatrix} H_1 & H_2 \\
H_2^\top & H_3\end{bmatrix}
}
$$
\sa{such that $H_1=\nabla_{xx}^{(2)}f_*$, $H_2= \nabla_{xy}^{(2)}f_*$, $H_3= \nabla_{yy}^{(2)} f_*$.} Hence, $C_0H$ is diagonalizable if and only if $\tilde{H}$ is diagonalizable. \mg{Moreover, $C_0H$ and $\tilde{H}$ share the same eigenvalues as they are similar}. The matrix \sa{$\tilde{H}\in\mathbb{C}^{d\times d}$ can be decomposed into $\tilde H_r\in\reals^{d\times d}$ and $\tilde H_c\in\reals^{d\times d}$, i.e.,} real and imaginary parts, as follows: 
$$
\tilde{H}= \underbrace{\begin{bmatrix} 
\frac{-1}{\mu_x} H_1 & 0_{d_x\times d_y} \\ 
0_{d_y \times d_x} & \frac{1}{\mu_y}H_3
\end{bmatrix}}_{\tilde{H}_r}+ i\underbrace{ \begin{bmatrix} 
0_{d_x} & \frac{1}{\sqrt{\mu_x\mu_y}}H_2\\ 
\frac{1}{\sqrt{\mu_x\mu_y}}H_2^\top & 0_{d_y}
\end{bmatrix}}_{\tilde{H}_c}.
$$
Let $\lambda$ be the eigenvalue of $\tilde{H}$ with the corresponding right eigenvector $v$, i.e., $\tilde{H}v=\lambda v$; \sa{then its complex conjugate, $\bar{\lambda}$, i.e., $\lambda \bar{\lambda}=|\lambda|^2$,} is also an eigenvalue of the \mg{Hermitian transpose (conjugate transpose)} of $\tilde{H}$ with the \mg{left} eigenvector \sa{$v^*$}, i.e., \sa{$v^* \tilde{H}^{*}=\bar{\lambda} v^*$.} 
Therefore, we can write the following for any eigenvalue $\lambda$ of $\tilde{H}$ with the eigenvector $v$ such that \sa{$v^* v=1$}:
\begin{equation}
Re(\lambda)= \sa{\frac{1}{2}\left(\lambda v^* v + \bar{\lambda} v^* v\right)= \frac{1}{2}\left( v^* \tilde{H} v + v^* \tilde{H}^{*} v\right) = v^*\left(\frac{\tilde{H} + \tilde{H}^{*}}{2} \right)  v.} \label{eq-relambda}
\end{equation}
\sa{Since $\tilde{H}$ and $\tilde{H}^{*}$ are symmetric matrices, i.e., $\tilde{H}_{ij} = \tilde{H}_{ji}$ and $\tilde{H}^*_{ij} = \tilde{H}^*_{ji}$, we have $\tilde{H}^{*}=\tilde{H}_r-i\tilde{H}_c$. Therefore, \eqref{eq-relambda} implies that $Re(\lambda)=v^* H_r v$. Now, writing $v=v_r+iv_c$, we obtain
\begin{align}
v^* \tilde{H}_r v&= (v_r-iv_c)^\top \tilde{H}_r(v_r+iv_c)=v_r^\top \tilde{H}_r v_r+ v_c\tilde{H}_r v_c +i (v_r^\top \tilde{H}_r v_c - v_c^\top \tilde{H}_r v_r)= v_r^\top \tilde{H}_r v_r+ v_c\tilde{H}_r v_c,\label{eq-helper}
\end{align}
where we used the fact that $\tilde{H}_r\in\reals^{d\times d}$ is a symmetric matrix. Thus, we can conclude that} 
\begin{equation}
Re(\lambda)= v_r^\top\tilde{H}_rv_r \sa{+} v_c^\top \tilde{H}_r v_c.
\label{eq-re-lambda}
\end{equation}
By Assumption \ref{assump: obj-fun-prop}, \sa{$f(x,y)$} is strongly convex 
\sa{in $x$ for any fixed $y$, and strongly concave 
in $y$ for any fixed $x$; therefore,} we have $-H_1 \prec 0$ and $H_3 \prec 0$, \sa{which implies that $\bcred{\tilde{H}_r}\prec 0$}. We conclude from \eqref{eq-re-lambda} that $Re(\lambda)<0$ for any eigenvalue $\lambda$. 
\end{proof} 
Based on Lemma \ref{lem: lyapunov-op-invertible}, the integral in the representation \eqref{eq-integral} is finite, and the inverse $\Gamma=\bcred{\big((C_0 H)\otimes I_d + I_d\otimes(C_0 H)\big)}^{-1}$ exists where we can write
\begin{equation}\label{eq: Sigma1}
\bcred{\Sigma_1 = \sa{-}\left(\frac{\theta}{1-\theta}\Gamma \bar{W}+ \frac{\theta}{1-\theta} \Gamma E \right)+ \mathcal{O}\left((1-\theta)^2\right)}
\end{equation}
\bcred{\sa{Recall that $\bar{W}= T W T^\top$, where $T$ is defined in \eqref{def: barW} and $W=\mE[w_1^{\otimes 2}]$.} Clearly, $\bar{W}$ depends on $(1-\theta)$. \sa{In order to see this, 
first we decompose $T$ using}
\begin{align*}
T_1\triangleq\begin{bmatrix} 
-\frac{1}{\mu_x} I_{d_x} & 0_{d_x\times d_y} & 0_{d_x\times d_y}\\
0_{d_y\times d_x} & \frac{1}{\mu_y} I_{d_y} & 0_{d_y}
\end{bmatrix},\;\;\quad \;\;
T_2\triangleq
\begin{bmatrix} 
0_{d_x} & 0_{d_x\times d_y} & 0_{d_x\times d_y}\\
0_{d_y\times d_x} & \frac{1}{\mu_y} I_{d_y} & -\frac{1}{\mu_y}I_{d_y}
\end{bmatrix},
\end{align*}
so that
$$
\mg{T =} \begin{bmatrix}\sa{-}\tau I_{d_x} & 0_{d_x\times d_y} & 0_{d_x\times d_y}\\
0_{d_y\times d_x} & \sigma (1+\theta)I_{d_y} & -\theta \sigma I_{d_y}
\end{bmatrix}= \frac{1-\theta}{\theta }T_1 + (1-\theta) T_2.
$$
Then, we can decompose $\bar{W}$ using 
\sa{$\bar{W}= T W T^\top$ as follows:}
\begin{align*}
\bar{W}=\frac{(1-\theta)^2}{\theta^2}W_1 + \frac{(1-\theta)^2}{\theta}W_2 + (1-\theta)^2W_3,
\end{align*}
where $W_1 \triangleq T_1 W T_1^\top$, $W_2 \triangleq T_1 W T_2^\top + T_2 W T_1^\top$ and $W_3 \triangleq T_2 W T_2^\top$, \mg{and the matrices $W_1, W_2$ and $W_3$, \sa{by definition,} do not depend on $\theta$.}
}

Hence, the equation \eqref{eq: Sigma1} becomes:
\begin{align}
\Sigma_1= \sa{-\left(\frac{(1-\theta)}{\theta} \Gamma W_1 + (1-\theta) \Gamma W_2 + \theta(1-\theta)\Gamma W_3 + \frac{\theta}{1-\theta}\Gamma E\right)} + \mathcal{O}\left( (1-\theta)^2 \right).
\label{eq-Sigma1-expansion}
\end{align}
\bcred{Since $\frac{1-\theta}{\theta}=1-\theta+ \mathcal{O}((1-\theta)^2)$ as $\theta\to 1$, \sa{we can write} 
\begin{equation}\label{eq: Sigma_1_ext}
    \Sigma_1= -(1-\theta) \Gamma (W_1+W_2+W_3)  \mg{-}\frac{\theta}{1-\theta}\Gamma E +\mathcal{O}((1-\theta)^2).
\end{equation}
\mg{In the rest of the proof, we analyze the} 
order of $\Vert E\Vert_2$ with respect to $(1-\theta)$. Recall the definition of $E$ from \eqref{eq: Lyapunov-eq}, \sa{i.e.,}
\begin{align}
    E= BH\Delta_2^\top + \Delta_2HB^\top+\Delta_1,
    \label{def-E-recall}
\end{align}
where $\Delta_1$ and $\Delta_2$ are the submatrices of the matrix $\Delta$ defined in \eqref{def: matrix-Delta}, \sa{i.e.,} 
\begin{equation}
\Delta = L \mE[(\xi_1-\xi_*)e_2^\top]N^\top + N\mE[e_2(\xi_1-\xi_*)^\top]L^\top + N \mE[e_2e_2^\top]N^\top.
\label{def-Delta-matrix-2}
\end{equation}
Each matrix $\Delta_i$ is a submatrix of $\Delta$; therefore, their spectral norm is bounded by $\Vert \Delta\Vert_2$, i.e. $\Vert \Delta_i\Vert_2\leq \Vert \Delta\Vert_2$ for each $i\in\{1,2\}$ --see for example \cite{sing-val-submatrix}. Therefore, it is sufficient to bound $\Vert \Delta\Vert_2$ rather than each $\Delta_i$. \mg{We observe from \eqref{def-Delta-matrix-2} that this requires obtaining a bound on the error term $e_2$,} \sa{defined in \eqref{def: M0-e2-w1},}
which \mg{depends on} $\tilde{q}_1$, $r_2(z_0)$, $r_2(z_1)$, and $r_2(x_1,\tilde{y}_1)$. \mg{In particular}, since $\bcred{r_2(.)}$ is 
\sa{the remainder of the first-order Taylor expansion 
of $\grad f(z_i)$ around $z_*$,} we have $$\Vert r_2(z_i)\Vert = \mathcal{O}\left( \Vert z_i-z_*\Vert^2 \right),$$ for each $i\in\{0,1\}$. \sa{Similarly, we have $\Vert r_2(x_1,\tilde{y}_1)\Vert=\mathcal{O}\left(\Vert x_1-x_*\Vert^2 + \Vert \tilde{y}_1-y_*\Vert^2\right)$. Since $\tilde{y}_1=y_1+\sigma\tilde q_1$, we also get} 
\begin{align}\label{ineq: r_2-1}
\Vert r_2(x_1,\tilde{y}_1)\Vert^2
= \mathcal{O}\left((\Vert z_1-z_*\Vert^2 + \sigma^2 \Vert \tilde{q}_1\Vert^2)^2\right)= \mathcal{O}\left(\Vert z_1-z_*\Vert^4 + \sigma^4 \Vert\tilde{q}_1\Vert^4\right),
\end{align}
\mg{where we used the Cauchy-Schwarz inequality and the definition of $\tilde{y}_1$.} 
Consequently, we can bound $\Vert e_2\Vert^2$ using \eqref{ineq: r_2-1}:
\begin{align}
    \Vert \bcred{e_2} \Vert^2 &= \Vert \sigma H_2 \tilde{q}_1+ \Pi_1 \bcred{\bcred{r_2}}(x_1,\tilde{y}_1)\Vert^2+ \Vert \Pi_2 \bcred{\bcred{r_2}}(z_0)\Vert^2+ \Vert \Pi_2 \bcred{\bcred{r_2}}(z_1)\Vert^2 \nonumber\\
    &=\mathcal{O}\left(\sigma^2 \Vert \tilde{q}_1\Vert^2 + \Vert r_2(x_1,\tilde{y}_1)\Vert^2+\Vert z_0-z_*\Vert^4+\Vert z_1-z_*\Vert^4\right)\nonumber\\
    &=\mathcal{O}(\sigma^2 \Vert \tilde{q}_1\Vert^2+\sigma^4\Vert\tilde{q}_1\Vert^4+\Vert z_0-z_*\Vert^4+\Vert z_1-z_*\Vert^4). \label{ineq: e_2}
\end{align}
Notice that the inequality \eqref{eq:tqk-bound} \sa{directly implies bounds on $\mE[\Vert \tilde{q}_1\Vert^2]$ and $\mE[\Vert \tilde{q}_1\Vert^2]$ as given below:}
\begin{align*}
\mE[\Vert \tilde{q}_1\Vert^2]=\mathcal{O}\left((1+\theta)^2(\delta_{(2)}^2+\mE[\Vert z_1-z_*\Vert^2])\right), \; \quad \;\mE[\Vert \tilde{q}_1\Vert^4]=\mathcal{O}\left((1+\theta)^4(\delta_{(4)}^4+\mE[\Vert z_1-z_*\Vert^4])\right),
\end{align*}
\mg{(see the derivation of \eqref{eq:q3} for further details)}.
Therefore, by taking the expectation of both sides of equation \eqref{ineq: e_2}, \mg{and using the fact that $\sigma = \mathcal{O}(1-\theta)$}, we obtain the following relation 
\begin{eqnarray}\label{eq: bound-e2}
\mE[\Vert e_2 \Vert^2] 
&=\mathcal{O}\left( (1-\theta)^2 (\delta_{(2)}^2+\mE[\Vert z_1-z_*\Vert^2)+(1-\theta)^4(\delta_{(4)}^4+\mE[\Vert z_1-z_*\Vert^4])  +
\mg{\mE\Vert z_1-z_*\Vert^4} 
\right), \\ 
&=\mg{\mathcal{O}\left( (1-\theta)^2 \left(\delta_{(2)}^2+
\sa{(1-\theta)}
\right)+(1-\theta)^4 \left(\delta_{(4)}^4+(1-\theta)^2 \right) + \sa{(1-\theta)^2}
\label{eq: bound-e2-bis}
\right)}, 
\end{eqnarray}
as $\theta\rightarrow 1$ where \sa{we used \eqref{eq:moment2-bound} and \eqref{eq:moment4}} in the last equality. We are now ready to bound $\Vert\Delta \Vert_2$ which will in turn give us the the order of $\mE[\Vert E\Vert_2]$ with respect to $(1-\theta)$. 
\sa{Using the sub-multiplicative property of the spectral norm after computing $\norm{\Delta}_2$ from \eqref{def-Delta-matrix-2}, and then using Jensen's inequality 
to bound both $\Vert \mE[(\xi_1-\xi_*)e_2^\top]\Vert_2$ and $\Vert\mE[e_2e_2^\top]\Vert_2$,} 
we obtain 
\begin{align*}
    \Vert \Delta \Vert_2 &=\mathcal{O}\left(\Vert L \Vert_2\Vert N \Vert_2 \mE[\Vert (\xi_1-\xi_*)\bcred{e_2}^\top\Vert_2]+ \Vert N\Vert^2_2 \mE[\Vert \bcred{e_2} \bcred{e_2}^\top \Vert_2]\right)\\
    &=\mathcal{O}(\Vert L\Vert_2 \Vert N\Vert_2 \mE[\left(\Vert z_1-z_*\Vert\sa{+\Vert z_0-z_*\Vert}\right) \Vert \bcred{e_2} \Vert])+\mathcal{O}(\Vert N \Vert_2^2 \mE[\Vert \bcred{e_2}\Vert^2]),
\end{align*}
where \sa{we used the property: 
$\norm{ab^\top}_2=\norm{a}\norm{b}$ for arbitrary vectors $a$ and $b$.} \mg{From \eqref{eq: bound-e2-bis}}, we have also
$\mE[\Vert e_2\Vert]\leq \mE[\Vert e_2\Vert^2]^{1/2}=\mathcal{O}(1-\theta)$; \sa{furthermore, since both $z_1$ and $z_0$ distributed according to the stationary distribution, Lemma \ref{lem: finite-moment} implies that $\mE[\Vert z_i-z_*\Vert] \leq \mE[\Vert z_i-z_*\Vert^2]^{1/2}=\mathcal{O}\left( (1-\theta)^{1/2} \right)$ for $i=0,1$}. \mg{On the other hand, since $\tau = \mathcal{O}(1-\theta)$ and $\sigma = \mathcal{O}(1-\sigma)$, from the definition of $N$ given in \eqref{eq-M-and-N}, we also see that
$\Vert N \Vert_2=\mathcal{O}(1-\theta)$}. Combining all these bounds, 
we obtain
$$
\Vert \Delta \Vert_2= \mathcal{O}\left( (1-\theta)^{5/2}+ (1-\theta)^4\right)\sa{=\cO((1-\theta)^{5/2}).}
$$
\mg{Inserting this bound into \eqref{def-E-recall} and using the fact $\|\Delta_i\| \leq \|\Delta\|$ for $i=1,2$; we obtain
$E = \mathcal{O}\left( (1-\theta)^{5/2}
\right)$.
\sa{Then, we conclude from 
\eqref{eq: Sigma_1_ext} that}}
\begin{equation}\label{eq: Sigma_1-last}
\Sigma_1= -(1-\theta) \Gamma (W_1+W_2+W_3) \sa{+\mathcal{O}\left( (1-\theta)^{3/2} \right),}
\end{equation}
\mg{as $\theta \to 1$ where $W_i = \mathcal{O}(1)$ \sa{for $i=1,2,3$}.}
Recalling that $\Sigma_1= \mE[(z_1-z_*)^{\otimes 2}]$ by definition, inserting \eqref{eq: Sigma_1-last} into Lemma \ref{lem: char-mean-z} \mg{and using \eqref{eq-xi-vs-z}}, \mg{we conclude that \eqref{eq: inv-meas-mean} holds for the matrix $M=\frac{1}{2} \Gamma (W_1 + W_2 + W_3)$} where $M = \mathcal{O}(1)$ as $\theta\to 1$. This completes the proof. 
 }
\subsection{Proof of Theorem \ref{thm: av-seq-conv}}

\bcred{
Let $\phi: \mathbb{R}^{2d}\rightarrow \mathbb{R}$ be a measurable function with $\Vert \phi\Vert<\infty$, \sa{where $\norm{\cdot}$ denotes the weighted supremum norm defined in Theorem~\ref{thm: inv-meas-exists}}. We define the function $\psi^{(\theta)}: \mathbb{R}^{2d}\rightarrow \mathbb{R}$ such that for any initialization $\xi_0\in\mathbb{R}^{2d}$ of the Markov chain, we set }
\begin{equation}\label{def:psi^theta}
\psi^{(\theta)}(\mg{\xi_0})\sa{\triangleq} \sum_{k=0}^{\infty}(\mcR^{(\theta)}_k\bcred{\phi}(\mg{\xi_0})\bcred{-\pi_*^{\sa{(\theta)}}(\bcred{\phi}))}=\sum_{k=0}^{\infty}\left(\mE[\bcred{\phi}(\xi_k^{(\theta)})]-\mE[\bcred{\phi}(\xi^{(\theta)}_\infty)]\right).
\end{equation}



\bcred{Notice that the Theorem \ref{thm: inv-meas-exists} and the 
\sa{triangle inequality imply that there exists $C>0$ such that for all $k\geq 0$, we have}
\begin{align}
\label{eq:partial-bound}
 \Vert\sum_{i=0}^{k}\big(\mcR_i^{\theta}\phi-\pi_*^{(\theta)}(\phi)\big)\Vert \leq \sum_{i=0}^{k}\Vert \mcR_i^{(\theta)}\phi-\pi_*^{(\theta)}(\phi)\Vert\leq C \sum_{i=0}^{k}\left(\frac{2\theta}{1+\theta} \right)^{\mg{i}}\Vert \phi-\pi_*^{(\theta)}(\phi)\Vert
\end{align}
for any measurable function $\phi$ 
\sa{such that $\norm{\phi}<\infty$}. \mg{Recalling that $\frac{2\theta}{1+\theta}<1$,} this shows that the function $\xi \rightarrow \sum_{i=0}^{k}\big( \mcR_i^{(\theta)}\phi(\xi)-\pi_*^{(\theta)}\sa{(\phi)} \big)$ is uniformly bounded 
\sa{over} $k\in\mathbb{N}$.
}
Hence, the function $\psi^{(\theta)}$ is well-defined \mg{as a converging series 
with a} finite weighted supremum norm. 

\mg{We will adapt} the notation $\mcR=\mcR^{(\theta)}$ and $\bcred{\tilde{\phi}=\phi-\pi_*^{(\theta)}(\phi)}$ in the rest of the proof for simplicity. 
\sa{We next show} that $\sa{\psi=}\psi^{(\theta)}$ is the unique solution to \sa{the system:} $$(\rm{Id}-\mcR)\sa{\psi}= \tilde{\phi},\qquad \pi_*^{(\theta)}(\sa{\psi})=0,$$ where $\rm{Id}$ \sa{denotes} the identity operator, i.e., \sa{$\rm{Id}~\psi=\psi$ for any $\psi$}. \sa{Noticing that for any given $\xi_0$ and $\phi$, $\pi_*^{(\theta)}(\phi)$ is a constant and $\cR$ is a probability measure, we get $\cR \pi_*^{(\theta)}(\phi)= \pi_*^{(\theta)}(\phi)$; thus, it is immediate from the definition of $\psi^{(\theta)}$ in \eqref{def:psi^theta} that $(\rm{Id}-\mcR)\sa{\psi^{(\theta)}}= \tilde{\phi}$ holds.} \bcred{Let us 
\sa{next show that} $\pi_{*}^{(\theta)}(\psi^{(\theta)})=0$. 
\sa{Since we have} $(\rm{Id}- \mcR)\psi^{(\theta)}=\tilde{\phi}$, \sa{we also get} $(\mcR_{k-1}-\mcR_{k})\psi^{(\theta)}=\mcR_{k-1}\bcred{\tilde{\phi}}$. Hence we can write, $(\rm{Id}-\mcR_k)\psi^{(\theta)}=(\rm{Id}+\mcR_1+....+\mcR_{k-1})\bcred{\tilde{\phi}}$; therefore, in the limit $k\rightarrow \infty$, we obtain 
$$
\lim_{k\rightarrow \infty}(\psi^{(\theta)}-\mcR_k \psi^{(\theta)})= \lim_{k\rightarrow \infty} \sum_{i=0}^{k-1}\mcR_{i}\bcred{\tilde{\phi}}= \psi^{(\theta)},$$
which yields that $\lim_{k\rightarrow \infty }\mcR_k \psi^{(\theta)}=0$. On the other hand, since $\psi^{(\theta)}$ has finite \bcred{weighted} supremum norm, it follows from 
\sa{Theorem}~\ref{thm: inv-meas-exists} that $\pi_*^{(\theta)}(\psi^{(\theta)})=\lim_{k\rightarrow \infty}\mcR_k\psi^{(\theta)}=0$.
}

To see \bcred{uniqueness}, suppose there exists another function $\bar{\psi}^{(\theta)}$ such that $(Id-\mcR)\bar{\psi}^{(\theta)}=\bcred{\tilde{\phi}}$, then we can write $(\psi^{(\theta)}-\bar{\psi}^{(\theta)})= \mcR (\psi^{(\theta)}-\bar{\psi}^{(\theta)})$. Thus for all $k\in \mathbb{N}_{+}$ we obtain $(\psi^{(\theta)}-\bar{\psi}^{(\theta)})=\mcR_k (\psi^{(\theta)}-\bar{\psi}^{(\theta)})$. On the other hand, Theorem \ref{thm: inv-meas-exists} implies $(\psi^{(\theta)}-\bar{\psi}^{(\theta)})=\pi_*^{(\theta)}(\psi^{(\theta)}-\bar{\psi}^{(\theta)})=0$ which shows $\psi^{(\theta)}=\bar{\psi}^{(\theta)}$.

After presenting the properties of $\psi^{(\theta)}$, we are now ready to prove Theorem \ref{thm: av-seq-conv}. Let $\bcred{\phi}(\xi):\mathbb{R}^{2d}\rightarrow \mathbb{R}$ be a function with finite \mg{weighted} supremum norm, and define $Z_k\triangleq\frac{1}{k}\sum_{i=0}^{k-1}\bcred{\phi}(\xi_i^{(\theta)})$. \bcred{Recall that we have already shown $(\rm{Id}-\mcR_k)\psi^{(\theta)}=\sum_{i=0}^{k-1}\mcR_i\big(\bcred{\phi-\pi_*^{(\theta)}(\phi)\big)}$; hence, $Z_k$ can be written as }
\begin{align*}\label{def: Z_k}
    \mathbb{E}[Z_k]&=\bcred{\pi_*^{(\theta)}(\phi)}+ \frac{1}{k}\sum_{i=0}^{k-1}\mathbb{E}[\bcred{\big(\phi (\sa{\xi_i^{(\theta)}})-\pi_*^{(\theta)}(\phi)\big)}]\\
    &=\bcred{\pi_*^{(\theta)}(\phi)}+\frac{1}{k}\sum_{i=0}^{k-1}\bcred{\big(\mcR_i\bcred{\phi}(\xi_0)-\pi_*^{(\theta)}(\phi)\big)=\pi_*^{(\theta)}(\phi)+ \frac{\psi^{(\theta)}(\xi_0)-\mcR_k\psi^{(\theta)}(\xi_0)}{k}}.
\end{align*}
\sa{Recall that $\bar{\xi}^{(\theta)} = \mathbb{E}[\xi_\infty^{(\theta)}]$;
therefore,}
\mg{choosing the function $\phi$ as $\phi(\xi)=\xi-\bar{\xi}^{\theta}$, we have $\pi_*^{(\theta)}(\phi)=0$.} 
\sa{We have shown above that $(\rm{Id}-\mcR_k)\psi^{(\theta)}=(\rm{Id}+\mcR_1+....+\mcR_{k-1})(\phi-\pi_*^{(\theta)}(\phi))=\sum_{i=0}^{k-1}\big(\mcR_i^{\theta}\phi-\pi_*^{(\theta)}(\phi)\big)$; therefore, \eqref{eq:partial-bound} and $\pi_*^{(\theta)}(\phi)=0$ together imply that
\begin{equation}
    \frac{\psi^{(\theta)}(\xi_0)-\mcR_k\psi^{(\theta)}(\xi_0)}{k}\leq \frac{C}{k} \sum_{i=0}^{k-1}\left(\frac{2\theta}{1+\theta} \right)^{\mg{i}}\Vert \phi\Vert=\frac{1+\theta}{1-\theta} \left(1-\left(\frac{2\theta}{1+\theta}\right)^k\right)\frac{C}{k}~\norm{\phi}.
\end{equation}}%
\mg{
\sa{Since $\mathbb{E}[Z_k] = \mathbb{E}[\tilde{\xi}_k^{(\theta)}]-\bar{\xi}^{(\theta)}$; consequently, we immediately get the desired result in~\eqref{eq: gap-avg-mean-and-inv-meas-mean}.}}%
\newpage






\end{document}